\documentclass[12pt, letter paper]{amsart}
\usepackage{graphicx, amsmath, amssymb, amsthm, amsfonts, fullpage, latexsym,enumerate, xcolor, commath, tikz, tikz-cd, verbatim, hyperref,standalone, mathtools, booktabs}
\usepackage[capitalize]{cleveref}
\usepackage[shortlabels]{enumitem}
\usepackage[backend=biber,style=alphabetic]{biblatex}
\usepackage{thmtools}
\title{Graded Ehrhart Theory of Unimodular Zonotopes}

\author{Colin Crowley and Ethan Partida}

\theoremstyle{definition}
\newtheorem{defn}{Definition}[section]
\newtheorem{remark}[defn]{Remark}
\newtheorem{notation}[defn]{Notation}
\newtheorem{example}[defn]{Example}
\theoremstyle{plain}
\newtheorem{lemma}[defn]{Lemma}
\newtheorem{prop}[defn]{Proposition}
\newtheorem{corollary}[defn]{Corollary}

\newtheorem{thrm}[defn]{Theorem}

\newtheorem{question}[defn]{Question}
\newtheorem{problem}[defn]{Problem}

\newcommand{\R}{\mathbb{R}} 
\newcommand{\Z}{\mathbb{Z}}
\newcommand{\Q}{\mathbb{Q}} 
\newcommand{\C}{\mathbb{C}}
\newcommand{\N}{\mathbb{N}}

\renewcommand{\P}{\mathbb{P}}
\newcommand{\FF}{\mathbb{F}}
\renewcommand{\O}{\mathcal{O}}

\renewcommand{\H}{\mathcal{H}}
\DeclareRobustCommand{\qbinom}{\genfrac{\lbrack}{\rbrack}{0pt}{}}

\DeclareMathOperator\pr{\mathrm{pr}}

\DeclareMathOperator\Sym{\mathrm{Sym}}

\DeclareMathOperator\gr{\mathrm{gr}}
\DeclareMathOperator\Hilb{\mathrm{Hilb}}

\DeclareMathOperator\rowsupp{\mathrm{rowsupp}}
\DeclareMathOperator\colsupp{\mathrm{colsupp}}
\DeclareMathOperator\rowspace{\mathrm{rowspace}}
\DeclareMathOperator\ehr{\mathrm{ehr}}

\DeclareRobustCommand{\red}[1]{ {\begingroup\color{red}{#1}\endgroup} }

\definecolor{NordBlack}{HTML}{3B4252}   
\definecolor{NordWhite}{HTML}{E5E9F0}      
\definecolor{NordCyan}{HTML}{8FBCBB}  
\definecolor{NordBlue}{HTML}{81A1C1}

\begin{document}

\begin{abstract}
Graded Ehrhart theory is a new $q$-analogue of Ehrhart theory based on the orbit harmonics method. We study the graded Ehrhart theory of unimodular zonotopes from a matroid-theoretic perspective. Generalizing a result of Stanley (1991), we prove that the graded lattice point count of a unimodular zonotope is a q-evaluation of its Tutte polynomial. We conclude that the graded Ehrhart series of a unimodular zonotope is rational and obeys graded Ehrhart--Macdonald reciprocity. In an algebraic direction, we prove that the harmonic algebra of a unimodular zonotope is a coordinate ring of its associated arrangement Schubert variety. Using the geometry of arrangement Schubert varieties, we prove that the harmonic algebra of a unimodular zonotope is finitely generated and Cohen--Macaulay. We also give an explicit presentation of the harmonic algebra of a unimodular zonotope in terms of generators and relations. We conclude by classifying which unimodular zonotopes have Gorenstein harmonic algebras. Our work answers, in the special case of unimodular zonotopes, two conjectures of Reiner and Rhoades (2024).\end{abstract}

\maketitle

\section{Introduction}
In this paper, we study the combinatorial, algebraic and geometric aspects of the graded Ehrhart theory of unimodular zonotopes. Graded Ehrhart theory is a new $q$-analogue of Ehrhart theory introduced by Reiner and Rhoades \cite{RR24}. For every lattice polytope $P$ and positive integer $m$, there is a canonical pair of polynomials $i_P(m;q)$ and $\widetilde{i_P}(m;q)$ in $\Z[q]$ that have non-negative coefficients summing to the number of lattice points and interior lattice points in $mP$, respectively. These polynomials are constructed via the \emph{orbit harmonics} method.  The orbit harmonics method is a combinatorial degeneration that turns a finite set of points $\mathcal{Z}\subseteq k^n$ into a graded $k$-algebra $\text{Orb}(\mathcal{Z})$ whose total dimension is equal to $\vert \mathcal{Z}\vert$. When the finite set of points $\mathcal{Z}$ is combinatorially interesting, the dimensions of the graded pieces of $\text{Orb}(\mathcal{Z})$ often have an interesting combinatorial interpretation. For example, if $\mathcal{Z}$ is the set of vertices of the permutahedron, then the ring $\text{Orb}(\mathcal{Z})$ is the coinvariant ring of the symmetric group \cite[Section 4.1]{HRS}. The polynomials $i_P(m;q)$ and $\widetilde{i_P}(m;q)$ record the dimensions of the graded pieces of $\text{Orb}(mP\cap \Z^d)$ and $\text{Orb}(\text{int}(mP)\cap \Z^d)$, respectively.

A compelling feature of Ehrhart theory and, indeed, our present work, is its broad connections to combinatorics, commutative algebra and algebraic geometry. Classical Ehrhart theory is closely related to the enumerative combinatorics of integer-valued polynomials, the commutative algebra of semigroup rings, and the algebraic geometry of toric varieties. In our study of the graded Ehrhart theory of unimodular zonotopes, we draw from, and prove new facts about: the enumerative combinatorics of quantum integer-valued polynomials \cite{HH}, the commutative algebra of zonotopal and harmonic algebras \cite{HR,RR24}, and the algebraic geometry of arrangement Schubert varieties \cite{AB,liImagesRationalMaps2018}.  

The classical Ehrhart theory of unimodular zonotopes is studied in the works of Stanley, Beck--Jochemko--McCullough, and Backman--Baker--Yuen \cite{stanleyzonotope, BJM19, BBY19}. They show that, among many other things,  the Ehrhart theory of a unimodular zonotope $Z$ is dictated by its associated matroid $M$. Our results show that $M$ also dictates the graded Ehrhart theory of $Z$. 

We now summarize our results.  As our work draws on many different 
mathematical areas, we have separated our results into two categories: 
enumerative and algebraic. We delay our background until 
\Cref{sec:background} in order to highlight our results with a large 
worked example in \Cref{sec:example}. We expand upon the enumerative and 
algebraic aspects of our work in Sections \ref{sec:combinatorics} and 
\ref{sec:algebra}, respectively. We close by classifying which 
unimodular zonotopes have Gorenstein harmonic algebras in 
\Cref{Gorenstein-section}. \Cref{sec:questions} contains future questions.

\subsection{Enumerative Results}

Let $Z = A\cdot [0,1]^n \subseteq \R^d$ be a full-dimensional unimodular zonotope and $M$ be the matroid of $A$. See \Cref{subsec:matroids} and \Cref{subsec:zonotopes} for background on matroids and zonotopes, respectively.

Our first enumerative theorem expresses the graded lattice point counts $i_Z(m;q)$ and $\widetilde{i_Z}(m;q)$ of a unimodular zonotope as $q$-evaluations of the Tutte polynomial. This theorem, upon setting $q=1$, specializes to a result of Stanley \cite{stanleyzonotope} that expresses the ordinary lattice point counts $i_Z(m;1)$ and $\widetilde{i_Z}(m;1)$ as evaluations of the Tutte polynomial.

\begin{prop}\label{thrm:q-ehrhart} 
Let $m\geq 1$ be an integer, $Z=A\cdot [0,1]^n\subseteq \R^d$ be a unimodular zonotope and $M$ be the matroid of $A$. We have that,
  \begin{equation*}
    i_Z(m;q) =  q^{(n-d)m}[m]_q^d 
    T_M(\frac{[m+1]_q}{[m]_q},q^{-m})\,\,\,\text{and}\,\,\,
    \widetilde{i}_Z(m;q) = q^{(n-d)m}[m]_q^d 
    T_M(\frac{[m-1]_q}{[m]_q},q^{-m}),
\end{equation*}
where $T_M(x,y)$ is the Tutte polynomial of $M$ and $[k]_q := \frac{1-q^k}{1-q}$ is the $k$th $q$-integer.
\end{prop}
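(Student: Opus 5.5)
We follow Stanley's classical argument, upgraded to keep track of the grading. The first step is to tile $mZ$. We use the standard half-open decomposition of a zonotope into translates of half-open parallelepipeds, one for each independent set $I$ of $M$. Unimodularity makes each parallelepiped $\Pi_I=\sum_{i\in I}[0,1)a_i$ a lattice parallelepiped of normalized volume $1$. Scaling by $m$ gives
\[ mZ\cap\Z^d=\bigsqcup_{I \text{ independent}} \bigl(v_I+m\Pi_I\bigr)\cap \Z^d, \]
where each piece carries a closed/half-open pattern determined by internal and external activity with respect to a fixed generic ordering. After a unimodular change of coordinates, $m\Pi_I\cap\Z^d$ is a product $\{0,\dots,m-1\}^{|I|}$, possibly with some sides closed, giving $\{0,\dots,m\}$ factors.

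The second step computes the graded count of each piece. Orbit harmonics is not additive over disjoint unions in general, so we do not try to grade each piece separately and add. Instead we use the standard fact that $\mathrm{Orb}(\mathcal{Z})$ has Hilbert series recorded by a degree-compatible filtration. Concretely, $i_Z(m;q)=\sum_{p\in mZ\cap\Z^d} q^{\deg p}$ for a suitable degree statistic, namely the degree of the leading monomial of $p$ with respect to a Gröbner/standard-monomial basis (a "point-monomial" bijection). We would like to prove that $mZ\cap\Z^d$ admits a degree function such that:
\begin{itemize}
\item on a box $\{0,\dots,k\}^r$ it is the sum of the coordinates, giving $[k+1]_q^r$;
\item a translate $v_I+\cdots$ contributes an extra factor $q^{m\cdot(\text{number of directions translated})}$.
\end{itemize}
The intended inductive tool is deletion--contraction. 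For a generator $a_e$ that is neither a loop nor a coloop, $mZ=m Z_{M\setminus e}\cup (m Z_{M\setminus e}+ m[0,1]a_e)$. The plan is to show
\[ i_{Z_M}(m;q)=i_{Z_{M\setminus e}}(m;q)+q^{m}[m]_q\,i_{Z_{M/e}}(m;q)\cdot(\text{normalization}), \]
and then check that the right-hand side of the proposition satisfies the same recursion and the same base cases:
\begin{itemize}
\item a coloop multiplies by $[m+1]_q$, matching $x\mapsto [m+1]_q/[m]_q$ times $[m]_q$;
\item a loop multiplies by $q^m\cdot q^{-m}=1$, matching $y=q^{-m}$ together with the prefactor $q^{(n-d)m}$.
\end{itemize}
To get the recursion on the algebra side, we would use that $\mathrm{Orb}$ of a union of "layers" $\mathcal{Z}=\bigsqcup_{t=0}^{m}(\mathcal{Z}_t+t a_e)$ along a unimodular direction is governed by a filtration by the linear form dual to $a_e$ (a standard argument, as for the box $\{0,\dots,m\}$). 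This splits off layers and shifts degrees by $t$.

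The main obstacle is precisely this non-additivity. Orbit harmonics sees the ideal of the point set, so we must verify that the layered structure gives an honest exact sequence of graded modules, e.g.
\[ 0\to \mathrm{Orb}(\text{contracted part})(-m)\to \mathrm{Orb}(mZ)\to \mathrm{Orb}(\text{deleted part})\to 0, \]
rather than just an equality of cardinalities. The first approach we would try is to choose coordinates with $a_e=e_d$ and use a lexicographic Gröbner argument, checking that the standard monomials of $mZ\cap\Z^d$ are obtained fiberwise over the projection along $e_d$. Each fiber is an interval whose length varies, so counts of fibers of each length are given by contraction/deletion data. Once the statement about $i_Z$ is established, the interior version follows by the same recursion with closed sides replaced by open ones, i.e. $[m+1]_q$ replaced by $[m-1]_q$. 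Alternatively, it follows from a graded Ehrhart--Macdonald reciprocity together with the Tutte identity $T_M$ evaluated appropriately. Setting $q=1$ recovers Stanley's formula as a consistency check.
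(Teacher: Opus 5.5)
There is a genuine gap. The step you yourself call the main obstacle is never carried out. Worse, the recursion and exact sequence you propose for it are inconsistent with the formula you want to prove.

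\textbf{The recursion has the shift on the wrong term.} Write $F_M(m)$ for the right-hand side of the proposition. Since $M\setminus e$ has rank $d$ on $n-1$ elements and $M/e$ has rank $d-1$ on $n-1$ elements, the relation $T_M=T_{M\setminus e}+T_{M/e}$ becomes
\[
F_M(m)=q^{m}F_{M\setminus e}(m)+[m]_q\,F_{M/e}(m).
\]
So the shift by $q^m$ belongs on the \emph{deletion} term, and no normalization of the contraction term can repair your version.

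\textbf{A concrete counterexample.} Take the hexagon of Section~\ref{sec:example} with $e=3$ and $m=1$. There $i_Z(1;q)=1+2q+3q^2+q^3$. The deletion is the unit square, with series $1+2q+q^2$. The contraction is three collinear lattice points, with series $1+q+q^2$. Indeed $1+2q+3q^2+q^3=q(1+2q+q^2)+(1+q+q^2)$.

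Your sequence $0\to \mathrm{Orb}(\text{contracted part})(-m)\to \mathrm{Orb}(mZ)\to \mathrm{Orb}(mZ_{M\setminus e})\to 0$ would force $2q^2+q^3=i_Z(1;q)-(1+2q+q^2)$ to equal $q$ times the Hilbert series of an orbit harmonics ring. That is impossible, since such series have constant term $1$. So a fiberwise lex Gr\"obner argument aimed at that sequence cannot succeed. The splitting that actually exists is not induced by restricting point sets. On inverse systems it is multiplication by $l_e^m$, from $V_{mZ_{M\setminus e}}$ to $V_{mZ}$, which is why the deletion is the shifted piece.

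\textbf{The first step has the same missing mechanism.} The formula does expand as $\sum_B q^{m(n-d-b(B))}[m]_q^{d-a(B)}[m+1]_q^{a(B)}$, summing over bases $B$ with internal and external activities $a(B),b(B)$. This matches your heuristic numerically. But nothing in the proposal produces a degree function realizing it, and graded counts are not valuative.

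\textbf{The interior case as proposed is circular.} Deducing the interior formula from graded Ehrhart--Macdonald reciprocity does not work here, because in this setting reciprocity is derived \emph{from} this proposition.

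\textbf{The missing idea.} Never decompose the point set. Instead, identify $\mathrm{Orb}(mZ\cap\Z^d)$ with a power-ideal quotient whose Hilbert series is already known. The paper does this in \Cref{thrm:zon_orbit}, proving $\mathrm{Orb}(Z\cap\Z^d)\cong R^{\mathrm{ext}}_Z$:
\begin{itemize}
\item For a primitive cocircuit vector $v$, unimodularity gives $\max_Z v-\min_Z v=m(v)$ (\Cref{lem:facet_ineq}).
\item Hence the product of the $m(v)+1$ factors $v-c$, with $c$ running over the integers in $[\min_Z v,\max_Z v]$, vanishes on $Z\cap\Z^d$. Its top form $v^{m(v)+1}$ therefore lies in $\gr I(Z\cap\Z^d)$.
\item These powers generate $J^{\mathrm{ext}}_Z$, so $\mathrm{Orb}$ is a quotient of $R^{\mathrm{ext}}_Z$, and Stanley's count forces equality of dimensions.
\item The interior case is the same, using $v^{m(v)-1}$ and $R^{\mathrm{int}}_Z$.
\end{itemize}
Now apply this to $mZ$, which is the unimodular zonotope of the thickening $M(m)$. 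The Ardila--Postnikov/Holtz--Ron formula $\Hilb(R^{\mathrm{ext}}_{L(m)};q)=q^{mn-d}T_{M(m)}(1+q,q^{-1})$, combined with the thickening formula for $T_{M(m)}$, gives the proposition directly. All deletion--contraction is thereby absorbed into the theory of power ideals and their inverse systems, where the exact sequences genuinely exist.
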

Our proof of \Cref{thrm:q-ehrhart} relies on the theory of zonotopal algebras. We give a short introduction to zonotopal algebras in \Cref{subsec:zonotopes} and prove \Cref{thrm:q-ehrhart} in \Cref{subsec:ehrhart-poly}. 

We use \Cref{thrm:q-ehrhart} to prove that the graded lattice point counts $i_Z(m;q)$ and $\widetilde{i_Z}(m;q)$ are evaluations of \emph{quantum integer-valued polynomials}. A polynomial $f(t)\in \mathbb{Q}(q)[t]$ is quantum integer-valued if $f([m]_q)\in \mathbb{Z}[q,q^{-1}]$ for all $q$-integers $[m]_q$. Quantum integer-valued polynomials are a natural $q$-analogue of integer-valued polynomials and were systematically studied in the work of Harman and Hopkins \cite{HH}. See \Cref{subsec:quantum} for a short introduction to quantum integer-valued polynomials.

\begin{thrm}[\Cref{prop:ehrhart_poly}]\label{thrm:ehrhart_poly}
    Let $Z=A\cdot [0,1]^n\subseteq \R^d$ be a unimodular zonotope.  There is a quantum integer-valued polynomial $\ehr_Z(t;q)\in \Q(q)[t]$, which we call the graded Ehrhart polynomial, such that $\ehr_Z([m]_q;q) = i_Z(m;q)$. The polynomial $\ehr_Z(t;q)$ exhibits a $q$-analogue of Ehrhart--Macdonald reciprocity.
\end{thrm}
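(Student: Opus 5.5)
The plan is to start from \Cref{thrm:q-ehrhart} and rewrite its right-hand side as a polynomial in $t=[m]_q$ with coefficients in $\Q(q)$. The key identities are
\[
[m+1]_q = 1+q[m]_q,\qquad q^{-m} = \frac{1}{1-(1-q)[m]_q},\qquad q^{m} = 1-(1-q)[m]_q,
\]
so each quantity $q^{m}$, $[m+1]_q$ and $[m-1]_q=q^{-1}([m]_q-1)$ is a polynomial in $t=[m]_q$ over $\Z[q,q^{-1}]$.

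Write $T_M(x,y)=\sum_{i,j} t_{ij}x^iy^j$ with $i\le r=d$ and $j\le n-d$; here we use that $Z$ is full-dimensional, so $M$ has rank $d$. Then
\[
q^{(n-d)m}[m]^d T_M\Big(\tfrac{[m+1]}{[m]},q^{-m}\Big)=\sum_{i,j} t_{ij}\,[m+1]^i[m]^{d-i}\,q^{m(n-d-j)}.
\]
Every exponent appearing here is nonnegative. Substituting the identities above gives
\[
\ehr_Z(t;q):=\sum_{i,j}t_{ij}(1+qt)^i\,t^{d-i}\,\bigl(1-(1-q)t\bigr)^{n-d-j}\in\Z[q][t],
\]
and $\ehr_Z([m]_q;q)=i_Z(m;q)$ for all $m\ge1$. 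Quantum integer-valuedness is then immediate: the coefficients lie in $\Z[q]$, so evaluating at any $[m]_q$ lands in $\Z[q,q^{-1}]$. If the paper's definition requires evaluation at $q$-integers with $m\le 0$ as well, note that $[-m]_q=-q^{-m}[m]_q\in\Z[q^{\pm1}]$, so this also holds.

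For reciprocity, the natural $q$-analogue of $m\mapsto -m$ is $[m]_q\mapsto[-m]_q=-q^{-m}[m]_q$. I would therefore evaluate $\ehr_Z$ at $t=[-m]_q$ and compare with the formula for $\widetilde{i}_Z(m;q)$. Under $m\mapsto -m$ we have:
\begin{itemize}
\item $q^{m}\mapsto q^{-m}$;
\item $[m+1]_q\mapsto[-m+1]_q=-q^{-m+1}[m-1]_q$;
\item $[m]_q\mapsto -q^{-m}[m]_q$.
\end{itemize}
Plugging these into the $(i,j)$ term gives
\[
t_{ij}\,(-1)^d\,q^{-md+i}\,[m-1]^i[m]^{d-i}\,q^{-m(n-d-j)}.
\]
Comparing with $\widetilde{i}_Z$ then amounts to checking how $q^{-m(n-d-j)}$ relates to $q^{m(n-d)}q^{-mj}$. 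The term is
\[
q^{-m(n-d)}\cdot q^{mj}\cdot q^{-md}\cdot q^{i}\cdots,
\]
whereas $\widetilde{i}_Z$ has the term $q^{m(n-d)}q^{-mj}[m-1]^i[m]^{d-i}$. These differ by an $(i,j)$-dependent factor $q^{2mj+i-2m(n-d)-md}$, so a plain substitution does not give a clean identity.

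The fix I expect is to combine the substitution with the inversion $q\mapsto q^{-1}$, which is the standard feature of $q$-reciprocity, as in Harman--Hopkins. Since $[m]_{q^{-1}}=q^{1-m}[m]_q$ and $[-m]_{q^{-1}}=-q^{m}[m]_{q^{-1}}=-q[m]_q$, one should aim for an identity of the form
\[
\ehr_Z\bigl([-m]_{q^{-1}};q^{-1}\bigr)=(-1)^d\,q^{c(m)}\,\widetilde{i}_Z(m;q)
\]
for an explicit monomial $q^{c(m)}$. This is the main obstacle: identifying exactly which substitution makes every $(i,j)$ term transform uniformly. My first attempt is to do the exponent bookkeeping term by term, with $x$ scaled by $q$ and $y\mapsto y^{-1}$ handled jointly. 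If the exponents in $j$ fail to match, I would instead verify the identity using the formula for $\widetilde{i}_Z(m;q)$ in \Cref{thrm:q-ehrhart} combined with the symmetric role of $[m\pm1]_q/[m]_q$ under $q\mapsto q^{-1}$. Only after checking which monomial $q^{c(m)}$ works do I fix the precise statement.
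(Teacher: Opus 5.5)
Your construction of $\ehr_Z$ is correct and is the same polynomial as the paper's. Expanding $(1+(q-1)t)^{n-d}t^dT_M\bigl(\frac{qt+1}{t},\frac{1}{1+(q-1)t}\bigr)$ termwise gives exactly your $\sum_{i,j}t_{ij}(1+qt)^i t^{d-i}(1+(q-1)t)^{n-d-j}$. Your observation that it lies in $\Z[q][t]$ gives $\ehr_Z\in\mathcal{R}_q^+$ directly, which is more elementary than the paper's appeal to \cite[Lemma 4.2]{HH}. The case $m=0$ also holds: the $x^d$-coefficient of $T_M$ is $y^{\ell}$, with $\ell$ the number of loops, so $\ehr_Z(0;q)=1=i_Z(0;q)$.

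The genuine gap is the reciprocity half of the statement, which you leave both unproved and unstated. Your proposed shape is right, and the missing point is simple. The combined substitution $q\mapsto q^{-1}$, $t\mapsto -qt$ is the bar involution; at $t=[m]_q$ it is exactly your evaluation at $[-m]_{q^{-1}}=-q[m]_q$. This substitution \emph{fixes} $1+(q-1)t$, because $(q^{-1}-1)(-q)=q-1$. Hence the factor $q^{m(n-d-j)}=(1+(q-1)[m]_q)^{n-d-j}$ is untouched: both the $y$-argument $q^{-m}$ and the prefactor $q^{m(n-d)}$ survive unchanged. There is no $y\mapsto y^{-1}$, and the $j$-dependent mismatch you feared never arises.

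The remaining factors transform as $1+qt\mapsto 1-t$ and $t\mapsto -qt$. At $t=[m]_q$ one has $1-[m]_q=-q[m-1]_q$, so every $(i,j)$ term acquires the same factor $(-q)^d$:
\[
\overline{\ehr_Z}([m]_q;q)=\sum_{i,j}t_{ij}\,(-q[m-1]_q)^i(-q[m]_q)^{d-i}\,q^{m(n-d-j)}=(-q)^d\,q^{m(n-d)}[m]_q^d\,T_M\Bigl(\tfrac{[m-1]_q}{[m]_q},q^{-m}\Bigr)=(-q)^d\,\widetilde{i_Z}(m;q),
\]
where the last equality is \Cref{thrm:q-ehrhart}.

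So $c(m)=d$, and the precise reciprocity is $(-1)^dq^{-d}\ehr_Z([-m]_q;q^{-1})=\widetilde{i_Z}(m;q)$ for all $m\geq 1$. This is what the paper proves, by the same computation applied to the closed form of $\ehr_Z$: its $x$-argument goes to $\frac{q^{-1}(t-1)}{t}$ and its $y$-argument is fixed. Until you carry this out, your proposal establishes only the first half of the statement.
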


In \Cref{prop:q-reciprocity}, we prove that the generating function of any quantum integer-valued polynomial is a rational function in $\mathbb{Q}(q,t)$ which obeys a formal quantum reciprocity law. This result is analogous to the fact that integer-valued polynomials have rational generating functions that obey a formal reciprocity law \cite{stanleyRecip, crt} and may be of independent interest.

From \Cref{thrm:ehrhart_poly} and \Cref{prop:q-reciprocity}, we conclude the following theorem about the graded Ehrhart series of unimodular zonotopes.

\begin{thrm}[\Cref{thrm:ehrhart_series}]\label{thrm:ehrhart_series_intro} 
Conjecture 1.1 of \cite{RR24} holds for unimodular zonotopes. Namely, the graded Ehrhart series
        \[E_Z(t,q) = \sum_{m\geq 0} i_Z(m;q) t^m \quad \text{and} \quad \widetilde{E_Z}(t,q) = \sum_{m\geq1} \widetilde{i_Z}(m;q)t^m  \] 
        are rational functions in $\Q(q,t)$ with a particularly nice 
        form. The series $E_Z(t,q)$ and $\widetilde{E_Z}(t,q)$ exhibit a 
        $q$-analogue of Ehrhart--Macdonald reciprocity. Namely, 
        \[q^d \widetilde{E_Z}(t,q) = (-1)^{d+1}E_Z(t^{-1}, q^{-1}).\]
\end{thrm}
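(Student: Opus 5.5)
We plan to deduce the statement from \Cref{thrm:ehrhart_poly} together with the formal quantum reciprocity result \Cref{prop:q-reciprocity}. Write $f(t)=\ehr_Z(t;q)$ for the graded Ehrhart polynomial, so that $f([m]_q)=i_Z(m;q)$ for $m\ge 1$. We also have $f([0]_q)=f(0)=1=i_Z(0;q)$, since the dilate $0\cdot Z$ is a single point. We would check this either directly from the explicit form of $f$ coming out of \Cref{thrm:q-ehrhart}, or by taking it as part of \Cref{thrm:ehrhart_poly}. Then $E_Z(t,q)=\sum_{m\ge0} f([m]_q)t^m$ is the generating function of a quantum integer-valued polynomial. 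By \Cref{prop:q-reciprocity} it is rational in $\Q(q,t)$, with denominator of the form $\prod_{j=0}^{D}(1-q^jt)$ for $D=\deg_t f$. This is the "particularly nice form": the $q$-analogue of $(1-t)^{d+1}$, arising from expanding $f$ in the quantum binomial basis $\qbinom{t}{k}_q$ of Harman--Hopkins.

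Next we identify the interior count. From \Cref{thrm:q-ehrhart}, $\widetilde{i_Z}(m;q)$ is obtained from $i_Z(m;q)$ by replacing $[m+1]_q$ with $[m-1]_q$ inside the first Tutte argument. We aim to show that
\[
\widetilde{i_Z}(m;q) = (-1)^d q^{-d}\, \overline{f}([-m]_q),
\]
where the bar means $q\mapsto q^{-1}$ on coefficients and $[-m]_q=-q^{-m}[m]_q$ is the natural extension of $q$-integers. The check is a direct substitution. Under $q\mapsto q^{-1}$ and $m\mapsto -m$, the quantity $[m]_q$ becomes $[-m]_{q^{-1}}=-q^{m}[m]_{q^{-1}}=-[m]_q$ up to the appropriate power of $q$. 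The quantity $[m+1]_q$ becomes $[1-m]_{q^{-1}}$, which is, up to a power of $q$ and a sign, $[m-1]_q$. The argument $q^{-m}$ is fixed. Hence
\[
\frac{[m+1]_q}{[m]_q}\mapsto \frac{[m-1]_q}{[m]_q} \quad\text{(up to a power of } q\text{)}.
\]
This is exactly the statement of $q$-Ehrhart--Macdonald reciprocity for the polynomial promised in \Cref{thrm:ehrhart_poly}, so we may simply invoke it. If that result only gives the reciprocity in some equivalent normalization, the computation above pins down the factor $(-1)^dq^{-d}$. One must track the prefactor $q^{(n-d)m}[m]_q^d$ and the homogeneity of $T_M$, which has degree $d$ in $x$ on the relevant terms because of the rank. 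This bookkeeping of $q$-powers is where we expect most of the care to go.

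Finally we apply the formal reciprocity of \Cref{prop:q-reciprocity}. For a quantum integer-valued $f$ it states that, as rational functions,
\[
\sum_{m\ge0} f([m]_q)t^m = -\sum_{m\ge1} \overline{f}([-m]_q)\, t^{m}\Big|_{t\to t^{-1},\,q\to q^{-1}},
\]
in the appropriate sense, the $q$-analogue of Popoviciu/Stanley reciprocity $\sum_{m\ge0} f(m)t^m=-\sum_{m\ge1} f(-m)t^{-m}$. Substituting the identity from the previous step gives $E_Z(t^{-1},q^{-1}) = -(-1)^{d} q^{d}\widetilde{E_Z}(t,q)$, which rearranges to the claimed $q^d\widetilde{E_Z}(t,q)=(-1)^{d+1}E_Z(t^{-1},q^{-1})$.

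The main obstacle is matching conventions exactly: which bar and sign conventions \Cref{prop:q-reciprocity} uses, and whether the evaluation points are $[m]_q$ or $[m]_{q^{-1}}$. Errors there shift the final formula by powers of $q$. We would first verify the whole chain on the one-dimensional case $Z=[0,1]$. There $i_Z(m;q)=[m+1]_q$, $\widetilde{i_Z}(m;q)=q[m-1]_q$ (or a similar normalization), $E_Z=1/((1-t)(1-qt))$, and $\widetilde{E_Z}=qt^2/((1-t)(1-qt))$ up to normalization. Calibrating the signs and $q$-powers on this example, and then applying the general argument, should settle the remaining details.
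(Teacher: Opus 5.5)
Your proposal is correct and follows the paper's own argument: the paper likewise combines the polynomial-level reciprocity $(-1)^dq^{-d}\ehr_Z([-m]_q;q^{-1})=\widetilde{i_Z}(m;q)$ of \Cref{prop:ehrhart_poly} with the formal identity $\overline{E}(t,q)=-E(t^{-1},q^{-1})$ of \Cref{prop:q-reciprocity}, which gives $(-q)^d\widetilde{E_Z}(t,q)=-E_Z(t^{-1},q^{-1})$. Two bookkeeping corrections are needed. First, $q\mapsto q^{-1}$ must be applied \emph{after} evaluating at $[-m]_q$, so the effective evaluation point is $[-m]_{q^{-1}}=-q[m]_q$ (this is the bar involution $t\mapsto -qt$); your first formula instead conjugates only the coefficients and then plugs in $[-m]_q=-q^{-m}[m]_q$, although your subsequent substitution uses the right convention. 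Second, in your test case $Z=[0,1]$ one has $\widetilde{i_Z}(m;q)=[m-1]_q$, since an orbit harmonics Hilbert series has constant term $1$ when nonzero, and hence $\widetilde{E_Z}=t^2/((1-t)(1-qt))$. The factor $q^d$ in the reciprocity comes from the bar involution, not from the interior counts, so calibrating on your stated values would give the wrong normalization.
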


\subsection{Algebraic Results}
For any lattice polytope $P$, there is a bigraded $\C$-algebra 
$\mathcal{H}_P$ and a homogeneous ideal $\widetilde{\mathcal{H}_P}$ of 
$\mathcal{H}_P$ whose bigraded Hilbert series are the graded Ehrhart 
series $E_P(t,q)$ and $\widetilde{E_P}(t,q)$, respectively. The algebra 
$\mathcal{H}_P$ is called the \emph{harmonic algebra} of $P$ and the 
ideal $\widetilde{\mathcal{H}_P}$ is called the \emph{interior ideal} of 
$P$\footnote{The harmonic algebra was originally defined in \cite{RR24} 
as an $\R$-algebra $\H_P^\R$ and the interior ideal as an ideal 
$\widetilde{\H_P^\R} \subseteq \H_P^\R$. In order to uniformly state our 
theorems, we work with the complexifications $\H_P \coloneqq \H_P^\R 
\otimes_\R \C$ and $\widetilde{\H_P}\coloneqq 
\widetilde{\H_P^\R}\otimes_\R \C$. All of our algebraic theorems continue to hold 
for the case of real coefficients by \cite[Lemma 1.1]{stanleyHilb}. }. 
See \Cref{sec:gradedEhrhart} for the definitions.

Our main algebraic theorem gives an algebro-geometric interpretation of 
the harmonic algebra of a unimodular zonotope. If $Z=A\cdot 
[0,1]^n\subseteq \R^d$ is a unimodular zonotope and $L=\rowspace(A)$, the \emph{arrangement 
Schubert variety} $Y_L$ of $L$ is a complex $d$-dimensional subvariety 
of $(\P^1)^n$, defined as the closure of the rowspace of $A$. See 
\Cref{sec:matroid-schubert} for more details on arrangement Schubert varieties.

The arrangement Schubert variety $Y_L$ carries a natural 
$\mathbb{C}^*$-action which restricts to the inverse scaling action on the rowspace of $A$. This action induces a bigrading on the homogeneous coordinate ring of any $\mathbb{C}^*$-equivariant embedding of $Y_L$ into projective space. It is with this bigrading that we state the following theorem.

\begin{thrm}\label{thrm:geometry}
    Let $Z=A\cdot [0,1]^n\subseteq \R^d$ be a unimodular zonotope. The 
    harmonic algebra $\H_Z$ is isomorphic to the bigraded homogeneous coordinate ring of $Y_L$ under the Segre embedding $Y_L\subseteq \P^{2^n-1}$. 
\end{thrm}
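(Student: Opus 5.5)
The plan is to realize both sides as the same explicit subalgebra of $\C[y_1,\dots,y_d][t]$, with $t$ recording the ``dilation'' degree and the $y$-degree recording the $q$-grading. I will use the following form of the definition of the harmonic algebra in \Cref{sec:gradedEhrhart}. $\H_P=\bigoplus_{m\ge 0} V_m\, t^m$, where $V_m\subseteq \C[y]$ is the harmonic space of the point set $mP\cap\Z^d$, that is, the span of top-degree homogeneous components of polynomials vanishing there, taken as an apolar complement. Multiplication is ordinary polynomial multiplication, which is well defined because $V_m\cdot V_{m'}\subseteq V_{m+m'}$. I would first check that this is indeed the convention, possibly after an inverse/Fourier identification, and that the $q$-grading is polynomial degree.

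\emph{Step 1: identify $V_m$.} Write $\ell_1,\dots,\ell_n$ for the linear forms given by the columns of $A$. The zonotope $mZ$ is the zonotope of the matrix $A^{(m)}$, in which each column of $A$ is repeated $m$ times, and it is still unimodular. By the zonotopal-algebra input used in the proof of \Cref{thrm:q-ehrhart} (Holtz--Ron), the harmonic space of the lattice points of a unimodular zonotope is the external $\mathcal P$-space. Hence
\[ V_m=\mathcal P_+(A^{(m)})=\operatorname{span}\Big\{\textstyle\prod_i \ell_i^{c_i} : 0\le c_i\le m\Big\}. \]
Its dimension equals the number of independent sets of $M(A^{(m)})$, which equals $|mZ\cap\Z^d|$. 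Any exponent vector $(c_i)$ with $0\le c_i\le m$ can be written as a sum of $m$ indicator vectors of subsets $T_1,\dots,T_m\subseteq[n]$. Consequently $\H_Z$ is generated in degree $t^1$ by the elements $g_T=t\prod_{i\in T}\ell_i$, where $T$ ranges over all subsets of $[n]$.

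\emph{Step 2: the geometric side.} Give $(\P^1)^n$ coordinates $[x_i:z_i]$ and embed $L=\rowspace(A)$ by $v\mapsto\big([1:\ell_i(v)]\big)_i$; then $Y_L$ is the closure of the image. The Segre coordinates are $w_S=\prod_{i\notin S}x_i\prod_{i\in S}z_i$ for $S\subseteq[n]$. Because $L$ is dense in $Y_L$, the homogeneous ideal of $Y_L\subseteq\P^{2^n-1}$ is exactly the kernel of
\[ \C[w_S:S\subseteq[n]]\longrightarrow \C[y][t],\qquad w_S\mapsto t\prod_{i\in S}\ell_i, \]
where the extra variable $t$ records the homogenizing degree, i.e.\ the cone over the chart $x_i\neq0$. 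So the Segre coordinate ring is isomorphic to the subalgebra generated by the $g_S$, which by Step 1 is $\H_Z$. The $\C^*$-action inverse-scaling $L$ weights $w_S$ by $|S|$, matching the $y$-degree of $g_S$, so the isomorphism is bigraded. The sign convention for ``inverse'' should be checked against the $q$-grading.

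\emph{Main obstacle.} I expect the main difficulty to be Step 1: justifying rigorously that the $m$-th graded piece of the harmonic algebra, as defined by Reiner--Rhoades, is exactly $\mathcal P_+(A^{(m)})$, with the multiplication being honest polynomial multiplication. Two points need care. First, harmonic spaces versus their dual quotients $\gr$ of the point ideals. Second, the Holtz--Ron theorem for repeated columns, since $A^{(m)}$ has parallel elements. I would deduce both from the zonotopal-algebra dimension count already used for \Cref{thrm:q-ehrhart}: the containment $\mathcal P_+(A^{(m)})\subseteq V_m$ follows by exhibiting, for each product $\prod\ell_i^{c_i}$, that it is annihilated by the top forms of the point ideal, and equality then follows by comparing dimensions.
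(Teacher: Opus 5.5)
Your proposal is correct. Step 1 is exactly the paper's input. The proof of \Cref{section-ring-is-harmonic-algebra} identifies $(\H_Z)_{(m,*)}=V_{mZ}=(J^{\text{ext}}_{mZ})^\perp$ via \Cref{thrm:zon_orbit}. It then identifies this with the span of the squarefree products $\prod_{(i,j)\in S}l_{ij}$, $S\subseteq[m]\times[n]$, via \Cref{external-gens}. Restricted to $L(m)\cong L$, these are precisely your $\prod_i\ell_i^{c_i}$ with $0\le c_i\le m$.

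Where you genuinely differ is Step 2. The paper does not compare this span with the Segre coordinate ring directly. Instead it:
\begin{enumerate}
\item rewrites the span as the image of $H^0((\P^1)^{mn},\O(1,\ldots,1))$ in $H^0(Y_{L(m)},\O(1,\ldots,1))$;
\item uses Brion's theorem (\Cref{multiplicity-free-thm}) to replace this image by all of $H^0$;
\item passes to $H^0(Y_L,\O(m,\ldots,m))$ by \Cref{dilation-lemma};
\item invokes projective normality again in \Cref{Y-ring-and-module} to identify the resulting section ring with the Segre coordinate ring.
\end{enumerate}
Your observation is that, for this particular statement, the two uses of projective normality cancel. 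By density of $L$ in $Y_L$, the degree-$m$ piece of the Segre coordinate ring is the image of $\C[w_S]_m$ in $\Sym L^\vee$, which is that same span. So the theorem needs no geometric input beyond the definition of $Y_L$. As a bonus, you get generation in $t$-degree one, and hence finite generation, for free.

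What the paper's detour buys is the stronger identification $\H_Z=\bigoplus_m H^0(Y_L,\O(m,\ldots,m))\,x_0^m$ with the full section ring. This is what feeds the Cohen--Macaulay and canonical-module statements of \Cref{thrm:CM-intro} and the geometric description of $\widetilde{\H_Z}$. Your route alone would not give those.

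One correction to your ``main obstacle'' paragraph. The containment you propose to prove directly, $\mathcal{P}_+(A^{(m)})\subseteq V_m$, is the hard direction. Checking that each product is annihilated by all top forms of the point ideal presupposes that you already know $\gr I(mZ\cap\Z^d)$. The easy direction is the reverse. The facet polynomials of \Cref{lem:facet_ineq} give $J^{\text{ext}}_{mZ}\subseteq\gr I(mZ\cap\Z^d)$, hence $V_m\subseteq (J^{\text{ext}}_{mZ})^\perp=\mathcal{P}_+(A^{(m)})$, where the last equality is the cited Holtz--Ron/Ardila--Postnikov duality. Equality then follows by comparing dimensions, exactly as in \Cref{thrm:zon_orbit}.
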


 This geometric perspective allows us to verify all conjectures that \cite{RRT23} make about $\H_Z$.

\begin{thrm}\label{thrm:CM-intro}
Conjecture 5.5 of \cite{RR24} holds for unimodular zonotopes. 
Namely, the harmonic algebra $\H_Z$ is a finitely generated, Cohen--Macaulay $\C$-algebra. After a shift by $q$-degree $d$, the canonical module $\Omega \H_Z$ is isomorphic to the interior ideal $\widetilde{\H_Z}$. 
\end{thrm}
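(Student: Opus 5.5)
The plan is to deduce everything from \Cref{thrm:geometry}, which identifies $\H_Z$ with the homogeneous coordinate ring $R=\bigoplus_{m\ge0} H^0(Y_L,\O(m))$ of $Y_L$ under the Segre embedding. Here $\O(1)$ is the restriction of $\O(1,\dots,1)$ on $(\P^1)^n$. Finite generation follows at once: $R$ is a quotient of the polynomial ring on the $2^n$ Segre coordinates, provided the coordinate ring is projectively normal, i.e.\ agrees with the section ring. I will need to check which of the two readings \Cref{thrm:geometry} provides. If it gives the section ring, then finite generation comes from ampleness of $\O(1)$, since the section ring of an ample bundle on a projective variety is finitely generated.

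For Cohen--Macaulayness I would use the known geometry of arrangement Schubert varieties. By \cite{AB}, $Y_L$ is normal and Cohen--Macaulay; in fact it has rational singularities, since it admits a resolution by a wonderful-type compactification, or one can argue through its affine paving and Frobenius splitting. The standard criterion for the section ring of an ample line bundle $\mathcal{L}$ on a projective Cohen--Macaulay variety $Y$ says that $R$ is Cohen--Macaulay if and only if $H^i(Y,\mathcal{L}^{m})=0$ for all $0<i<\dim Y$ and all $m\in\Z$. For $m\ge0$ this vanishing should follow from the fact that $\O(1,\dots,1)$ is globally generated and $Y_L$ has rational singularities, using Kawamata--Viehweg on a resolution, or more directly from a Frobenius splitting compatible with the boundary. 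For $m<0$, Serre duality converts it into vanishing of $H^{i}(Y,\omega_Y\otimes\mathcal{L}^{|m|})$ for $i>0$, which is Grauert--Riemenschneider pushed down using rational singularities. I expect this cohomology vanishing to be the main obstacle. The first thing I would try is to quote the vanishing results in \cite{AB} for line bundles on $Y_L$ that are pulled back from $(\P^1)^n$, or to run the analogous Kempf-style argument. A fallback is to prove that $\H_Z$ is Cohen--Macaulay algebraically by exhibiting a Gröbner degeneration to a shellable Stanley--Reisner ring coming from the broken circuit complex.

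For the canonical module, a Cohen--Macaulay section ring satisfies $\Omega_R\cong\bigoplus_{m\in\Z}H^0(Y,\omega_Y\otimes\mathcal{L}^m)$. I would describe $\omega_{Y_L}$ explicitly. Since $Y_L$ is Cohen--Macaulay and normal, with the torus-like open orbit $L\cong\C^d$ whose complement is the boundary divisor $\partial=\sum_E D_E$ (indexed by flats of corank one), the canonical sheaf should be $\omega_{Y_L}\cong\O(-\partial)$ twisted by a character of the $\C^*$-action. Given this, $H^0(Y_L,\omega\otimes\O(m))$ consists of the degree-$m$ sections vanishing on the boundary. Under the identification in \Cref{thrm:geometry}, these correspond to functions supported on interior lattice points of $mZ$, that is, to $\widetilde{\H_Z}$ in degree $m$, and the $\C^*$-twist accounts for the shift by $q$-degree $d$. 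As a consistency check, I would compare bigraded Hilbert series. The reciprocity $q^d\widetilde{E_Z}(t,q)=(-1)^{d+1}E_Z(t^{-1},q^{-1})$ of \Cref{thrm:ehrhart_series_intro} is exactly the Stanley formula relating the Hilbert series of $\Omega_R$ to that of $R$. Given a natural injective map $\widetilde{\H_Z}\to\Omega\H_Z$, equality of Hilbert series then upgrades it to an isomorphism, which lets me avoid identifying the boundary divisor precisely.
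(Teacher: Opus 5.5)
\textbf{Canonical module.} This is where your proposal has a genuine gap. Your claim that $\omega_{Y_L}\cong\O(-\partial)$ up to a $\C^*$-character is false; a character changes only the grading, not the line bundle. $Y_L$ compactifies the vector group $L$, not a torus, and the volume form $dl_1\wedge\cdots\wedge dl_d$ has a pole of order $2$ along every boundary divisor, exactly as $dx$ does at $\infty\in\P^1$.

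Already for $M$ Boolean, $Y_L=(\P^1)^n$ and $\partial$ has class $(1,\ldots,1)$. So ``degree-$m$ sections vanishing on the boundary'' is $H^0(\O(m-1,\ldots,m-1))$, of dimension $m^n$, whereas $\widetilde{i_Z}(m;1)=(m-1)^n$. In general, take the divisor $D_H$ attached to a primitive cocircuit vector $v$. Near $D_H$, a section of $\O(m,\ldots,m)$ is a polynomial of degree at most $m\,m(v)$ on lines parallel to $v$, and vanishing on $D_H$ only lowers this to $m\,m(v)-1$. By contrast, $\widetilde{V_{mZ}}=(J^{\text{int}}_{mZ})^\perp$ is cut out by degree at most $m\,m(v)-2$. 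This description of $\widetilde{V_{mZ}}$ needs \Cref{thrm:zon_orbit}; $\widetilde{\H_Z}$ is not a space of functions on interior lattice points.

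Your fallback does not repair this. Equal Hilbert series do not give a module isomorphism, and the injection $\widetilde{\H_Z}[0,d]\hookrightarrow\Omega\H_Z$ you take as given is the whole content. Producing it means showing $f\,dl_1\wedge\cdots\wedge dl_d$ is regular on $Y_L$ for every $f\in\widetilde{V_{mZ}}$, which requires precisely the pole orders you wanted to avoid.

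The paper gets this from \Cref{internal-gens} (from \cite{CP}), namely $\det(L)\otimes H^0(Y_L,\O(1,\ldots,1)\otimes\omega_{Y_L})=J_{L,-1}^\perp$. It applies this to $L(m)$ through \Cref{dilation-lemma} and combines it with \Cref{thrm:zon_orbit} for $mZ$; the weight $-d$ of $\det L$ gives the $q$-shift. Your divisorial idea can be turned into a proof of that input. Since $Y_L$ is normal and $\omega_{Y_L}$ is reflexive, regularity need only be checked at the generic point of each $D_H$. There, using the coordinate $w=1/s$ on $u+sv$, the condition becomes $v^{m\,m(v)-1}(D)f=0$. By \Cref{lem:fin_gen_set} for $L(m)$, this is exactly membership in $(J^{\text{int}}_{mZ})^\perp$ — but the required vanishing order is $2$, not $1$.

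\textbf{Cohen--Macaulayness.} Finite generation is fine given \Cref{thrm:geometry}, and the local-cohomology criterion is the right framework, but you never establish the vanishing it requires. For $m>0$, Kawamata--Viehweg kills the higher cohomology of $\omega_{\tilde Y}\otimes N$ for $N$ nef and big, not of $N$ itself. So global generation plus rational singularities does not give $H^i(Y_L,\O(m,\ldots,m))=0$. Moreover, rational singularities of $Y_L$ do not follow merely from the existence of a resolution.

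The missing idea is the one the paper uses. The multidegree of $Y_L$ inside $(\P^1)^n$, the full flag variety of $SL_2(\C)^n$, is multiplicity-free \cite{AB,liImagesRationalMaps2018}. Brion's theorem \cite{Brion} then yields surjectivity of restriction, projective normality and arithmetic Cohen--Macaulayness for $\O(1,\ldots,1)$ (\Cref{multiplicity-free-thm}). Hence the section ring is the Segre coordinate ring, it is Cohen--Macaulay, and its canonical module is $\bigoplus_m H^0(Y_L,\O(m,\ldots,m)\otimes\omega_{Y_L})$ (\Cref{Y-ring-and-module}). The same surjectivity underlies \Cref{thrm:geometry}: it is what turns the square-free spanning set of \Cref{external-gens} into all of $H^0(Y_{L(m)},\O(1,\ldots,1))$.
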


Our proof of \Cref{thrm:CM-intro} ultimately rests on \cite[Theorem 1]{Brion}, and the fact that $Y_L \subseteq (\P^1)^n$ has multidegree which is multiplicity-free. In recent work, Cavey \cite{Cavey} has disproven \cite[Conjecture 5.5]{RR24} for general lattice polytopes. Using a toric geometric interpretation of harmonic algebras, he shows that $\mathcal{H}_P$ can fail to be finitely generated for $P$ a lattice triangle. In light of Cavey's result, it is quite surprising that the harmonic algebras of unimodular zonotopes are so well behaved.

We also give an explicit presentation of the vanishing ideal of $Y_L\subseteq \P^{2^n-1}$ and, in turn, a presentation of the harmonic algebra of $Z$.

\begin{thrm}[\Cref{prop:harmonic-pres}]\label{thrm:pres-intro}
For any linear subspace $L \subseteq \C^n$, the homogeneous coordinate ring of $Y_L \subseteq \P^{2^n-1}$ has an explicit combinatorial presentation
    \[R_L \simeq \frac{\C[z_S : S \subseteq \{1,\ldots, n\}]}{I_L^\text{SE}} \]
    where $\deg(z_S)= (1,|S|)$ and $I_L^\text{SE}$ is a homogeneous ideal constructed from the circuits of $M$.
\end{thrm}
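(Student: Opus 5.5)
We need to present the homogeneous coordinate ring of $Y_L\subseteq \P^{2^n-1}$, where $Y_L$ is the closure of $L$ in $(\P^1)^n$ and the Segre embedding sends $([x_1:y_1],\dots,[x_n:y_n])$ to the point with coordinates $z_S=\prod_{i\in S}x_i\prod_{i\notin S}y_i$. With this convention the affine chart $y_i\neq 0$ identifies $L$ with the locus where $x_i/y_i$ equals the $i$th coordinate, and $\deg z_S=(1,|S|)$ records the $\C^*$-weight.

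\textbf{Strategy.} We first find the multihomogeneous vanishing ideal of $Y_L$ in the Cox ring $\C[x_1,y_1,\dots,x_n,y_n]$. We then pull it back along the Segre map and add the Segre binomials.

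\textbf{Step 1 (the multihomogeneous ideal).} For each circuit $C$ of $M$ there is, up to scaling, a unique linear form $\sum_{i\in C}c_i t_i$ vanishing on $L$. Its multihomogenization is
\[
f_C=\sum_{i\in C}c_i\,x_i\prod_{j\in C\setminus i}y_j .
\]
By the theorem of Ardila--Boocher \cite{AB}, the ideal $I(Y_L)$ is generated by the $f_C$, and these form a universal Gröbner basis. We may assume this result, since the paper cites \cite{AB}.

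\textbf{Step 2 (pulling back to Segre coordinates).} The homogeneous coordinate ring $R_L$ is the subring $\bigoplus_k \big(\C[x,y]/I(Y_L)\big)_{(k,\dots,k)}$ of diagonal multidegrees. Let $\phi:\C[z_S]\to\C[x,y]$ be the Segre map. Then $\ker(\C[z_S]\to R_L)=\phi^{-1}(I(Y_L))$. This kernel contains the following two families.

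\begin{enumerate}[(a)]
\item The Segre ideal, generated by the binomials $z_Sz_T-z_{S\cap T}z_{S\cup T}$ and their multi-exchange generalizations. It is the kernel of $\phi$ itself.
\item The linear forms obtained from each circuit $C$ and each $U\subseteq [n]\setminus C$:
\[
g_{C,U}=\sum_{i\in C}c_i\,z_{U\cup\{i\}} .
\]
These satisfy $\phi(g_{C,U})=f_C\cdot\prod_{j\in U}x_j\prod_{j\notin C\cup U}y_j$, so they pad $f_C$ to multidegree $(1,\dots,1)$.
\end{enumerate}

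We set $I^{\mathrm{SE}}_L$ to be the ideal generated by these two families. It is homogeneous, with $g_{C,U}$ of degree $(1,|U|+1)$.

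\textbf{Step 3 (equality with the kernel).} The containment $I^{\mathrm{SE}}_L\subseteq\ker$ is clear. For the reverse, take an element $F$ of the kernel with $\phi(F)$ of multidegree $(k,\dots,k)$. Then $\phi(F)=\sum_C h_C f_C$, where each $h_C$ is a multihomogeneous polynomial of multidegree $(k,\dots,k)-\deg f_C$.

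Each monomial of $h_C$ can be written as $\prod_{j\in U}x_j\prod_{j\notin C\cup U}y_j$ times a monomial of multidegree $(k-1,\dots,k-1)$. The latter is the image of a degree-$(k-1)$ monomial in the $z_S$ by surjectivity of $\phi$ in each diagonal degree. Hence $h_Cf_C$ lies in $\phi\big(\C[z]_{k-1}\cdot g_{C,U}\big)$ summed over $U$.

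So $F-\sum(\text{multiples of }g_{C,U})$ lies in $\ker\phi$, which is the Segre ideal. This gives $\ker\subseteq I^{\mathrm{SE}}_L$.

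\textbf{Main obstacle.} The subtle point is the factorization in Step 3: a monomial of multidegree $(k,\dots,k)-\deg f_C$ must split as a padding factor times a diagonal monomial. This holds because, for $j\in C$, the exponent deficit of $f_C$ is exactly one in total across $x_j$ and $y_j$. For $j\notin C$, we choose $x_j$ or $y_j$ according to which variable has positive exponent, so that $U$ is well defined.

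Finally, combining the presentation with \Cref{thrm:geometry} yields the stated presentation of $\H_Z$.
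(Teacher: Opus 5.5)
Your argument is correct, and it takes a genuinely different route from the paper.

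\textbf{Your route.} You identify the kernel directly. Since $\widetilde{I_L}$ is generated by the $\widetilde{f_C}$, each of multidegree $1_C \le (1,\ldots,1)$, every element of $\widetilde{I_L}$ of diagonal multidegree $(k,\ldots,k)$ can be written as a combination of terms $\Phi(\mu)\,\Phi(g_{C,U})$. Your padding argument for this is exactly right: for $j\notin C$ one of $x_j,y_j$ has positive exponent, and the remaining monomial of multidegree $(k-1,\ldots,k-1)$ lies in the image of $\Phi$. This is the standard fact that the Segre image of a multiprojective variety whose ideal is generated in multidegrees at most $(1,\ldots,1)$ is cut out by the Segre quadrics together with the padded generators.

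\textbf{The paper's route.} The paper uses only the easy containment $I_L^{\text{SE}}\subseteq\ker\varphi_L$, which needs only that the circuit forms vanish on $Y_L$. It closes the gap by comparing Hilbert functions. The lower bound $m^dT_M(\frac{m+1}{m},1)$ on the coordinate ring comes from Brion's projective normality, the zonotopal Hilbert series and the thickening formula. The upper bound on $\dim(R_L)_m$ comes from the surjection $\gamma\colon (R_{L(m)})_1\to (R_L)_m$ of \Cref{lem:reduce-to-linear}, combined with the deletion--contraction count of \Cref{lem:dim1-case}.

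\textbf{Comparison.} Your route is much shorter. It needs neither Brion's theorem nor any Tutte or zonotopal input to obtain the presentation, and it applies verbatim to any multihomogeneous ideal generated in such degrees. Its cost is that it rests entirely on the ideal-theoretic half of Ardila--Boocher: that the circuit forms generate the prime ideal of $Y_L$, not merely cut it out. The paper's equality argument does not need that generation statement. As byproducts it also gives $\dim(R_L)_1=T_M(2,1)$ and the deletion--contraction exact sequences.

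\textbf{A small tidy-up.} To land exactly on the paper's $I_L^{\text{SE}}$, drop the hedge about ``multi-exchange generalizations.'' The ideal $\ker\Phi$ is generated by the binomials $z_Sz_T-z_{S\cup T}z_{S\cap T}$ alone. This is Hibi's theorem for the Boolean lattice. It also follows directly from \Cref{lem:multiset-relations}, since $\ker\Phi$ is spanned by binomials $\prod_i z_{S_{i,*}}-\prod_i z_{T_{i,*}}$ whose index sets have equal column counts.
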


Combining \Cref{thrm:geometry} and \Cref{thrm:pres-intro}, we obtain a presentation of the harmonic algebra of a unimodular zonotope.

\begin{corollary}
\label{cor:harmonic_pres}
The harmonic algebra of a unimodular zonotope $Z$ has an explicit combinatorial presentation, as given in \Cref{thrm:pres-intro}.
\end{corollary}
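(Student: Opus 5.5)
The corollary follows by combining \Cref{thrm:geometry} with \Cref{thrm:pres-intro}, provided the two bigradings are checked to agree. Let $Z = A\cdot[0,1]^n$ be unimodular and set $L=\rowspace(A)\subseteq \C^n$ (complexified). \Cref{thrm:geometry} gives a bigraded isomorphism $\H_Z \cong R_L$, where $R_L$ is the bigraded homogeneous coordinate ring of $Y_L\subseteq \P^{2^n-1}$ under the Segre embedding. Since \Cref{thrm:pres-intro} holds for every linear subspace $L\subseteq\C^n$, it applies in particular to this $L$. Composing the two isomorphisms gives
\[
\H_Z \;\simeq\; \frac{\C[z_S : S\subseteq\{1,\ldots,n\}]}{I_L^{\mathrm{SE}}},
\]
and the ideal $I_L^{\mathrm{SE}}$ is built from the circuits of the matroid $M$ of $A$.

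The only step needing real care is that the bigrading used in \Cref{thrm:geometry} is the same one used in \Cref{thrm:pres-intro}. I would check this on the Segre coordinates as follows.

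First, write the coordinates of the $i$-th factor $\P^1$ as $[x_i:y_i]$. The Segre coordinate $z_S$ is then the product of $x_i$ over $i\in S$ and $y_i$ over $i\notin S$, or the reverse, depending on convention.

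Second, the first grading is the standard degree of the Segre embedding. Every $z_S$ is linear in the homogeneous coordinates of $\P^{2^n-1}$, so it has first degree $1$.

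Third, the second grading comes from the $\C^*$-action that restricts to inverse scaling on $L$. I would fix the convention so that this action scales $z_S$ by $\lambda^{|S|}$, which gives $z_S$ second degree $|S|$. This matches $\deg z_S=(1,|S|)$ in \Cref{thrm:pres-intro}.

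Because $I_L^{\mathrm{SE}}$ is homogeneous for this bigrading, the quotient inherits it, and the isomorphism is then one of bigraded algebras. If the intended convention instead produced $n-|S|$ in the second grading, I would apply the involution $S\mapsto S^c$, which interchanges the two coordinates on each $\P^1$, and this flips the $q$-grading back. The main obstacle, therefore, is only this bookkeeping of grading conventions; all the substantive content is contained in the two theorems being combined.
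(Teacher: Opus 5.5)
Your proposal is correct and is essentially the paper's own argument: the corollary is the composition of the bigraded identification $\H_Z \cong R(Y_L,\O(1,\ldots,1))$ from \Cref{section-ring-is-harmonic-algebra} (that is, \Cref{thrm:geometry}) with $\ker\varphi_L = I_L^{\text{SE}}$ from \Cref{prop:harmonic-pres}. The grading check you flag is already handled by \Cref{this-is-segre}: $\varphi_L(z_S)$ restricts on $L$ to $\prod_{i\in S} l_i$, which has polynomial degree $|S|$, so the convention is forced rather than chosen, and your $S\mapsto S^c$ fallback is unnecessary (it would also relabel the circuit relations $f_C^A$, so it would not literally preserve $I_L^{\text{SE}}$).
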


The homogeneous coordinate ring of $Y_L \subseteq 
            (\P^1)^n$ has an explicit presentation given in \cite{AB}, 
            so it is possible to compute 
            defining equations of the Segre embedding $Y_L \subseteq 
            \P^{2^n-1}$ in specific examples using Gr\"obner bases. However, such Gr\"obner bases appear to lack any combinatorial structure. For example, they often fail to have a squarefree initial ideal. Our generating set of $I_L^{\text{SE}}$ is not a Gr\"obner basis but is closely dictated by the combinatorial structure of $M$. This lets us prove that our generating set indeed generates $I_L^{\text{SE}}$ by an inductive deletion/contraction argument.

Finally, inspired by the well-known characterization of which polytopes give rise 
to Gorenstein semigroup algebras \cite[Theorem 
3.8.4]{coxToricVarieties2011}, we characterize which unimodular 
zonotopes have Gorenstein harmonic algebras.

\begin{thrm}\label{thrm:gor}
    Let $Z = A \cdot [0,1]^n$ be a unimodular zonotope, and let $M$ be 
    the matroid of $A$. The harmonic algebra $\mathcal{H}_Z$ is Gorenstein if and only if either $M$ is the Boolean matroid or every connected component of $M$ is a circuit.

\end{thrm}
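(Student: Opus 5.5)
We plan to use the Cohen--Macaulay criterion for Gorensteinness: since \Cref{thrm:CM-intro} gives that $\H_Z$ is a Cohen--Macaulay domain (it is the coordinate ring of the irreducible variety $Y_L$ by \Cref{thrm:geometry}), $\H_Z$ is Gorenstein if and only if its canonical module is free of rank one. By \Cref{thrm:CM-intro}, $\Omega\H_Z \cong \widetilde{\H_Z}$ up to a shift in $q$-degree $d$. By Stanley's theorem for Cohen--Macaulay domains, this happens if and only if the Hilbert series satisfies $E_Z(t^{-1},q^{-1}) = \pm\, t^{a}q^{b}E_Z(t,q)$ for some $a,b$. Equivalently, by the reciprocity in \Cref{thrm:ehrhart_series_intro}, we need $\widetilde{E_Z}(t,q) = t^{a}q^{b}E_Z(t,q)$. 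On coefficients this reads $\widetilde{i_Z}(m;q) = q^{b}\, i_Z(m-a;q)$ for all $m$. The whole problem therefore reduces to a statement about the Tutte evaluations of \Cref{thrm:q-ehrhart}.

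Setting $q=1$ first gives a necessary condition: the ordinary Ehrhart polynomial must satisfy $\widetilde{i_Z}(m)=i_Z(m-a)$. This says the zonotope is Gorenstein in the classical sense, i.e.\ $aZ$ has a unique interior lattice point $v$ and $\mathrm{int}(mZ)\cap\Z^d = v+(m-a)Z\cap \Z^d$. I would first reduce to connected matroids. For a direct sum $M=M_1\oplus M_2$, the zonotope is a product and $T_M=T_{M_1}T_{M_2}$. Hence $\widetilde{i}$ and $i$ factor, and the condition holds for $M$ exactly when it holds for each factor with a common shift $a$. The exception is coloops: their factor is $[0,1]$, with $i(m)=[m+1]_q$ and $\widetilde{i}(m)=q[m-1]_q$, i.e.\ shift $a=2$. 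So the full condition also forces a uniform shift across components.

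Next I would handle connected $M$ of rank $d$ on $n$ elements. Write $T_M(x,y)=\sum t_{ij}x^iy^j$ and compare the two sides of \Cref{thrm:q-ehrhart}. The classical Gorenstein condition for unimodular zonotopes should force $a=2$ or $a=1$. A circuit $U_{n-1,n}$ is a $d$-dimensional simplex-like zonotope (the projection of a cube along $(1,\dots,1)$), which should have $a=2$ when $d\ge1$. To get the characterization, I would compare leading and subleading behaviour in $m$. In $T_M([m-1]_q/[m]_q, q^{-m})$ versus the shifted $T_M([m+1-a]_q/[m-a]_q,q^{-(m-a)})$, the classical matching of degree-$(d-1)$ Ehrhart coefficients (boundary facets) together with the $q$-weights should force all $t_{ij}$ with $j\ge1$ to sit only in the monomials appearing for a circuit. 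Concretely, $T_{U_{n-1,n}}=x^{n-1}+\dots+x+y$, and I expect the condition to pin $T_M$ down to this form. That in turn characterizes circuits among connected matroids, since $t_{0,1}$ alone being nonzero among the $y$-terms forces nullity one.

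Finally, mixing components requires a common $a$. Coloops have $a=2$ and circuits also $a=2$, but the stated answer excludes mixing coloops with circuits. So the $q$-shift $b$ must also be matched, and I expect the $q$-degree bookkeeping (the factor $q^{(n-d)m}$) to distinguish them. Under this bookkeeping a coloop and a circuit component would give incompatible $b$ relative to $m$ (a $q$-shift depending on $m$ is not allowed), whereas all-coloops (Boolean) or all-circuits work. The main obstacle is the connected case: extracting from the $q$-identity that $T_M$ must be a circuit's Tutte polynomial. I would attack it by specializing $q\to$ a root of unity or taking the $q$-degree of top terms in each $t^m$ coefficient, and reading off $t_{ij}$ degree by degree.
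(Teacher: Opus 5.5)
\textbf{There is a genuine gap.} Your opening reduction is sound: $\H_Z$ is a Cohen--Macaulay domain, its canonical module is $\widetilde{\H_Z}$ up to a shift, and Stanley's criterion turns Gorensteinness into an identity $\widetilde{E_Z}=t^aq^bE_Z$. The classification step, however, contains a concrete error that derails the rest of the plan.

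Circuits do \emph{not} have shift $a=2$. For $M=U_{n-1,n}$ one has $\dim(\widetilde{\H_Z})_1=T_M(0,1)=1$, so $Z$ itself has a unique interior lattice point and $a=1$. For the hexagon of Section 2, $\widetilde{i_Z}(2;1)=4\,T_M(\tfrac12,1)=7=i_Z(1;1)$, whereas $i_Z(0;1)=1$. So the obstruction to mixing coloop and circuit components is already visible at $q=1$: coloops force $a=2$ and circuits force $a=1$.

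Your proposed substitute, that the $q$-shift $b$ distinguishes them, cannot work in principle. Gorensteinness does not depend on the grading, and $\H_Z$ with the $t$-grading alone is a standard graded Cohen--Macaulay domain. So Stanley's criterion already says $\H_Z$ is Gorenstein $\iff$ $\widetilde{E_Z}(t,1)=t^aE_Z(t,1)$, and the bigraded identity then holds automatically. The $q$-refinement in \Cref{thrm:q-ehrhart} carries no additional information for this question.

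Two smaller points. For a coloop, $\widetilde{i}(m;q)=[m-1]_q$, not $q[m-1]_q$. And your factorwise reduction needs more than the product formula, since an identity between products of polynomials does not split factor by factor.

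The more serious gap is that the connected case, which you yourself flag as the main obstacle, has no argument. Comparing subleading Ehrhart coefficients or specializing at roots of unity is only a hope. The missing idea, which is how the paper proceeds, is to look at the \emph{lowest} nonzero $t$-degree $m_0$ of $\widetilde{\H_Z}$. If $\widetilde{\H_Z}\cong\H_Z$ up to shift, then $\dim(\widetilde{\H_Z})_{m_0}=\dim(\H_Z)_0=1$.
\begin{itemize}
\item By \Cref{thrm:stanley_ehrhart}, $\dim(\widetilde{\H_Z})_1=T_M(0,1)$, the M\"obius invariant of $M^*$. It is $0$ iff $M$ has a coloop, $1$ iff every connected component of $M$ is a circuit, and $>1$ otherwise.
\item If $M$ has a coloop, then $\dim(\widetilde{\H_Z})_2=2^dT_M(\tfrac12,1)$. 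This is a nonempty sum of nonnegative powers of $2$, and it equals $1$ only when $T_M=x^d$, i.e.\ $M$ is Boolean.
\end{itemize}
This settles necessity with no case analysis on components.

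For sufficiency, the paper shows that $z_\emptyset$ (resp.\ $z_\emptyset^2$ in the Boolean case) lies in $\widetilde{\H_Z}$. Since $\H_Z$ is a domain, $\langle z_\emptyset\rangle\cong\H_Z$ up to shift. It then matches Hilbert functions using $m^dT_M(\tfrac{m+1}{m},1)=(m+1)^dT_M(\tfrac{m}{m+1},1)$ for direct sums of circuits, which follows from $T_M=\prod_i(y+x+\cdots+x^{d_i})$. This forces $\widetilde{\H_Z}=\langle z_\emptyset\rangle$. No $q$-information is needed anywhere.
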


While the matroids that show up in \Cref{thrm:gor} are quite simple, the graded Ehrhart theory appears to be nontrivial, as we see in the example of \Cref{sec:example}. Indeed, when one begins asking refined questions about the cube, many of the answers are highly nontrivial. See the survey paper of Zong \cite{zong05}. If every connected component of $M$ is a circuit, then $Z$ is a product of oriented matroid circuit polytopes corresponding to the classical type $A$ root system \cite{EMW24}. The facial structure and Ehrhart theory of such polytopes are studied in detail in the work of Escobar and McWhirter \cite{EMW24}.

We prove in \Cref{prop:palin} that if $Z$ has a Gorenstein harmonic algebra, then the numerator of $E_Z(t,q)$ displays a remarkable quantum-palindromic symmetry. This symmetry is evocative of Hibi's palindromic theorem which states that reflexive polytopes have symmetric $h^*$ vectors \cite{hibi}.

\subsection*{Acknowledgments} 
We thank Melody Chan, Steven Creech, Galen Dorpalen-Barry, Carly Klivans, Matt Larson, Ruizhen Liu, Nicholas Proudfoot, Vic Reiner, Brendon Rhoades and Benjamin Schr\"oter for helpful conversations and comments. We also thank Benjamin Braun for suggesting that we investigate which unimodular zonotopes have Gorenstein harmonic algebras. Finally, we thank the anonymous FPSAC 2026 referees for their helpful comments on an extended abstract of this project. Their comments influenced the organization of this manuscript. Colin Crowley was partially supported by NSF grant DMS-2039316 and the Simons Foundation. Ethan Partida was partially supported by NSF Grant DMS-2053288, a U.S. Department of Education GAANN award, and the Simons Foundation SFI-MPS-SDF-00015018.

\section{Illustration of results}\label{sec:example}
We now illustrate our results by working out the graded Ehrhart theory of a hexagon in the plane. Let $Z\subseteq \R^2$ be the zonotope defined by the matrix $A$ in the following figure.

\begin{figure}[!h]
\centering
\scalebox{0.5}{
 \begin{tikzpicture}
    \draw[draw=NordBlack, ultra thick] (0,0) -- (3,0) -- (5,3) -- (5,6) -- (2,6) -- (0,3.3) -- (0,0);

    \draw[draw=NordCyan, ultra thick] (0,3.3) -- (3,3.3) -- (5,6) -- (3,3.3) -- (3,0);
    
    \draw[draw=NordCyan, dashed, ultra thick] (0,0) -- (2,3) -- (5,3) -- (2,3) -- (2,6);

    \draw[draw=NordBlack, ultra thick, -to] (8,2) -- (13,2);

    \node (iz) at (10.5,4) {\Huge$A =\begin{bmatrix}
    1 & 0 & 1\\
    0 & 1 & 1
    \end{bmatrix}$};
    
    \filldraw[draw=NordBlack, fill=NordCyan!60!NordWhite, opacity=0.6, ultra thick] (15,0)--(18,0)--(21,3)--(21,6)--(18,6)--(15,3)--(15,0);
    \fill[NordBlue] (15,0) circle (0.25cm);
    \fill[NordBlue] (18,0) circle (0.25cm);
    \fill[NordBlue] (15,3) circle (0.25cm);
    \fill[NordBlue] (18,3) circle (0.25cm);
    \fill[NordBlue] (21,3) circle (0.25cm);
    \fill[NordBlue] (18,6) circle (0.25cm);
    \fill[NordBlue] (21,6) circle (0.25cm);
    \end{tikzpicture}
}
\end{figure}
From the matrix $A$ defining $Z$, we can see that the matroid $M$ of $Z$ 
is the uniform matroid $U_{2,3}$ with bases $\{12,13,23\}$. We now 
calculate $i_Z(1;q)$ in two different ways. The polynomial $i_Z(1;q)$ is 
the Hilbert series of the ring $\text{Orb}(Z\cap \Z^2)$ 
(\Cref{defn:ehr}). In \Cref{thrm:zon_orbit}, we prove that  
$\text{Orb}(Z\cap \Z^2)$ is isomorphic to the external zonotopal algebra 
of $Z$ (\Cref{defn:zon}). 

The external zonotopal algebra of $Z$ is 
defined from the arrangement of hyperplanes normal to the columns of 
$A$, pictured below.
\begin{center}
\begin{tikzpicture}[scale=1.2]
  \draw[black] (0,-2.5) -- (0,2.5);
  \node[black, right] at (0,2.6) {$x_1=0$};
  \draw[black] (-2.5,0) -- (2.5,0);
  \node[black, above] at (2.6,0) {$x_2=0$};
  \draw[black] (-2.5,2.5) -- (2.5,-2.5);
  \node[black, above right] at (-2.2,2.2) {$x_1+x_2=0$};
  \draw[->, black, very thick] (0,0) -- (0,1.5);
  \node[black, right] at (0,1.2) {$r_2$};
  \draw[->, black, very thick] (0,0) -- (1.5,0);
  \node[black, above] at (1.2,0) {$r_1$};
  \draw[->, black, very thick] (0,0) -- (1.5,-1.5);
  \node[black, below] at (1.1,-1.1) {$r_3$};
\end{tikzpicture}
\end{center}
In each one dimensional vector subspace defined by the hyperplanes, we 
choose a vector $r_i$. By \Cref{lem:fin_gen_set}, the external zonotopal algebra $R_Z^{\text{ext}}$ 
is the quotient of the symmetric algebra generated by the $r_i$ subject to the linear relations among the $r_i$ and the relations $r_i^{m_i + 1}$, where $m_i$ is the number of hyperplanes not containing $r_i$. 

Through direct calculation, we can compute
\[\text{Orb}(Z\cap \Z^2) \simeq R_Z^{\text{ext}} \simeq \frac{\C[r_1,r_2]}{\langle r_1^3,r_2^3, (r_1-r_2)^3\rangle } \quad\text{and}\quad i_Z(1;q)= \Hilb(R_Z^{\text{ext}};q)=1+2q+3q^2+q^3.\]
We can also compute $i_Z(1;q)$ using \Cref{thrm:q-ehrhart}:
\[i_Z(1;q) = qT_{U_{2,3}}(1+q,q^{-1})= q((q+1)^2+(1+q)+q^{-1}) = 1+2q+3q^2+q^3.\]

\noindent The graded Ehrhart polynomial $\ehr_Z(t;q) \in \mathcal{R}_q \subseteq \Q(q)[t]$ of $Z$ (\Cref{prop:ehrhart_poly}) is
\begin{gather*}
    \ehr_Z(t;q)= (q^3-q)t^3+3q^2t^2+3qt+1.
\end{gather*}
This is the quantum integer-valued polynomial defined by the property 
that $\ehr_Z([m]_q;q)=i_Z(m;q)$ for all non-negative integers $m$. 
\Cref{prop:ehrhart_poly} tells us that after applying the bar involution 
$q \mapsto q^{-1}, t \mapsto -qt$ (\Cref{diag:bar}) and multiplying by $q^{-2}$, we obtain a quantum integer-valued polynomial
\[\widetilde{\ehr_Z}(t;q)= (1-q^{-2})t^3+3q^{-2}t^2-3q^{-2}t+q^{-2} \]
such that $\widetilde{\ehr_Z}([m]_q;q)= \widetilde{i_Z}(m;q)$ for all 
positive integers $m$. For example, after plugging in $t=[1]_q=1$, we see that $\widetilde{\ehr_Z}([1]_q;q)=1=\widetilde{i_Z}(1;q)$.

The $q$-binomial coefficient polynomials $\qbinom{t}{k}\in \mathcal{R}_q\subseteq \Q(q)[t]$ are polynomials such that, after setting $t=[m]_q$, we obtain the $q$-binomial coefficients $\binom{m}{k}_q$. We can expand $\ehr_Z(t;q)$ into the $q$-binomial coefficient polynomial basis as
\begin{align*}\ehr_Z(t;q) = &\qbinom{t}{0}+(q^3+3q^2+2q)\qbinom{t}{1}+(q^6+3q^5+4q^4-2q^2)\qbinom{t}{2}\\
&+(q^9+2q^8+q^7-q^6-2q^5-q^4)\qbinom{t}{3}. 
\end{align*}
As the generating functions of $q$-binomial coefficients are well understood, we can use our expansion of $\ehr_Z(t;q)$ into binomial coefficients to compute
\[E_Z(t,q) = \sum_{m\geq 0} i_Z(m;q)t^m =\frac{-q^4t^3-(q^3+2q^2)t^2+(2q^2+q)t+1}{(1-t)(1-tq)(1-tq^2)(1-tq^3)} \,. \]
Note that, unlike in classical Ehrhart theory, our numerator can have negative coefficients. Doing a similar process for the generating function of $\widetilde{i_Z}(t;q)$, we obtain

\[\widetilde{E_Z}(t,q) = \sum_{m\geq 1} \widetilde{i_Z}(m;q)t^m =\frac{(-q^4t^3-(q^3+2q^2)t^2+(2q^2+q)t+1)t}{(1-t)(1-tq)(1-tq^2)(1-tq^3)} \,. \]

\noindent With these explicit presentations, one can compute that $q^2 \widetilde{E_Z}(t,q) = (-1)^3 E_Z(t^{-1},q^{-1})$. In \Cref{thrm:ehrhart_series}, we prove that such a reciprocity holds for all unimodular zonotopes.

We now describe a presentation of the harmonic algebra $\H_Z$ following \Cref{prop:harmonic-pres}. The only circuit $C$ of $U_{2,3}$ is $C=123$. The complement of $C$ is the empty set. Thus
\begin{align*}
\H_Z &\simeq  \frac{\C[z_\emptyset, z_1,z_2,z_3,z_{12},z_{13},z_{23},z_{123}] }{\langle z_1+z_2-z_3, z_Sz_T - z_{S\cup T} z_{S\cap T}: S,T\subseteq [3]\rangle } \,.
\end{align*}
Using the single linear relation, we can verify that $(\H_Z)_{(1,*)}$  has $\C$-basis \begin{equation}\label{eq:harmEx}(\H_Z)_{(1,*)} = \text{span}_\C\{z_\emptyset, z_1, z_2,z_{12},z_{13},z_{23},z_{123}\}\,.\end{equation}
As each variable $z_S$ has degree $(1,|S|)$, we can use our $\C$-basis to confirm that $i_Z(1;q)$ is equal to $\Hilb((\H_Z)_{(1,*)};q)$. \Cref{prop:harmonic-pres} tells us that this equality continues to hold upon replacing $1$ with any non-negative integer $m$.

The interior ideal $\widetilde{\H_Z}$ can be identified with the ideal $\langle z_{\emptyset}\rangle$ of our presentation of $\H_Z$. As an example of this, we claim that $\widetilde{\H_Z}_{(2,*)}$ has $\C$-basis
\begin{equation}\label{eq:internalEx}\widetilde{\H_Z}_{(2,*)}= \text{span}_\C\{z_{\emptyset}^2, z_1z_{\emptyset}, z_2z_{\emptyset}, z_{12}z_{\emptyset},z_{13}z_{\emptyset},z_{23}z_{\emptyset},z_{123}z_{\emptyset}\}\,. \end{equation}

Note that our basis (\ref{eq:internalEx}) of $(\widetilde{\H_Z})_{(2,*)}$ is simply our basis (\ref{eq:harmEx}) of $(\H_Z)_{(1,*)}$ multiplied by $z_{\emptyset}$. This is a relic of the fact that $\langle z_{\emptyset} \rangle \simeq \H_Z(1,0)$ as bigraded $\H_Z$-modules. The matroid $U_{2,3}$ has one connected component, namely its ground set, and this connected component is its unique circuit. Because of this, \Cref{thrm:gor} tells us that $\H_Z$ is Gorenstein, which is consistent with our above observation. The numerator of $E_Z(t,q)$ is 
\[-q^4t^3-(q^3+2q^2)t^2+(2q^2+q)t+1. \]
Note the symmetry of its $t$-coefficients. Namely, if we let $g_k(q)$ be the coefficient of $t^k$, then $g_k(q) = -q^4 g_k(q^{-1})$ for $k\in \{0,1\}$. This symmetry is consistent with the quantum symmetry prescribed in \Cref{prop:palin}.

\section{Background}\label{sec:background}
\subsection{Graded Ehrhart theory}\label{sec:gradedEhrhart}
Recently, Reiner and Rhoades have introduced a new $q$-analogue of Ehrhart theory \cite{RR24}. For every lattice polytope $P$ and non-negative integer $m$, they construct polynomials $i_P(m;q)$ and $\widetilde{i_P}(m;q)$ in $\Z[q]$ with non-negative coefficients such that, in the $q$ equal to one limit, $i_P(m;1) = \vert mP \cap \mathbb{Z}^d\vert$ and $\widetilde{i_P}(m;1) = \vert \text{int}(mP) \cap \mathbb{Z}^d\vert$ where $\text{int}(mP)$ is the interior of the $m$th dilate of $P$. Reiner and Rhoades construct these polynomials using the methods of \emph{orbit harmonics}. The orbit harmonics ring $\text{Orb}({\mathcal{Z}})$ of a finite subset $\mathcal{Z} \subseteq \mathbb{C}^d$ is the ring
\[\text{Orb}({\mathcal{Z}})= \frac{\mathbb{C}[x_1,x_2,\ldots,x_d]}{\gr I(\mathcal{Z})}\]
where $\gr I(\mathcal{Z})$ is the ideal generated by all of the top degree homogeneous components $f_k$ of polynomials $f = f_k+ f_{k-1}+\ldots + f_0$ vanishing on $\mathcal{Z}$. The ring $\text{Orb}({\mathcal{Z}})$ is graded by degree and has total dimension equal to the cardinality of $\mathcal{Z}$. In this paper our loci $\mathcal{Z}$ will be the lattice point of real polytopes. We will view $\mathcal{Z}$ as a point locus in $\C^d$ via the canonical inclusion $\mathcal{Z} \subseteq \Z^d \hookrightarrow \Z^d\otimes_\Z \C \simeq \C^d$. 

For a $\mathbb{N}^k$-graded $\C$-algebra $R = \oplus_{\alpha\in \mathbb{N}^k} R_\alpha$, the Hilbert series of $R$ is the generating function $\Hilb(R;q_1,\ldots, q_k)\coloneqq \sum_{\alpha \in \mathbb{N}^k} \dim_\C (R_\alpha) q_1^{\alpha_1}\cdots q_k^{\alpha_k}$. If $R$ is finite dimensional as a $\C$-vector space, then $\Hilb(R;q_1,\ldots, q_k)$ is a polynomial in $\Z[q_1,\ldots, q_k]$. 

\begin{defn}\label{defn:ehr}
The graded lattice point counts $i_P(m;q)$ and $\widetilde{i_P}(m;q)$ are the Hilbert series
\[i_P(m;q) \coloneqq \Hilb(\text{Orb}(mP\cap \Z^d);q) \quad \text{and}\quad \widetilde{i_P}(m;q) \coloneqq \Hilb(\text{Orb}(\text{int}(mP)\cap \Z^d);q). \]
The graded Ehrhart series $E_P(m;q)$ and $\widetilde{E_P}(m;q)$ are the bivariate generating functions
\[E_P(t,q) = \sum_{m\geq 0} i_P(m;q) t^m \quad \text{and} \quad \widetilde{E_P}(t,q) = \sum_{m\geq1} \widetilde{i_P}(m;q)t^m \,. \]
\end{defn}
There is a bigraded $\C$-algebra $\mathcal{H}_P$ and a homogeneous ideal $\widetilde{\mathcal{H}_P} \subseteq \mathcal{H}_P$ whose bigraded Hilbert series are the graded Ehrhart series $E_P(t,q)$ and $\widetilde{E_P}(t,q)$, respectively. The algebra $\mathcal{H}_P$ is called the \emph{harmonic algebra} of $P$ and the ideal $\widetilde{\mathcal{H}_P}$ is called the \emph{interior ideal} of $P$. We now work towards defining these objects. First, we recall the definition of a \emph{Macaulay inverse system}.

\begin{defn}\label{inverse-system}
Given a polynomial $f \in \C[x_1,\ldots,x_n]$, write $f(D)$ for the 
partial differential operator $f(\frac{\partial}{\partial x_1},\ldots, 
\frac{\partial}{\partial x_n})$.
Given an ideal $I \subseteq \C[x_1,\ldots,x_n]$, let $I^\perp$ denote 
the Macaulay inverse system:
\[
  I^\perp := \{f \in \C[x_1,\ldots, x_d] \mid 0 = (f(D) \cdot g)\vert_{x_1=\ldots x_n=0}\ 
  \text{for all $g \in I$}\}.
\]
\end{defn}
For a lattice polytope $P \subseteq \R^d$, we set
\[
  V_P \coloneqq (\gr I(P \cap \Z^d))^\perp\quad \text{and} \quad\widetilde{V_P} \coloneqq (\gr I(\text{int}(P) \cap \Z^d))^\perp.
\]
  If $I \subseteq \C[x_1,\ldots,x_d]$ is a graded ideal, then $I^\perp$ is a graded vector space which is naturally isomorphic to the linear dual of the quotient by $I$. This ensures that \[i_P(m;q) = \sum_{j\geq 0} \dim(V_{mP})_j q^j \quad \text{ and } \quad \widetilde{i_P}(m;q) = \sum_{j\geq 0} \dim(\widetilde{V_{mP}})_j q^j.\]

\begin{defn}\label{defn:harm}
For a lattice polytope $P\subseteq \R^d$, consider the bigraded polynomial ring $\C[x_0,\ldots,x_d]$ where $\deg(x_0)= (1,0)$ and $\deg(x_i) = (0,1)$ for all $i\geq 1$. The harmonic algebra $\H_P$ and the interior ideal $\widetilde{\H_P}$ are the bigraded subspaces of $\C[x_0,\ldots, x_d]$ defined as
\[ \H_P \coloneqq \bigoplus_{m \geq 0} \C\cdot x_0^m \otimes_\C V_{mP} \quad \text{ and } \quad \widetilde{\H_P} \coloneqq \bigoplus_{m \geq 1} \C\cdot x_0^m \otimes_\C \widetilde{V_{mP}}\,.\] 
\end{defn}    
  
Our construction ensures that
\[\Hilb(\H_P; t,q) \coloneqq \sum_{m,j} \dim(\H_P)_{m,j}t^mq^j = E_P(t,q) \]
and similarly that $\Hilb(\widetilde{\H_P};t,q) = \widetilde{E_P}(t,q)$. 
One of the main contributions of Rhoades and Reiner is to show that $\H_P$ admits the structure of a $\C$-algebra and that $\widetilde{\H_P}$ includes as an ideal under this structure.

\begin{thrm}[\cite{RR24}]
  The harmonic algebra $\H_P$ has the structure of a ring, where the multiplication is 
  $( x_0^i\otimes f ,  x_0^j\otimes g ) \mapsto x_0^{i+j} \otimes fg $. 
  Moreover, the interior ideal $\widetilde{\H}_P$ is naturally a subspace of $\H_P$ and forms an 
  ideal under this product structure.
\end{thrm}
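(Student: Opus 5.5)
The plan is to prove two facts about the Macaulay inverse systems $V_P$: that $V_{iP}\cdot V_{jP}\subseteq V_{(i+j)P}$, and that $V_{iP}\cdot \widetilde{V_{jP}}\subseteq \widetilde{V_{(i+j)P}}$. The first says that the multiplication $(x_0^i\otimes f)(x_0^j\otimes g)=x_0^{i+j}\otimes fg$ lands back in $\H_P$. Associativity, commutativity and the unit $x_0^0\otimes 1$ are then inherited from $\C[x_0,\ldots,x_d]$, since $\H_P$ sits inside it as a bigraded subspace. The second gives the ideal property of $\widetilde{\H_P}$. The containment $\widetilde{\H_P}\subseteq\H_P$ follows from $\text{int}(mP)\cap\Z^d\subseteq mP\cap\Z^d$: a smaller point set has a larger vanishing ideal, hence a larger associated graded ideal and a smaller inverse system.

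\textbf{Step 1: describe $V_\mathcal{Z}$ through exponentials.} For a finite set $\mathcal{Z}\subseteq\C^d$, a polynomial $f$ lies in $(\gr I(\mathcal{Z}))^\perp$ exactly when $f$ lies in the span of the lowest-degree homogeneous components of elements of $\operatorname{span}\{e^{\langle z,x\rangle}: z\in\mathcal{Z}\}$. This is the standard orbit-harmonics duality; I would derive it by pairing $g(D)$ against exponentials, using that $g(D)e^{\langle z,x\rangle}|_{0}=g(z)$. Concretely, a function $\phi=\sum_{z\in\mathcal Z} c_z e^{\langle z,x\rangle}$ is killed by $g(D)$ at the origin for every $g\in I(\mathcal Z)$. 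Passing to leading and lowest terms, $\gr I(\mathcal{Z})$ annihilates the lowest-degree forms $\phi_{\min}$. A dimension count then gives equality, since both sides have total dimension $|\mathcal Z|$.

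\textbf{Step 2: products.} Take $f=\phi_{\min}$ with $\phi$ supported on $iP\cap\Z^d$, and $g=\psi_{\min}$ with $\psi$ supported on $jP\cap\Z^d$. The product $\phi\psi$ is a combination of $e^{\langle z+w,x\rangle}$ with $z+w\in (i+j)P\cap\Z^d$, by convexity. Its lowest-degree part is $\phi_{\min}\psi_{\min}=fg$, since a product of nonzero forms is nonzero. Therefore $fg\in V_{(i+j)P}$. Bilinearity extends this to spans, which is needed because a general element of $V_{iP}$ is a sum of such $\phi_{\min}$ of a fixed degree. For the interior ideal, note that $z\in iP$ and $w\in\text{int}(jP)$ give $z+w\in\text{int}((i+j)P)$. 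The same argument with $\psi$ supported on interior points then gives $V_{iP}\widetilde{V_{jP}}\subseteq\widetilde{V_{(i+j)P}}$.

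The main obstacle is Step 1, specifically the precise duality between $\gr$ of the vanishing ideal and the lowest-degree parts of exponential combinations. I would prove it by observing that the bilinear pairing $\langle g,\phi\rangle=g(D)\phi|_0$ restricts to a perfect pairing between $\C[x]/I(\mathcal Z)$ and the exponential span. I would then filter both sides by degree and check that the filtration on one side is dual to the order filtration on the other.
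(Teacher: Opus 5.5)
Your proof is correct. The paper does not prove this statement itself; it imports it from \cite{RR24}, so there is no internal argument to compare against.

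Your reduction to the containments $V_{iP}\cdot V_{jP}\subseteq V_{(i+j)P}$ and $V_{iP}\cdot\widetilde{V_{jP}}\subseteq\widetilde{V_{(i+j)P}}$ is a complete, self-contained argument. You prove these via the least-space description of harmonic spaces (de Boor--Ron) together with $e^{\langle z,x\rangle}e^{\langle w,x\rangle}=e^{\langle z+w,x\rangle}$.

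The only nontrivial input is the dimension count in Step 1, and your outline covers it:
\begin{itemize}
\item The exponentials attached to distinct points are linearly independent.
\item Passing to lowest-degree terms preserves dimension on a finite-dimensional space $E$ of power series. Indeed, each quotient $E_{\geq k}/E_{\geq k+1}$ of the order filtration injects into degree-$k$ forms, with image exactly the degree-$k$ least terms.
\end{itemize}

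It is worth noting the asymmetry in how Step 1 is used. You need the full equality for the source loci $iP\cap\Z^d$ and $jP\cap\Z^d$, so that every harmonic polynomial there is a sum of least terms. For the target locus you only need the easy containment that least terms lie in the inverse system.
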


\subsection{Matroids and the Tutte polynomial}\label{subsec:matroids}

Let $\FF$ be a field. Given $S \subseteq [n]$, write $\FF^S$ for the 
corresponding coordinate subspace and $\pr_S:\FF^n \to \FF^S$ for the 
coordinate projection.

Suppose that $L \subseteq \FF^n$ is a $d$-dimensional linear subspace. The 
\textbf{matroid} $M$ associated to $L$ can be 
defined via its bases, which are given by
\[\mathcal{B}(M) \coloneqq \left\{ B \in \binom{[n]}{d} \,\middle\vert\, \pr_B(L) = \mathbb{F}^B \right\}.\]
The set $[n]$ of coordinates is part of the data of $M$, and is referred 
to as the \textbf{ground set}.
A matroid arising from a subspace $L \subseteq \FF^n$ is called 
\textbf{realizable}. For the defining axioms of a matroid and background on matroid theory, we refer to 
\cite{oxleyMatroidTheory2011}.

\begin{defn}
Let $M$ be a matroid realized by a linear subspace $L \subseteq 
\FF^n$, and $i \in [n]$.
\begin{itemize}
    \item We say $i$ is a \textbf{loop} if no basis of $M$ 
        contains $i$, or equivalently, $L \subseteq \FF^{[n]\setminus i}$.
    \item We say $i$ is a \textbf{coloop} if every basis of $M$ 
        contains $i$, or equivalently, $\FF^{\{i\}} \subseteq L$.
    \item If $i$ is not a loop, the \textbf{contraction} of $M$ by $i$ 
        is the matroid on $[n] \setminus i$ defined by
        \[
            \mathcal{B}(M/i) := \{ B\setminus i \mid i \in B \in 
            \mathcal{B}(M)\}\quad \text{and realized by} \quad L/i := L \cap 
            \FF^{[n]\setminus i}.
        \]
    \item If $i$ is not a coloop, the \textbf{deletion} of $M$ by $i$ is 
        the matroid on $[n] \setminus i$ defined by
        \[
            \mathcal{B}(M\setminus i) := \{ B \mid i \not\in B \in 
            \mathcal{B}(M)\}\quad \text{and realized by} \quad L\setminus i := 
            \pr_{[n]\setminus i}(L).
        \]
\end{itemize}
\end{defn}

The Tutte polynomial is the universal matroid invariant which satisfies 
the following deletion/contration recursion. We use 
\cite{welshTuttePolynomial1999} as our reference for the following well 
know statement.
\begin{prop}\label{prop:tutte_def}
    There exists a unique way of assigning a polynomial $T_M(x,y) \in 
    \Z_{\geq 0}[x,y]$ to every matroid, such that $T_M(x,y) = x^ay^b$ whenever 
    the ground set of $M$ consists of $a$ coloops and $b$ loops, and 
    $T_M(x,y) = T_{M\setminus i}(x,y) + T_{M/i}(x,y)$ whenever $i$ is neither a loop nor a coloop. 
    The polynomial $T_M(x,y)$ has non-negative integer coefficients and is of $x$-degree at most $d$ and $y$-degree at most $n-d$.
\end{prop}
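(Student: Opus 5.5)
We prove the statement by induction on the size $n$ of the ground set, following the standard construction of the Tutte polynomial via the rank generating function. For existence, we define explicitly
\[
T_M(x,y) \coloneqq \sum_{S\subseteq [n]} (x-1)^{d-r(S)}(y-1)^{|S|-r(S)},
\]
where $r(S)=\dim \pr_S(L)$ is the rank function of $M$. For a realizable matroid this is the dimension of the projection, and for an abstract matroid it is the size of a largest independent subset of $S$.

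The first step is the recursion. Let $i$ be neither a loop nor a coloop, and split the sum according to whether $i\in S$. The key facts are the following.
\begin{itemize}
\item For $S\not\ni i$, the rank of $S$ in $M\setminus i$ equals $r(S)$. The total rank is unchanged, since $i$ is not a coloop.
\item For $S\ni i$, the rank of $S\setminus i$ in $M/i$ equals $r(S)-1$. The total rank drops to $d-1$, since $i$ is not a loop.
\end{itemize}
Substituting these, the two halves of the sum are exactly $T_{M\setminus i}$ and $T_{M/i}$, so $T_M = T_{M\setminus i} + T_{M/i}$.

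The second step is the base case. If every element is a loop or a coloop, then $r(S)$ equals the number of coloops in $S$. The sum then factors as a product over elements. Each coloop contributes $(x-1)+1 = x$ and each loop contributes $1+(y-1) = y$, giving $x^ay^b$.

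Uniqueness is immediate by induction on $n$. Any matroid either consists only of loops and coloops, or has an element $i$ that is neither. In the second case the value of the invariant on $M$ is forced by its values on the smaller matroids $M\setminus i$ and $M/i$.

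Nonnegativity and the degree bounds also follow by induction from the recursion. The polynomial $x^ay^b$ has nonnegative coefficients, $x$-degree $a$, which is the rank $d$, and $y$-degree $b = n-d$. Deletion of $i$ preserves $d$ and lowers $n-d$ by one. Contraction of $i$ lowers $d$ by one and preserves $n-d$. So both terms in the recursion satisfy the bounds $\deg_x\le d$ and $\deg_y\le n-d$, and a sum of polynomials with nonnegative coefficients has nonnegative coefficients.

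The main subtlety is the rank bookkeeping for deletion and contraction. For realizable matroids this reduces to linear algebra on $L$:
\begin{itemize}
\item $\pr_S(L\setminus i) = \pr_S(L)$ for $S\subseteq [n]\setminus i$.
\item $\dim \pr_{S}(L\cap \FF^{[n]\setminus i}) = \dim\pr_{S\cup i}(L) - 1$ when $i$ is not a loop.
\end{itemize}
The second identity holds because $L\cap \FF^{[n]\setminus i}$ is the kernel of the nonzero functional $x_i$ on $L$, and this functional remains nonzero after projecting to $S\cup i$. For abstract matroids one instead cites the standard rank formulas $r_{M/i}(S)=r(S\cup i)-r(i)$ and $r_{M\setminus i}(S) = r(S)$, as in \cite{welshTuttePolynomial1999}.
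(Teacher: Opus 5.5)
Your proof is correct and complete. The paper gives no argument of its own for this proposition; it states it as a well-known fact and refers to Welsh's survey. What you wrote is the standard argument behind that citation: existence via the corank--nullity expansion $\sum_{S}(x-1)^{d-r(S)}(y-1)^{|S|-r(S)}$, uniqueness by induction on $n$, and positivity and the degree bounds by induction through the recursion. Your version adds that it is self-contained and checks the rank bookkeeping directly in the paper's conventions $L\setminus i=\pr_{[n]\setminus i}(L)$ and $L/i=L\cap\FF^{[n]\setminus i}$.

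Two terse steps are easy to make explicit if you write it up. The contraction identity is the equality $\pr_{S\cup i}(L\cap\FF^{[n]\setminus i})=\pr_{S\cup i}(L)\cap\{x_i=0\}$, which holds because $x_i$ factors through $\pr_{S\cup i}$. In the base case, $L=\FF^{C}$ where $C$ is the set of coloops, so $r(S)=|S\cap C|$.
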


We now review the $m$-thickening of a matroid with ground set $[n]$, which 
is a matroid with ground set $[m] \times [n]$. We will find the 
following notation useful.
\begin{notation}\label{zero-one-matrix-notation}
We consider a subset $S \subseteq [m] 
\times [n]$ as an $m \times n$ matrix with $0/1$ entries, where the 
$(i,j)$th entry is a one if and only if $(i,j) \in S$. We refer to 
\[
    S_{i,*} \coloneqq \{j \in [n] \mid (i,j) \in S\} \quad \text{and} \quad S_{*,j} \coloneqq \{i \in [m] \mid (i,j) \in S\}
\]
as the $i$th row of $S$ and $j$th column of $S$, respectively. We refer to 
\[
    \rowsupp(S) := \{i \in [m] \mid |S_{i,*}|>0\}
\]
as the row support of $S$, and we refer to 
\[
    \colsupp(S) := \{j \in [n] \mid |S_{*,j}|>0\}
\]
as the column support of $S$. See \Cref{matrix-notation-example}.
\end{notation}

\begin{example}\label{matrix-notation-example}
    The subset $S = \{(1,4),(1,5),(2,2),(2,3),(2,4)\} \subseteq [3] 
    \times [5]$ is represented by the matrix
    \[
\begin{pmatrix}
    0 & 0 & 0 & 1 & 1 \\
    0 & 1 & 1 & 1 & 0 \\
    0 & 0 & 0 & 0 & 0
\end{pmatrix}\,.
\]
We have that
    $\rowsupp(S) = \{1,2\}$, $\colsupp(S) = \{2,3,4,5\}$, $S_{1,*} = 
    \{4,5\}$, and $S_{*,4}= \{1,2\}$.
\end{example}

\begin{defn}\label{dilation-def}
    Let $M$ be a matroid of rank $d$ with ground set $[n]$ realized by 
    $L \subseteq \FF^n$. The \textbf{$m$-thickening}
    $M(m)$ is the matroid of rank $d$ with ground set $[m] \times [n]$ defined by
    \[
        \mathcal{B}(M(m)) := \left\{ S \in {\binom{[m] \times [n]}{d}} 
            \,\middle\vert\, \colsupp(S) \in 
                \mathcal{B}(M)\right\},
    \]
    and realized by the image $L(m)$ of $L$ under the diagonal embedding $\FF^n \subseteq 
(\FF^n)^m$.
\end{defn}

The following formula can be found in \cite[Lemma 
2.2]{bekeMerinoWelshConjectureFalse2024} or \cite{JVW90}.
\begin{prop}\label{prop:thickening}
    If $M$ is a matroid of rank $d$ and $M(m)$ is the $m$-thickening, then  
    \[
    T_{M(m)}(x,y) = (y^{m-1} + y^{m-2} + \ldots + 1)^{d}T_M(\frac{y^{m-1} + y^{m-2} + \ldots + y + x}{y^{m-1} + y^{m-2} + \ldots + y + 1}, y^m).
    \]
\end{prop}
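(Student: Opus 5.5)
The plan is to avoid deletion/contraction and use the corank--nullity (Whitney rank generating function) expansion of the Tutte polynomial. Recall that for any matroid $N$ of rank $d$ on ground set $E$ with rank function $r_N$, one has $T_N(x,y)=\sum_{S\subseteq E}(x-1)^{d-r_N(S)}(y-1)^{|S|-r_N(S)}$. This formula is equivalent to the recursion in \Cref{prop:tutte_def}, since both sides satisfy the same deletion/contraction and loop/coloop rules. I will apply it to $N=M(m)$ on $E=[m]\times[n]$.

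\textbf{Step 1: the rank function of $M(m)$.} From the bases in \Cref{dilation-def}, an independent set of $M(m)$ is a set $S$ with $|S|=|\colsupp(S)|$ and $\colsupp(S)$ independent in $M$. Hence $r_{M(m)}(S)=r_M(\colsupp(S))$. Equivalently, in the realization $L(m)$ every element $(i,j)$ is a parallel copy of $j$. The rank of $M(m)$ is therefore still $d$.

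\textbf{Step 2: group the sum by column support.} For fixed $A\subseteq[n]$, the sets $S$ with $\colsupp(S)=A$ are obtained by choosing, independently for each $j\in A$, a nonempty column $S_{*,j}\subseteq[m]$. The rank term depends only on $A$, so the $|S|$-dependence factors as
\[
\sum_{\colsupp(S)=A}(y-1)^{|S|}=\prod_{j\in A}\sum_{k=1}^m\binom{m}{k}(y-1)^k=(y^m-1)^{|A|}.
\]
This gives
\[
T_{M(m)}(x,y)=\sum_{A\subseteq[n]}(x-1)^{d-r(A)}(y-1)^{-r(A)}(y^m-1)^{|A|},
\]
where $r=r_M$.

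\textbf{Step 3: regroup and match with $T_M$.} Write $[m]_y=y^{m-1}+\dots+1=(y^m-1)/(y-1)$ and split $(y^m-1)^{|A|}=(y^m-1)^{|A|-r(A)}(y-1)^{r(A)}[m]_y^{r(A)}$. The sum then becomes
\[
[m]_y^{d}\sum_{A}\Bigl(\tfrac{x-1}{[m]_y}\Bigr)^{d-r(A)}(y^m-1)^{|A|-r(A)}.
\]
This is $[m]_y^dT_M(X,y^m)$ with $X-1=(x-1)/[m]_y$, that is, $X=\frac{y^{m-1}+\dots+y+x}{[m]_y}$, which is the claimed formula.

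The only step needing care is Step 1, the identification of the rank function; the rest is a direct computation. The divisions by $[m]_y$ and $y-1$ are harmless: they can be carried out in $\Q(x,y)$, and the resulting identity of rational functions is an identity of polynomials.
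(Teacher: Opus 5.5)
Your proof is correct. The paper gives no argument of its own for this proposition. It quotes it from \cite[Lemma 2.2]{bekeMerinoWelshConjectureFalse2024} and \cite{JVW90}, where it is the standard formula for replacing each element by a class of $m$ parallel copies. That formula can be obtained either from the rank generating function or as a special case of Brylawski's tensor product formula.

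You carry out the rank-generating-function computation directly, and each step checks out:
\begin{itemize}
\item the basis description in \Cref{dilation-def} gives $r_{M(m)}(S)=r_M(\colsupp(S))$;
\item the nullity factor then splits column by column as $\prod_{j\in A}\bigl((1+(y-1))^m-1\bigr)=(y^m-1)^{|A|}$;
\item the leftover factor $[m]_y^{r(A)}$ is absorbed by rescaling $x-1$ to $(x-1)/[m]_y$.
\end{itemize}

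Compared with the citation, your route makes the section self-contained. It also shows plainly that the identity holds for every matroid, not only those coming from unimodular zonotopes. The paper actually needs this generality: \Cref{lem:ISE-lower-bound} invokes \eqref{eq:thickening_eq} for an arbitrary subspace $L\subseteq\C^n$, where the first derivation in \Cref{dilated-zonotope-vs-matroid} via Stanley's zonotope count is unavailable. The citation, for its part, is brief and places the formula inside the general tensor-product framework, which covers substituting arbitrary matroids for the elements of $M$, not just parallel classes.
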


\subsection{Zonotopes and zonotopal algebras} \label{subsec:zonotopes}

\begin{defn}
A \emph{zonotope} $Z\subseteq \R^d$ is a polytope that can be written as 
the image of the unit cube $[0,1]^n\subseteq \R^n$ under a linear 
projection $A:\R^n\twoheadrightarrow \R^d$. We often take $A$ as part of the data of 
$Z$ and write $Z= A\cdot [0,1]^n \subseteq \R^d$. 
If $A$ can be written 
as a $d \times n$ full rank integer matrix which is unimodular, 
i.e.\ all maximal minors of $A$ are contained in the set $\{-1,0,1\}$, then we 
say that $Z$ is a \emph{unimodular zonotope}. 
\end{defn}

We refer the reader to \cite{BVM18} for an excellent survey and introduction to lattice zonotopes. The Ehrhart theory of unimodular zonotopes is entirely governed by 
matroid theory. The following theorem of Stanley illustrates this phenomenon. 

\begin{thrm}[Theorem 2.2 of \cite{stanleyzonotope}] \label{thrm:stanley_ehrhart}
    Let $m\geq 1$ be an integer, $Z=A\cdot [0,1]^n\subseteq \R^d$ be a unimodular zonotope and $M$ be the matroid of $A$. For any positive integer $m$,
  \begin{equation*}
    \vert mZ \cap \Z^d \vert  =  m^d 
    T_M(\frac{m+1}{m},1)\,\,\,\text{and}\,\,\,
    \vert \mathrm{int}(mZ) \cap \Z^d \vert = m^d 
    T_M(\frac{m-1}{m},1).
\end{equation*}
In particular, 
\[T_M(2,1) = |Z \cap \Z^d|\quad \text{and} \quad T_M(0,1) = 
    |\mathrm{int}(Z) \cap \Z^d|. \]
\end{thrm}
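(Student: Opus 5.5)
The plan is to first handle the case $m=1$ via a half-open parallelepiped decomposition of $Z$, then pass to dilates using the $m$-thickening, and finally get the interior count from classical Ehrhart--Macdonald reciprocity.

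\emph{Case $m=1$.} I will use the standard decomposition (due to Shephard, and used by Stanley) of a zonotope $Z=A\cdot[0,1]^n$ into half-open parallelepipeds. Fix a generic linear functional. Then $Z$ is a disjoint union
\[
Z=\bigsqcup_{I} \bigl(v_I+\Pi_I\bigr),
\]
where $I$ runs over the independent sets of $M$, $v_I$ is a sum of a subset of the columns of $A$, and $\Pi_I$ is a half-open parallelepiped spanned by the columns $a_i$, $i\in I$. The exact choice of which faces are open is dictated by the functional.

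Since $v_I\in\Z^d$, it suffices to count lattice points in $\Pi_I$. Because $A$ is unimodular, $I$ extends to a basis $B$ of $M$ with $\det A_B=\pm1$. Hence $\{a_i\}_{i\in I}$ is part of a lattice basis of $\Z^d$, and the half-open parallelepiped $\Pi_I$ contains exactly one lattice point, namely the appropriate vertex. Therefore
\[
|Z\cap\Z^d| = \#\{\text{independent sets of }M\}=T_M(2,1).
\]
The main obstacle is setting up this half-open decomposition carefully, in particular that the pieces are indexed precisely by independent sets. I would cite Shephard/Stanley for it rather than reprove it.

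\emph{General $m$.} Observe that
\[
mZ=[A\,A\,\cdots\,A]\cdot[0,1]^{mn}.
\]
The matrix $[A\cdots A]$ is still unimodular, since its maximal minors are either $\pm$ minors of $A$ or vanish because of a repeated column. Its matroid is the $m$-thickening $M(m)$ of \Cref{dilation-def}. Applying the $m=1$ case gives $|mZ\cap\Z^d|=T_{M(m)}(2,1)$. Setting $x=2$, $y=1$ in \Cref{prop:thickening} turns this into
\[
m^d\,T_M\!\left(\frac{m+1}{m},1\right).
\]

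\emph{Interior count.} Since $T_M$ has $x$-degree at most $d$ (\Cref{prop:tutte_def}), the expression $L(m)=m^dT_M((m+1)/m,1)$ is a polynomial in $m$. By the previous step it is the Ehrhart polynomial of $Z$. Ehrhart--Macdonald reciprocity gives
\[
|\mathrm{int}(mZ)\cap\Z^d|=(-1)^dL(-m)=(-1)^d(-m)^dT_M\!\left(\frac{-m+1}{-m},1\right)=m^dT_M\!\left(\frac{m-1}{m},1\right).
\]
Setting $m=1$ yields the two stated special cases $T_M(2,1)$ and $T_M(0,1)$.
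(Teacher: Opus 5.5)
Your proposal is correct and is essentially the argument the paper attributes to Stanley: a half-open parallelepiped decomposition with one lattice point per piece by unimodularity, followed by Ehrhart--Macdonald reciprocity for the interior count; the paper itself only cites Stanley and gives no proof. Your one deviation is obtaining general $m$ from the $m=1$ case via $mZ = A(m)\cdot[0,1]^{mn}$ and \Cref{prop:thickening}, rather than dilating the pieces directly. This is \Cref{dilated-zonotope-vs-matroid} run in reverse, and it is not circular, since \Cref{prop:thickening} is an independent matroid identity.
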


\begin{remark}
    Stanley's proof of \Cref{thrm:stanley_ehrhart} relies on the construction of a half open decomposition of a zonotope and the valuativity of lattice point counts. In contrast, graded lattice point counts are \emph{not} valuative \cite[Section 5.1]{RR24} and our proof of \Cref{thrm:q-ehrhart} does not make use of a half of open decomposition. Backman, Baker, and Yuen \cite{BBY19} give an alternative matroid-theoretic proof of \Cref{thrm:stanley_ehrhart} by giving an explicit combinatorial interpretation of the lattice points in a zonotope. It will be interesting to compare their methods with ours.
\end{remark}

\begin{remark}\label{dilated-zonotope-vs-matroid}
    Suppose that $Z = A \cdot [0,1]^n$ is a unimodular zonotope, and $L = 
    \rowspace(A)$. Let $A(m)$ be the matrix with columns indexed by $[m] 
    \times [n]$, whose $(i,j)$th column is the $j$th column of $A$. In this case,
    \[
        mZ = A(m) \cdot [0,1]^{mn}\quad \text{and} \quad L(m) = 
        \rowspace(A(m)).
    \]
    One can check that the matrix $A(m)$ is unimodular, and therefore 
    $mZ$ is a unimodular zonotope.

   Combining this observation with \Cref{thrm:stanley_ehrhart}, we see that 
   \begin{equation}\label{eq:thickening_eq}T_{M(m)}(2,1) = m^d 
    T_M(\frac{m+1}{m},1) \quad \text{and} \quad T_{M(m)}(0,1) = m^d 
    T_M(\frac{m-1}{m},1). \end{equation}
    In fact, \Cref{eq:thickening_eq} holds for all matroids $M$. This can be deduced from\Cref{prop:thickening}.
\end{remark}

We now review zonotopal algebras. Suppose that $L \subseteq \C^n$ 
is a linear subspace.

\begin{defn}\label{defn:zon}
    Given an element $v \in L$, write $m(v)$ for the number of coordinate 
    functions of $\C^n$ which are nonzero on $v$.
  Define an ideal $J_{L,k} := \langle v^{m(v)+k} \mid 0 \not=v \in L 
  \rangle \subseteq \Sym L$, and write $R_{L,k} := \Sym L / J_{L,k}$. We call 
  $R_{L,-1}$ the internal zonotopal algebra, $R_{L,1}$ the external 
  zonotopal algebra, and we write
  \[
      R_L^{\text{ext}} := R_{L,1},\ J_L^\text{ext} := J_{L,1},\ R_L^{\text{int}} := 
      R_{L,-1}\ \text{and}\ J_L^{\text{int}} := J_{L,-1}.
  \]
  When $L$ is the complexified rowspace of $A$, where $Z = A \cdot 
  [0,1]^n$ is a zonotope, we will write $Z$ in place of $L$ in the 
  above.
\end{defn}

\begin{remark}
The algebra $R_{L,1}$ appeared independently in 
\cite{postnikovAlgebrasCurvatureForms1999} and 
 \cite{wagnerAlgebraFlowsGraphs1998, wagnerAlgebrasRelatedMatroids1999}, 
and is also known as the Postnikov-Shapiro 
algebra. The 
ideal $J_{L,0}$ and its inverse system first appeared in approximation 
theory in 
\cite{deboorBsplinesParallelepipeds1982, deboorTwoPolynomialSpaces1991}. A 
theory containing the three cases $k=-1,0,1$ was developed in \cite{HR}, where the terms ``internal,'' 
``central,'' and ``external'' zonotopal algebra were coined. The family 
of algebras for arbitrary $k$ was studied by \cite{AP10} concurrently with 
\cite{HR}. See also the correction 
\cite{ardilaCorrectionCombinatoricsGeometry2015}. The central zonotopal algebra does not appear in our current work.
\end{remark}

\begin{thrm}[\cite{AP10,HR}\protect\footnotemark]\label{zonotopal-hilbert}
If $T_M(x,y)$ denotes the Tutte polynomial of the matroid $M$ of $L$, then
\[
\Hilb(R^\text{ext}_L, q) = q^{n-d}T_M(1+q,q^{-1})\quad \text{and} \quad 
\Hilb(R^\text{int}_L, q) = q^{n-d}T_M(0,q^{-1}).
\]
\end{thrm}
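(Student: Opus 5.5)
We prove both identities at once by deletion/contraction, comparing the Hilbert series of $R_{L,k}$ for $k=\pm1$ with the recursion of \Cref{prop:tutte_def}.

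\textbf{Normalization.} Write $F_k(L;q):=\Hilb(R_{L,k};q)$. It suffices to show that, for $k=\pm 1$, the series $F_k(L;q)$ satisfies the recursion
\[
F_k(L;q) = q\,F_k(L\setminus i;q) + F_k(L/i;q)
\]
whenever $i$ is neither a loop nor a coloop. Indeed, the claimed formulas $q^{n-d}T_M(1+q,q^{-1})$ and $q^{n-d}T_M(0,q^{-1})$ satisfy exactly this recursion. The deletion has ground set size $n-1$ and rank $d$, so its normalizing factor is $q^{n-1-d}$, and the extra factor $q$ restores $q^{n-d}$. The contraction has size $n-1$ and rank $d-1$, so its factor is already $q^{n-d}$.

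\textbf{Base case.} Here $M$ consists of $a$ coloops and $b$ loops, so $L=\C^a\oplus 0$ and $m(v)$ counts the nonzero coordinates of $v$ among the first $a$. For $k=1$ we claim $R_{L,1}\cong \C[y_1,\dots,y_a]/(y_1^2,\dots,y_a^2)$. The coordinate vectors give the relations $y_j^2$. Every other power $v^{m(v)+1}$ lies in this ideal: in $\C[y]/(y_j^2)$, any product of more than $m(v)$ linear forms supported on $m(v)$ variables vanishes. So $F_1(L;q)=(1+q)^a$, which agrees with $q^{b}\,(1+q)^a q^{-b}$.

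For $k=-1$, the coordinate vectors give $y_j^0=1\in J$, so $R_{L,-1}=0$ unless $a=0$, in which case it is $\C$. This agrees with $q^b\cdot 0^a q^{-b}$, using the convention $0^0=1$.

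\textbf{Inductive step.} We view $\Sym L$ as polynomial functions on $V=L^*$. The coordinate functionals $x_j|_L$, which are elements of $L$, cut out a hyperplane arrangement in $V$ whose matroid is $M$. Under this dictionary:
\begin{itemize}
\item deleting $i$ corresponds to forgetting the hyperplane $H_i=\{x_i=0\}$;
\item contracting $i$ corresponds to restricting the arrangement to $H_i$.
\end{itemize}
Using this, we build the sequence
\[
0 \to R_{L\setminus i,k}(-1) \xrightarrow{\ \cdot x_i\ } R_{L,k} \to R_{L/i,k} \to 0 .
\]
Its maps are the following.
\begin{itemize}
\item The right map is restriction of functions to $H_i$. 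It is surjective, and it sends $J_{L,k}$ into $J_{L/i,k}$: a vector $v$ restricts to a form whose multiplicity drops by exactly one when $x_i$ is nonzero on it and which is killed otherwise. The same mechanism gives the containment $x_i\,J_{L\setminus i,k}\subseteq J_{L,k}$ needed for the left map to be well defined.
\item The left map is multiplication by $x_i$. The multiplicity function for $L\setminus i$ is $m(v)$ minus the contribution of coordinate $i$, so the shift by one degree matches the extra factor of $x_i$.
\end{itemize}
The composite is zero, and $x_i$ generates the kernel of restriction to $H_i$ at the level of $\Sym$.

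\textbf{Main obstacle.} The hard part is exactness in the middle and injectivity on the left. Concretely, we must show that
\[
J_{L,k}+(x_i) \;=\; \text{preimage of } J_{L/i,k}
\qquad\text{and}\qquad
(J_{L,k}:x_i)\;=\;J_{L\setminus i,k}.
\]
The inclusions needed for a complex are formal. The reverse inclusions are not formal, because the ideals are generated by infinitely many powers.

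The plan is to pass to Macaulay inverse systems, where the inverse system of a power ideal is spanned by explicit products of linear forms. For $k=1$ these are products $\prod_{j\in S}x_j$ over $S\subseteq[n]$; for $k=-1$ they are products indexed by sets avoiding a circuit condition. For these explicit spaces we can exhibit spanning sets adapted to $i$: those products not involving $x_i$, and those divisible by $x_i$. This yields the inequality
\[
\dim R_{L,k}\le \dim R_{L\setminus i,k}+\dim R_{L/i,k}
\]
degree by degree. Combined with right-exactness, we then need a matching lower bound, which we expect to get by an independent count of the spanning products (for instance, the degree-$0$ and top-degree counts). That forces equality, hence exactness. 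This counting step is where we expect the real work, following the power-ideal analysis of Ardila--Postnikov and Holtz--Ron.
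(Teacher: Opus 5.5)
The paper does not prove this statement. It quotes it from \cite{AP10,HR}, and its footnote records that the deletion--contraction proof in \cite{AP10} is faulty in the internal case. Your proposal follows that same deletion--contraction strategy. Your normalization and base cases are fine, but as written there is a genuine gap. The step that carries all the content, the matching lower bound, is never supplied: ``degree-$0$ and top-degree counts'' cannot force equality degree by degree, and you defer the step to the literature. The sequence itself is also not well posed. The functional $x_i|_L$ lies in $L^\vee$, not in $L$, so neither multiplication by $x_i$ nor restriction to $H_i$ is defined on $R_{L,k}=\Sym L/J_{L,k}$. Suppose you realize restriction by projecting $L\to L/i$ along some $u\in L$ with $u_i\neq 0$. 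Then your claim that multiplicities drop by exactly one is false, and the ideals need not be respected. For example, take $L=\{(a,b,a+b,b)\}\subseteq\C^4$, $i=1$ and $u=(1,-1,0,-1)$. The vector $v=(1,0,1,0)$ has $m(v)=2$ but projects to $(0,1,1,1)$, so $v^3\in J_{L,1}$ is sent to $w^3\notin J_{L/1,1}=(w^4)$.

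The maps are canonical on the inverse systems $C_{L,k}:=J_{L,k}^\perp\subseteq\Sym L^\vee$, i.e.\ on polynomial functions on $L$. The left map is multiplication by $l_i=x_i|_L$ (with functions on $L\setminus i$ pulled back along $L\cong L\setminus i$), and the right map is restriction to the hyperplane $L/i=\ker l_i$. In that setting the difficulty sits opposite to where you place it.

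\emph{What is formal.} Well-definedness, injectivity on the left and exactness in the middle all follow directly. Use $\partial_v^N(l_ig)=l_i\,\partial_v^Ng+Nv_i\,\partial_v^{N-1}g$ with $N=m(v)+k$, together with a divisibility-by-$l_i$ argument: $l_ig\in C_{L,k}$ forces $g\in C_{L\setminus i,k}$. By induction this gives, coefficientwise and without any spanning sets, the \emph{upper} bounds $\Hilb(R_{L,1})\le q^{n-d}T_M(1+q,q^{-1})$ and $\Hilb(R_{L,-1})\le q^{n-d}T_M(0,q^{-1})$.

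\emph{What is not formal.} The only remaining point is surjectivity of the restriction $C_{L,k}\to C_{L/i,k}$, which is exactly the lower bound. For $k=1$ you can close it by running the same sequence on the span $P_L$ of all products $\prod_{j\in S}l_j$. This span lies in $C_{L,1}$ and restriction is visibly surjective on it, so $\Hilb(P_L)\ge q\Hilb(P_{L\setminus i})+\Hilb(P_{L/i})$; squeezing against the upper bound gives equality.

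For $k=-1$ this argument fails, and this is the point the footnote flags. The internal inverse system is in general not spanned by products of the $l_j$ (see \cite{ardilaCorrectionCombinatoricsGeometry2015}). Moreover, the lift to $L$ of an internal product $\prod_{j\in S}l_j$ for $L/i$ need not lie in $C_{L,-1}$: a cocircuit $\{i,j\}$ of $M$ with $j\in S$ obstructs it. So your plan for the internal formula rests on a false spanning claim, and a genuinely different input, such as the argument of \cite{HR}, is needed there.
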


\footnotetext{Both formulas are claimed in {\cite{AP10}}, however the 
proof is faulty in the case of $R_Z^\text{int}$. It is pointed out in 
the correction {\cite{ardilaCorrectionCombinatoricsGeometry2015}} that 
the formula still holds by the results of {\cite{HR}}.}

\begin{remark}\label{zonotopal-point-count}
    Note that taking the $q$ equal to $1$ limit in 
    \Cref{zonotopal-hilbert} shows that $\dim 
    R_Z^{\text{ext}}$ and $\dim R_{Z}^{\text{int}}$ count lattice points 
    and, respectively, internal lattice points in a unimodular zonotope, 
    hence the names ``external'' and ``internal''.
\end{remark}

We now give finite generating sets for the ideals $J_{L}^\text{ext}$ 
and $J_{L}^\text{int}$. If $v$ is a vector in $\mathbb{C}^n$, the support of $v$ is the set $\text{Supp}(v)= \{ i \in [n]: v_i \neq 0\}$. We will write $m(v)$ for the size of the support of $v$.
\begin{defn}
    A cocircuit vector of $L \subseteq \C^n$ is a nonzero vector $v \in 
    L$ whose support is minimal amoung the supports of all nonzero 
    vectors in $L$.
\end{defn}

Note that a cocircuit vector is determined up to scaling by 
$\text{Supp}(v)$, which is a cocircuit of $M$. Therefore the following 
generating sets for $J_L^\text{ext}$ and $J_L^\text{int}$ are finite.

\begin{lemma}[{\cite[Lemma 1]{ardilaCorrectionCombinatoricsGeometry2015}}]\label{lem:fin_gen_set}
    We have that $J_{L}^\text{ext}$ is generated by $v^{m(v)+1}$ and 
    $J_{L}^\text{int}$ is generated by $v^{m(v)-1}$ where $v$ ranges 
    over all equivalence classes of cocircuit vectors up to scaling.
\end{lemma}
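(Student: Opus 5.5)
The plan is to reduce to a statement about binary forms. We argue by strong induction on $m(v)$: every nonzero $v \in L$ has $v^{m(v)+k}$ in the ideal $J'_k$ generated by the cocircuit powers, for $k=\pm 1$. If $v$ is itself a cocircuit vector there is nothing to show.

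\emph{Reduction to a plane.} Suppose $v$ is not a cocircuit vector. Then there is a cocircuit vector $u\in L$ with $\mathrm{Supp}(u)\subsetneq \mathrm{Supp}(v)$. Put $W=\mathrm{span}(u,v)\subseteq L$, which is $2$-dimensional. The nonzero restrictions to $W$ of the coordinate functions determine finitely many lines $x_1,\dots,x_r\subseteq W$, namely their kernels. Let $n_j$ be the number of coordinate functions vanishing on $x_j$ but not on all of $W$. Let $N_0=\sum_j n_j$ be the number of coordinates not identically zero on $W$. A vector $y\in W$ off all $x_j$ has $m(y)=N_0$, and $m(x_j)=N_0-n_j$. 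Since $\mathrm{Supp}(u)\subsetneq\mathrm{Supp}(v)$, some coordinate kills $u$ but not $v$. Hence $v$ lies on no $x_j$, so $m(v)=N_0$, and each $x_j$ satisfies $m(x_j)<m(v)$.

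We also need $r\ge 2$. If $r=1$, then every coordinate nonvanishing on $W$ would kill $x_1$, so $m(x_1)=0$, which is impossible for a nonzero vector. Since $m$ is computed using the same coordinates in $W$ as in $L$, the induction hypothesis gives $x_j^{\,N_0-n_j+k}\in J'_k$ for each $j$. It therefore suffices to show that $v^{N_0+k}$ lies in the ideal of $\Sym W\cong\C[s,t]$ generated by the $x_j^{\,N_0-n_j+k}$.

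\emph{Apolarity for binary forms.} Set $D=N_0+k$. We show that the degree-$D$ part of this ideal is everything, using Macaulay duality on binary forms of degree $D$. For a linear form $\ell$ and $a\le D$, a degree-$D$ form $g$ in the dual variables annihilates $\ell^a\cdot\Sym^{D-a}W$ iff $\partial_\ell^{\,a} g=0$. This holds iff $g$ is divisible by $(\ell^\perp)^{D-a+1}$, where $\ell^\perp$ is the linear form vanishing at $\ell$.

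Apply this with $a=N_0-n_j+k$, so $D-a+1=n_j+1$. The orthogonal complement of the degree-$D$ part of the ideal consists of forms of degree $D$ divisible by $\prod_j (x_j^\perp)^{n_j+1}$, which has degree $N_0+r$. Since $r\ge 2$ and $k\le 1$, we have $N_0+r>D$, so this complement is zero. Thus the ideal contains all of $\Sym^D W$, in particular $v^{N_0+k}=v^{m(v)+k}$.

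The base case is the cocircuit vectors themselves, and the induction closes. The reverse inclusion is trivial, since cocircuit powers are among the defining generators of $J_{L,k}$.

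The main obstacle is the dimension count in the apolarity step: getting the divisibility exponent $D-a+1$ exactly right, and checking that the inequality $N_0+r>N_0+k$ holds for both $k=1$ and $k=-1$. It does, using $r\ge2$. I would double-check it on the example $U_{2,3}$ of \Cref{sec:example}, where $r=3$ and $n_j=1$.

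A secondary point is the existence of a cocircuit vector $u$ with strictly smaller support when $v$ is not minimal. This follows because every nonzero vector's support contains a minimal nonzero support, by finiteness.
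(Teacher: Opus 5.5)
Your proof is correct. It takes a different route from the paper, which gives no argument at all: the lemma is imported from Ardila--Postnikov's correction note. You give a self-contained proof instead. You use strong induction on $m(v)$, and reduce to the plane $W=\mathrm{span}(u,v)$ through a cocircuit vector $u$ with $\mathrm{Supp}(u)\subsetneq\mathrm{Supp}(v)$. You then apply apolarity for binary forms: the degree-$D$ orthogonal complement of $\langle x_j^{m(x_j)+k}\rangle$ consists of forms divisible by $\prod_j (x_j^\perp)^{n_j+1}$, which has degree $N_0+r>N_0+k=D$. The details check out:
\begin{itemize}
\item the $x_j$ are distinct lines, so the factors $x_j^\perp$ are pairwise coprime;
\item $0\le m(x_j)+k\le D-1$, because $m(x_j)\ge 1$, $k\ge -1$ and $n_j\ge 1$;
\item $r\ge 2$ is needed only for $k=1$.
\end{itemize}
Your check on $U_{2,3}$ also matches the paper's computation: three cubes in two variables kill degree $4$, giving $\Hilb(R^{\text{ext}})=1+2q+3q^2+q^3$.

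What your route buys over the citation is transparency about where $k$ enters. The same argument works verbatim for the central case $k=0$, and indeed for any $k\le 1$ for which all exponents $m(y)+k$ are nonnegative. What the citation buys is brevity.

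One sentence should be repaired. That $v$ lies on no $x_j$ (so $m(v)=N_0$) follows from the containment $\mathrm{Supp}(u)\subseteq\mathrm{Supp}(v)$. A coordinate not vanishing identically on $W$ is nonzero on $u$ or on $v$, hence on $v$. It does not follow from the existence of a coordinate killing $u$ but not $v$. That fact instead shows that $u,v$ are independent and that $\C u$ is one of the lines $x_j$.
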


The following proposition (\Cref{thrm:zon_orbit}) was first proved in \cite{RRT23} for graphical 
 zonotopes, in \cite{CDBHP} for the internal case, and is implicit in the 
 main theorems of \cite{HR}. We will give a direct proof for the 
 convenience of the reader. In the following, we say that a vector $v \in \Z^n$ is primitive if $\alpha v \not\in \Z^n$ for all $0<\alpha<1$.
\begin{lemma}\label{lem:facet_ineq}
    If $Z = A\cdot[0,1]^n$ is a unimodular zonotope, and $v \in \Z^n \cap L$ is a primitive cocircuit vector, then the inequalities
    \[
    \min \{\langle v,x\rangle \mid x \in Z\} \leq \langle v, - \rangle \quad \text{and}\quad \langle v, - \rangle \leq \max \{\langle v,x\rangle \mid x \in Z\}
    \]
    are facet inequalities for $Z$, and moreover
    \[
    \max \{\langle v,x\rangle \mid x \in Z\} - \min \{\langle v,x\rangle \mid x \in Z\} = m(v).
    \]
\end{lemma}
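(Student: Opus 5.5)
Write $L=\rowspace(A)$, so that $\langle v, Ae_j\rangle = v_j$ for $v\in L$ (identifying $L$ with functionals $y\mapsto y^TA$ on $\R^d$ pulled back). The idea is to compute the extrema of $\langle v,-\rangle$ on $Z$ directly, and then show that the face where the extremum is attained has dimension $d-1$.

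\textbf{Width.} Write $v = u^T A$ with $u\in\R^d$ and read $\langle v, x\rangle$ as $\langle u, x\rangle$ for $x = A\lambda \in Z$. Then $\langle u, A\lambda\rangle = \sum_j v_j\lambda_j$ with $\lambda\in[0,1]^n$. The maximum is $\sum_{v_j>0} v_j$ and the minimum is $\sum_{v_j<0} v_j$, so the width is $\sum_j |v_j|$. To conclude that this equals $m(v)$, it suffices to show that every nonzero entry of a primitive integral cocircuit vector is $\pm 1$.

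\textbf{Entries are $\pm1$.} This is the key step, and it uses unimodularity. Let $C=\text{Supp}(v)$ be the cocircuit; its complement $H$ is a hyperplane (flat of rank $d-1$). Choose $i\in C$ and a basis $B$ of $M$ with $B\setminus i \subseteq H$, which exists because $H$ has rank $d-1$ and $i\notin H$. Since $A_B$ is unimodular, its inverse is integral, so $A$ may be replaced by $A_B^{-1}A$ without changing $L$ or $Z$ up to a lattice isomorphism; hence assume $A_B=I$. Unimodularity then says every entry of $A$ is in $\{0,\pm1\}$, since each entry equals a maximal minor obtained by swapping one column. The row of $A$ indexed by $i$ lies in $L$, vanishes on $B\setminus i$, and hence vanishes on all of $H=\mathrm{cl}(B\setminus i)$. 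Because the cocircuit vector is unique up to scaling, this row is a multiple of $v$. Its entries are in $\{0,\pm1\}$ with $1$ in position $i$, so it is primitive, and therefore it equals $\pm v$. Thus $v_j\in\{\pm1\}$ on $C$, and the width is $|C|=m(v)$.

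\textbf{Facets.} The maximizing face is $\{A\lambda : \lambda_j=1 \text{ for } v_j>0,\ \lambda_j=0 \text{ for } v_j<0\}$. It is a translate of the zonotope generated by the columns $\{Ae_j : j\in H\}$. These columns span a space of dimension $\mathrm{rk}(H)=d-1$, so the face has dimension $d-1$ and is a facet. The minimizing face is handled identically.

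The main obstacle is the $\pm1$ step, specifically justifying the change of coordinates by $A_B^{-1}$ and the fact that the row indexed by $i$ realizes the cocircuit. I would verify this through the fundamental cocircuit of $i$ with respect to $B$.
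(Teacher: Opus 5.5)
Your proof is correct and follows the paper's outline: read off the maximum and minimum of $\langle v,-\rangle$ on $Z$ from the signs of the coordinates of $v$, then use unimodularity to force those coordinates into $\{0,\pm1\}$. The paper cites Ziegler (Theorem 7.16) for the facet claim and Tutte (Claim 5.42) for the $\pm1$ entries, whereas you prove both directly: the maximizing face is a translate of the zonotope on the columns indexed by the hyperplane $H$, and the row of $A_B^{-1}A$ indexed by $i$ is $\pm v$.
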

\begin{proof}
    By definition, $L$ is the rowspace of $A$, and $L^\vee$ is therefore identified with the column space of $A$. We therefore think of $Z \subseteq L^\vee$. The statement about facet inequalities follows from \cite[Theorem 7.16]{zieglerLecturesPolytopes1995}.
    For any $i 
    \not\in \text{Supp}(v)$, we have that $v$ is contained in the 
    hyperplane $H_i = L \cap \C^{[n]-i}$, and is therefore orthogonal to 
    the defining normal $A \cdot e_i \in L^\vee$. Therefore we may write 
    \[
        \text{Supp}(v) = S \sqcup T,\quad S = \{i \mid \langle v, A 
        \cdot e_i \rangle > 0\},\quad T = \{i \mid \langle v, A \cdot e_i 
        \rangle < 0 \}.
    \]
    The unimodularity assumption implies that $\langle 
    v, e_j \rangle \in \{-1,0,1\}$ for each $j$. See, for example, \cite[Claim 
    5.42]{tutteLecturesMatroids1965}. The vertices of $Z$ are obtained 
    as sums of subsets of the $A \cdot e_i$, so the largest possible 
    value of $\langle v, - \rangle$ on a vertex is $|S|$, the smallest 
    possible value is $-|T|$.
\end{proof}
\begin{prop}\label{thrm:zon_orbit}
    If $Z = A \cdot [0,1]^n$ is a unimodular zonotope, then
    \[
    R^\text{ext}_Z \cong \text{Orb}(Z \cap \Z^d)\quad \text{and} \quad 
R^\text{int}_Z \cong \text{Orb}(\text{int}(Z) \cap \Z^d).
    \]
\end{prop}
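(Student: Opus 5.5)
Both isomorphisms follow the same pattern: identify the lattice point set by facet inequalities, write down explicit polynomials vanishing on it, and compare dimensions.

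\textbf{Step 1: the lattice points and vanishing polynomials.} Identify $\Sym L$ with the coordinate ring $\C[x_1,\ldots,x_d]$ of the ambient space of $Z$. Under this identification, an element $v\in L$ is the linear form $\langle v,-\rangle$.

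By \Cref{lem:facet_ineq}, for each primitive cocircuit vector $v$ the form $\langle v,-\rangle$ takes integer values on $Z\cap\Z^d$. These values lie in an interval $[a_v,a_v+m(v)]$ with $a_v=\min_Z\langle v,-\rangle\in\Z$. Hence the polynomial
\[ f_v=\prod_{k=0}^{m(v)}\bigl(\langle v,-\rangle-a_v-k\bigr) \]
vanishes on $Z\cap\Z^d$, and its top-degree component is $v^{m(v)+1}$.

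For the interior points, $\langle v,-\rangle$ takes values in $[a_v+1,a_v+m(v)-1]$, since both defining inequalities are facet inequalities. The analogous product of $m(v)-1$ factors vanishes on $\text{int}(Z)\cap\Z^d$ and has top component $v^{m(v)-1}$.

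\textbf{Step 2: a surjection.} By \Cref{lem:fin_gen_set}, the elements $v^{m(v)\pm1}$ generate $J^{\text{ext}}_Z$ and $J^{\text{int}}_Z$ respectively. Scaling $v$ changes these generators only by nonzero constants, so we may take $v$ primitive. Step 1 therefore gives the inclusions
\[ J^{\text{ext}}_Z\subseteq\gr I(Z\cap\Z^d), \qquad J^{\text{int}}_Z\subseteq \gr I(\text{int}(Z)\cap\Z^d). \]
These yield graded surjections $R^{\text{ext}}_Z\twoheadrightarrow \text{Orb}(Z\cap\Z^d)$ and $R^{\text{int}}_Z\twoheadrightarrow\text{Orb}(\text{int}(Z)\cap\Z^d)$.

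\textbf{Step 3: equal dimensions.} A surjection of finite-dimensional vector spaces is an isomorphism once the dimensions agree, and it is enough to check total dimensions. Orbit harmonics gives $\dim\text{Orb}(\mathcal Z)=|\mathcal Z|$. By \Cref{zonotopal-hilbert} at $q=1$,
\[ \dim R^{\text{ext}}_Z=T_M(2,1), \qquad \dim R^{\text{int}}_Z=T_M(0,1). \]
By \Cref{thrm:stanley_ehrhart} with $m=1$, these equal $|Z\cap\Z^d|$ and $|\text{int}(Z)\cap\Z^d|$ respectively (see also \Cref{zonotopal-point-count}). So both surjections are isomorphisms.

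\textbf{Main obstacle.} The delicate point is Step 1: confirming that the bounds on $\langle v,-\rangle$ are integers, and that the interior count drops by exactly one at each end. Integrality follows from unimodularity, since $\langle v, Ae_i\rangle\in\{-1,0,1\}$ as shown in the proof of \Cref{lem:facet_ineq}. The drop at both ends holds because both inequalities are facet inequalities, so interior points satisfy them strictly. A degenerate case needs separate care: when $m(v)=1$ (the support of $v$ is a coloop), the interior generator is $v^0=1$. Then $J^{\text{int}}_Z$ is the unit ideal, and the claim requires that $\text{int}(Z)\cap\Z^d=\emptyset$. This does hold, because the width $m(v)=1$ leaves no integer value strictly between the two facets.
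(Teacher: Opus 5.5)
Your proposal is correct and follows essentially the same route as the paper. In both, products of consecutive linear factors in a primitive cocircuit functional put $v^{m(v)\pm 1}$ into $\gr I$, \Cref{lem:fin_gen_set} turns this into a surjection from the zonotopal algebra onto the orbit harmonics ring, and the dimension count from \Cref{zonotopal-hilbert} and \Cref{thrm:stanley_ehrhart} forces it to be an isomorphism. Your separate handling of the coloop case $m(v)=1$, where $J^{\text{int}}_Z$ is the unit ideal and $\text{int}(Z)\cap\Z^d=\emptyset$, makes explicit a point the paper leaves implicit.
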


\begin{proof}
Let $v\in \Z^n\cap L$ be a primitive cocircuit vector. By definition, $L$ is the row space of $A$, and $L^\vee$ is therefore identified with the column space of $A$. We use this identification to view $v$ as a function on $Z$. Let $\alpha_{\min}$ and $\alpha_{\max}$ be the minimum and maximum values the function $v\in \Sym(L)$ takes on $Z$. The polynomial
\[        (v - \alpha_{\min})(v - \alpha_{\min}+1) \ldots (v+\alpha_{\max} - 1)(v + \alpha_{\max}) \in \Sym L \]
vanishes on $Z\cap \Z^d$. Therefore the initial form $v^{\alpha_{\max}-\alpha_{\min}+1}$ is a relation of $\text{Orb}(Z \cap \Z^d)$. By \Cref{lem:facet_ineq}, $\alpha_{\max}-\alpha_{\min}+1= m(v)+1$. By \Cref{lem:fin_gen_set}, this implies that $\text{Orb}(Z \cap \Z^d)$ is a quotient of $R_Z^{\text{ext}}$. By the definition of $\text{Orb}(Z\cap \Z^d)$ and by \Cref{zonotopal-point-count}, both algebras have dimension equal to $|Z \cap \Z^d|$. Therefore they are isomorphic.

The internal case is similar. By \Cref{lem:facet_ineq}, if $v\in \Z^n\cap L$ is a primitive cocircuit vector, then the polynomial
    \[        (v - \alpha_{\min}+1)(v - \alpha_{\min}+2) \ldots (v+\alpha_{\max} - 2)(v + \alpha_{\max}-1) \in \Sym L \]
    with initial form $v^{m(v)-1}$, vanishes on $\text{int}(Z) \cap 
    \Z^d$, and thus the internal case follows as in the external case.
\end{proof}

We will also need the following spanning set for the inverse system 
(\Cref{inverse-system}) of 
$J_Z^\text{ext}$, which was proved independently in \cite[Proposition 
4.21]{AP10} and \cite[Theorem 4.10(5)]{HR}. The 
spanning set itself appeared independently as the main object of study 
in \cite{bergetProductsLinearForms2010}.

\begin{prop}\label{external-gens}
    $J_{L,1}^\perp \subseteq \Sym L^\vee$ is spanned by 
    square free products $\prod_{i \in S} l_i \in \Sym L^\vee$, where $l_i: L \to \C$ is 
    the restriction of the $i$th coordinate function of $\C^n$, and $S 
    \subseteq [n]$.
\end{prop}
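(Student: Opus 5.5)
The plan is to establish two things: that every squarefree product $P_S := \prod_{i\in S} l_i$ lies in $J_{L,1}^\perp$, and that the span $W(L)$ of all $P_S$ has dimension at least $\dim R_L^{\text{ext}}$. Since $J_{L,1}^\perp$ is naturally dual to $R_L^{\text{ext}}=\Sym L/J_{L,1}$, these force $W(L)=J_{L,1}^\perp$.

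\emph{Containment.} Because $J_{L,1}$ is an ideal, $f\in J_{L,1}^\perp$ if and only if $g(D)f=0$ for every generator $g$. (If $g(D)f=0$ then $(hg)(D)f=h(D)g(D)f=0$, and conversely $g(D)f$ must vanish at $0$ together with all of its derivatives.) So it suffices to check that $v^{m(v)+1}(D)P_S=0$ for each $0\neq v\in L$. Here $v(D)$ is the directional derivative in direction $v$. Each factor $l_i$ with $i\notin\text{Supp}(v)$ satisfies $l_i(v)=0$, so it is constant along $v$. Hence $P_S$, restricted to any line parallel to $v$, is a polynomial of degree at most $|S\cap \text{Supp}(v)|\le m(v)$. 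Taking $m(v)+1$ derivatives in direction $v$ therefore kills $P_S$.

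\emph{Lower bound on $\dim W(L)$.} We prove $\dim W(L)\ge T_M(2,1)$ by induction on $n$ using deletion/contraction.

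\begin{itemize}
\item \emph{Base case.} If every element is a loop or a coloop, then $L=\C^{C}$ for the set $C$ of coloops, and $l_i=0$ for loops. The nonzero $P_S$ are then all squarefree monomials in the $|C|$ coordinates. These are linearly independent, giving dimension $2^{|C|}=T_M(2,1)$.
\item \emph{Restriction map.} For $i$ neither a loop nor a coloop, consider restriction of functions from $L$ to the hyperplane $L/i=L\cap\{x_i=0\}$. It sends $P_S$ to the corresponding product for $L/i$ if $i\notin S$, and to $0$ if $i\in S$. So it maps $W(L)$ onto $W(L/i)$.
\item \emph{Kernel.} Since $i$ is not a coloop, $\pr_{[n]\setminus i}:L\to L\setminus i$ is an isomorphism. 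Pulling back along it identifies $W(L\setminus i)$ with the span of the $P_S$ for $S\not\ni i$. Multiplying by $l_i$ (a nonzero linear form, so an injective operation) embeds this span into the kernel of the restriction map, as the span of the $P_{S}$ with $i\in S$.
\item \emph{Conclusion.} Therefore $\dim W(L)\ge \dim W(L/i)+\dim W(L\setminus i)$. By \Cref{prop:tutte_def} and induction, $\dim W(L)\ge T_M(2,1)$.
\end{itemize}

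\emph{Finishing.} By \Cref{zonotopal-hilbert} at $q=1$, $\dim J_{L,1}^\perp=\dim R_L^{\text{ext}}=T_M(2,1)$. Combined with the containment $W(L)\subseteq J_{L,1}^\perp$, this gives equality.

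The main obstacle is the deletion/contraction step, specifically identifying the kernel correctly. One must check that the pullback along $L\cong L\setminus i$ really matches the $l_j$ for $j\ne i$, which needs $i$ not a coloop, and that restriction to $L/i$ matches the coordinate functions of $L/i$. Everything else is formal.
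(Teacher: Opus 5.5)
Your proof is correct. The paper gives no argument for this statement; it imports it from \cite[Proposition 4.21]{AP10} and \cite[Theorem 4.10(5)]{HR}. You instead prove it directly, in three steps:
\begin{itemize}
\item the containment $W(L)\subseteq J_{L,1}^\perp$, by the degree count along lines parallel to $v$;
\item the bound $\dim W(L)\ge T_M(2,1)$, by deletion/contraction (not being a coloop makes $\pr_{[n]\setminus i}$ injective on $L$, not being a loop makes $l_i\neq 0$, and $l_iP_S=P_{S\cup i}$ vanishes on $L/i$);
\item equality, from $\dim J_{L,1}^\perp=\dim R_L^{\text{ext}}=T_M(2,1)$.
\end{itemize}

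The only outside input is \Cref{zonotopal-hilbert}, which the paper takes from the same sources. So your argument is self-contained relative to the paper. To make it independent of the zonotopal literature, you would also need a direct proof of the upper bound $\dim R_L^{\text{ext}}\le T_M(2,1)$.

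Your deletion/contraction step is the function-side mirror of \Cref{lem:dim1-case}. There, $\iota(z_S)=z_{S\cup i}$ and $\pi$ give an \emph{upper} bound on $\dim(R_L)_1$. Your multiplication by $l_i$ and restriction to $L/i$ give a \emph{lower} bound on the span of the images $P_S$.

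This buys you something extra. The degree-$m$ part of the Segre coordinate ring of $Y_L$, viewed as functions on $L\cong L(m)$, is exactly $W(L(m))$. Hence your inequality applied to $L(m)$ alone yields the lower bound used to prove \Cref{prop:harmonic-pres}, without invoking \Cref{zonotopal-hilbert} or \Cref{multiplicity-free-thm}. What the cited result, or your final dimension count, adds is the identification of this span with the inverse system $J_{L,1}^\perp$. That identification is what links the geometry to $\H_Z$.
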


\subsection{Arrangement Schubert varieties}\label{sec:matroid-schubert}
\begin{defn}\label{ASV-def}
    The arrangement Schubert variety $Y_L$ of a linear subspace $L 
    \subseteq \C^n$ is the closure of $L$ under the embedding
    \[L \hookrightarrow \mathbb{C}^n \hookrightarrow (\P^1)^n\]
    where the second map is defined by sending a point $(a_1,\ldots, a_n)\in \C^n$ to the point $([a_1:1],\ldots,[a_n:1]) \in (\P^1)^n$.
\end{defn}

The arrangement Schubert variety is an important projective variety 
whose geometry is largely dictated by the matroid $M$ of $L \subseteq 
\C^n$. It was first studied concurrently by \cite{AB} and \cite{liImagesRationalMaps2018}, and was later used by \cite{huhEnumerationPointsLines2017} to prove the top heavy conjecture for realizable matroids. 

We now describe the defining multihomogeneous equations for $Y_L$ given in 
\cite{AB}.
\begin{defn}\label{circuit-equations}
    Given a circuit $C \subseteq [n]$ of $M$, there is a unique (up to 
    scaling) linear dependence $\sum_{i \in C} \alpha_{C,i}A_i = 0$.  We will fix a representative and define the 
    corresponding circuit equation as
    \[
        f_C := \sum_{i \in C} \alpha_{C,i} x_i \in \C[x_1, \ldots, x_n].
    \]
    Define the multihomogeneous circuit equations of $L$ to be
    \[
        \widetilde{f_C} := \sum_{i \in C} \alpha_{C,i} x_i \prod_{j 
        \in C\setminus i}y_j \in \C[x_1, \ldots, x_n, y_1, \ldots, y_n],
    \]
    and call the ideal $\widetilde{I_L}$ that they generate the 
    multihomogeneous circuit ideal.
\end{defn}

\begin{prop}[{\cite[Theorem 1.3(a)]{AB}}]
    The subvariety $Y_L \subseteq (\P^1)^n$ is cut out 
    by the multihomogeneous circuit equations $\widetilde{f_C}$ of $L$.
\end{prop}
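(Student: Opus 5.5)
The plan is to prove the statement set-theoretically by stratifying $(\P^1)^n$ according to which coordinates are at infinity. Use bihomogeneous coordinates $([x_i:y_i])_{i\in[n]}$, so that $L\hookrightarrow(\P^1)^n$ sends $a\mapsto([a_i:1])_i$. For $S\subseteq[n]$, let $U_S$ be the locally closed stratum where $y_i=0$ exactly for $i\in S$. On $U_S$ we have $x_i\neq 0$ for $i\in S$, and the finite coordinates $a_j=x_j/y_j$ are defined for $j\in F:=[n]\setminus S$. We will compare $Y_L\cap U_S$ with $V(\widetilde{I_L})\cap U_S$ for every $S$.

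\textbf{Step 1: the equations hold on $Y_L$.} On the dense subset $L$ we have $y_j=1$, so $\widetilde{f_C}$ restricts to $f_C$. This vanishes on $L$ because $f_C$ is the linear relation defining the circuit $C$. Hence each $\widetilde{f_C}$ vanishes on the closure $Y_L$, and $Y_L\subseteq V(\widetilde{I_L})$.

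\textbf{Step 2: describe $V(\widetilde{I_L})\cap U_S$.} Take a point $p$ in this set and a circuit $C$.
\begin{itemize}
\item If $|C\cap S|\geq 2$, every monomial of $\widetilde{f_C}$ contains some $y_j$ with $j\in S$, so the equation is vacuous.
\item If $C\cap S=\{i\}$, every term except the $i$th contains $y_i=0$. The equation therefore reduces to $\alpha_{C,i}x_i\prod_{j\in C\setminus i}y_j=0$. Since $x_i\neq 0$ and $y_j\neq 0$ for $j\in C\setminus i\subseteq F$, this is impossible. So $V(\widetilde{I_L})$ meets $U_S$ only when no circuit meets $S$ in exactly one element, that is, only when $F$ is a flat of $M$.
\item If $C\subseteq F$, dividing by $\prod_{j\in C}y_j$ gives $f_C(a)=0$ on the finite coordinates $a\in\C^F$.
\end{itemize}
The circuits contained in $F$ are the circuits of the restriction $M|F$, and their linear relations cut out $\pr_F(L)\subseteq\C^F$. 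Hence $V(\widetilde{I_L})\cap U_S$ is empty unless $F$ is a flat, and in that case it is $\{\infty\}^S\times\pr_F(L)$.

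\textbf{Step 3: show $Y_L$ contains every such stratum.} Fix a flat $F$ and a vector $w\in\pr_F(L)$, and choose $v\in L$ with $\pr_F(v)=w$. Consider $L\cap\C^{S}$, the vectors of $L$ supported in $S$; this realizes the contraction $M/F$. Because $F$ is a flat, $M/F$ has no loops, so no coordinate $i\in S$ vanishes identically on $L\cap\C^S$. A generic $u\in L\cap\C^S$ therefore has $u_i\neq 0$ for every $i\in S$. The curve $t\mapsto v+tu$ lies in $L$, keeps the coordinates in $F$ fixed at $w$, and sends every coordinate in $S$ to $\infty$ as $t\to\infty$. Its limit is the point $(\infty^S,w)$, which therefore lies in $Y_L$. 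Together with Steps 1 and 2, this gives $Y_L=V(\widetilde{I_L})$ as sets.

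\textbf{Main obstacle.} The delicate point is the identification in Step 2: that the linear equations $f_C$ for circuits $C\subseteq F$ cut out exactly $\pr_F(L)$, not something larger. This is the standard fact that the circuits of a restriction span its space of linear dependences, and it should be checked carefully. If a scheme-theoretic (ideal-level) statement is intended rather than the set-theoretic one, that is substantially harder. It would need a Gr\"obner or degeneration argument, or reducedness via multiplicity-freeness of the multidegree, and I would defer that part to \cite{AB}.
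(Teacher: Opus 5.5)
Your stratification argument is a correct and complete proof that $Y_L=V(\widetilde{I_L})$ as subsets of $(\P^1)^n$. The case split on $|C\cap S|$ is right, and ``no circuit meets $S$ in exactly one element'' is exactly the condition that $F$ is a flat. The point you flag as the main obstacle is standard: circuit vectors span $\ker A_F$, so the $f_C$ with $C\subseteq F$ cut out $(\ker A_F)^\perp=\pr_F(L)$. The curve $v+tu$ correctly uses that $M/F$ is loopless. You even recover the decomposition $Y_L=\bigsqcup_F\{\infty\}^{[n]\setminus F}\times\pr_F(L)$ over flats $F$.

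The gap is that the statement being cited and used is ideal-theoretic, and that is precisely the part you defer. The paper gives no proof of its own. It quotes \cite[Theorem 1.3(a)]{AB}, an ideal-level result: the $\widetilde{f_C}$ generate the multihomogeneous prime ideal $I(Y_L)$, and in \cite{AB} they even form a universal Gr\"obner basis of it. The paper uses this in \Cref{coordinate-ring-diagram}, where it identifies $\widetilde{I_L}$ with the kernel of $\C[x_i,y_i]\to\bigoplus_u H^0(Y_L,\O(u))$.

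Equality of zero loci determines $\widetilde{I_L}$ only up to radical and saturation by the irrelevant ideal $\bigcap_i\langle x_i,y_i\rangle$. A pointwise analysis cannot detect nilpotents or embedded components on the deep boundary strata, where many $\widetilde{f_C}$ vanish identically (your case $|C\cap S|\ge 2$). Given your Step 1, what is needed is the Hilbert function bound $\dim(\C[x_i,y_i]/\widetilde{I_L})_u\le\dim(\C[x_i,y_i]/I(Y_L))_u$ for all $u\in\Z_{\ge 0}^n$. This is exactly what the universal Gr\"obner basis statement of \cite{AB} delivers: for every term order, the initial terms of the multilinear forms $\widetilde{f_C}$ already generate the squarefree initial ideal of $I(Y_L)$. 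So the part you defer is the real content of the cited theorem, not a technical add-on.

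For perspective, the paper's main application needs only your Step 1. In the proof of \Cref{prop:harmonic-pres}, the chain $I_L^{\text{SE}}\subseteq\ker\varphi_L\subseteq\ker\bigl(\C[z_S]\to R(Y_L,\O(1,\ldots,1))\bigr)$, together with the dimension bound, forces equality throughout. However, the isomorphism $\C[x_i,y_i]/\widetilde{I_L}\cong\bigoplus_u H^0(Y_L,\O(u))$ asserted in \Cref{coordinate-ring-diagram} genuinely requires the ideal-level statement.
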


\begin{remark}\label{bigrading-1}
Consider the Segre embedding $\Phi: (\P^1)^n \to \P^{2^n-1}$ defined by 
sending a point $([a_1:b_1],[a_2:b_2],\ldots, [a_n:b_n])$ to the point 
$[c_S:S\subseteq [n]]$ where $c_S = \prod_{i\in S} a_i 
\prod_{j\in [n]\setminus S}b_j$. By composing the embeddings $Y_L 
\hookrightarrow (\P^1)^n \hookrightarrow \P^{2^n-1}$, we can consider 
$Y_L$ as a projective subvariety of $\P^{2^n-1}$.

As $Y_L$ is the closure of a linear space, it is closed under the 
$\mathbb{C}^*$-action on $(\mathbb{P}^1)^n$ where $t\in \mathbb{C}^*$ 
acts by $t\cdot ([a_1:b_1],[a_2:b_2],\ldots, [a_n:b_n]) = 
([t^{-1}a_1:b_1],[t^{-1}a_2:b_2],\ldots, [t^{-1}a_n:b_n])$. 
This action restricts to the inverse scaling action of $\C^*$ on 
$L$. If we equip $\P^{2^n-1}$ with the $\mathbb{C}^*$-action defined 
by $t\cdot [c_S]_{S\subseteq [n]} = [t^{-\vert S \vert} c_S : S\subseteq 
[n]]$, then all of the above embeddings become equivariant. Therefore the homogeneous coordinate ring of $Y_L\subseteq \P^{2^n-1}$ comes equipped with a bigrading. We use this bigrading in our statement of 
\Cref{thrm:geometry}.
\end{remark}

\subsection{Quantum integer-valued polynomials}\label{subsec:quantum}

In \Cref{sec:combinatorics}, we study the graded Ehrhart series of unimodular zonotopes using the theory of quantum integer-valued polynomials. Quantum integer-valued polynomials are a natural $q$-deformation of integer-valued polynomials and were systematically introduced and studied in \cite{HH}. Here, we present the basic definitions and properties of quantum integer-valued polynomials.

Given an integer $n$, the \emph{$q$-number} $[n]_q$ is defined as the rational function $(1-q^n)/(1-q)\in \mathbb{Q}(q)$. When $n\geq 0$, we also have that $[n]_q = 1+q+\ldots+q^{n-1}$. For a positive integer $n$, let $[n]_q! \coloneq [n]_q [n-1]_q \cdots [1]_q$ and define $[0]_q! \coloneq 1$. For a positive integer $n$ and integer $k$, the $q$-binomial coefficient is defined as $\binom{n}{k}_q \coloneq \frac{[n]_q!}{[n-k]_q![k]_q!}$ when $0\leq k \leq n$ and $\binom{n}{k}_q= 0$ when $k>n$ or $k<0$.

A polynomial $f(t;q)\in \mathbb{Q}(q)[t]$ is \emph{quantum integer-valued} if $f([n]_q;q) \in \mathbb{Z}[q,q^{-1}]$ for all $n\in \mathbb{Z}$. The set of quantum integer-valued polynomials can be equipped with the structure of a $\mathbb{Z}[q,q^{-1}]$-algebra. We denote this algebra by $\mathcal{R}_q$ and refer to it as the \emph{ring of quantum integer-valued polynomials}. The set
\[\mathcal{R}_q^+\coloneqq \left\{ f(t;q)\in \mathcal{R}_q: f([n]_q;q)\in \mathbb{Z}[q] \text{ for all positive integers } n \right\} \]
forms a sub $\Z[q]$-module of $\mathcal{R}_q$. We think of $\mathcal{R}_q^+$ as the ``positive part'' of $\mathcal{R}_q$.

\begin{defn}
  For a positive integer $k$, the $q$-binomial coefficient polynomial is equal to 
  \[ \qbinom{t }{ k } \coloneq \frac{t(t-[1]_q)\cdots (t-[k-1]_q)}{q^{\binom{k}{2}}[k]_q\cdots[1]_q}\in \mathcal{R}_q. \]
  We also define $\qbinom{t}{0} \coloneq 1$.
\end{defn}

 For a non-negative integer $n$, evaluating $ \qbinom{ t}{ k }$ at $t=[n]_q$ recovers the $q$-binomial coefficient
  \[ \qbinom{ [n]_q }{ k } = \binom{n}{k}_q. \]

  A fundamental insight into the ring $\mathcal{R}_q$ is that it is a free $\mathbb{Z}[q,q^{-1}]$-module generated by the \emph{$q$-binomial coefficient polynomials}. We state this observation following \cite[Proposition 1.2]{HH}. However, we note that Harman and Hopkins credit \cite[Theorem 14]{Bha97} and \cite[Chapter 2, Exercise 15]{CC97} for the result. In turn, \cite[Chapter 2, Exercise 15]{CC97} attributes the results to \cite[Proposition 2.2]{Gra90}.

\begin{prop} {\cite[Proposition 1.2]{HH}}\label{prop:HH}
The collection $\{ \qbinom{t}{ k}: k\geq 0\}$ forms a $\mathbb{Z}[q]$-basis of $\mathcal{R}_q^+$ and also forms a $\mathbb{Z}[q,q^{-1}]$-basis of $\mathcal{R}_q$.
\end{prop}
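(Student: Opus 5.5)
The plan is to prove the two statements together with a triangularity argument built on $q$-Newton interpolation at the nodes $[0]_q,[1]_q,[2]_q,\ldots$. It has three steps: membership, linear independence, and spanning.

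\textbf{Membership.} First I check that each $\qbinom{t}{k}$ lies in $\mathcal{R}_q^+\subseteq\mathcal{R}_q$.
\begin{itemize}
\item For $n\ge 0$, $\qbinom{[n]_q}{k}=\binom{n}{k}_q$. This is a polynomial in $q$ with nonnegative integer coefficients, by the standard $q$-Pascal recursion $\binom{n}{k}_q=\binom{n-1}{k-1}_q+q^k\binom{n-1}{k}_q$ and induction on $n$.
\item For negative arguments, use $[-n]_q=-q^{-n}[n]_q$ together with $[-n]_q-[j]_q=-q^{-n}[n+j]_q$. Substituting, the product $\prod_{j<k}([-n]_q-[j]_q)$ equals $(-1)^kq^{-kn}[n]_q[n+1]_q\cdots[n+k-1]_q$. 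Hence $\qbinom{[-n]_q}{k}=(-1)^k q^{-kn-\binom{k}{2}}\binom{n+k-1}{k}_q$, which lies in $\Z[q,q^{-1}]$.
\end{itemize}

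\textbf{Linear independence.} This is immediate, since $\qbinom{t}{k}$ has $t$-degree exactly $k$ and nonzero leading coefficient in $\Q(q)$.

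\textbf{Spanning.} Let $f\in\mathcal{R}_q$ have $t$-degree $D$. Because the $\qbinom{t}{k}$ form a $\Q(q)$-basis of $\Q(q)[t]$, we can write $f=\sum_{k=0}^D c_k\qbinom{t}{k}$ with $c_k\in\Q(q)$. Evaluating at $t=[n]_q$ for $n=0,1,\ldots,D$ gives
\[ f([n]_q;q)=\sum_{k\le n} c_k\binom{n}{k}_q. \]
This system is lower unitriangular over $\Z[q]$, because $\binom{n}{k}_q=0$ for $k>n$ and $\binom{n}{n}_q=1$. Solving inductively,
\[ c_n=f([n]_q;q)-\sum_{k<n}c_k\binom{n}{k}_q. \]
So if every value $f([n]_q;q)$ lies in $\Z[q,q^{-1}]$, then every $c_k$ does too. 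This proves the $\mathcal{R}_q$ statement: the $\qbinom{t}{k}$ span $\mathcal{R}_q$ over $\Z[q,q^{-1}]$, and combined with membership and independence they form a basis. Equivalently, the $c_k$ are iterated $q$-difference quotients of $f$, which could be written out explicitly but is not needed.

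\textbf{Main obstacle: the $\mathcal{R}_q^+$ statement.} The same argument yields $c_k\in\Z[q]$ as soon as $f([n]_q;q)\in\Z[q]$ for all $n\ge 0$. However, the definition of $\mathcal{R}_q^+$ only constrains positive $n$, so the value at the node $n=0$, namely $f(0)=c_0$, needs separate control.

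To handle it, I would first use the alternative nodes $n=1,\ldots,D+1$ and the shifted basis $g_k(t)=\qbinom{(t-1)/q}{k}$. These are defined so that $g_k([n]_q)=\binom{n-1}{k}_q$, because $([n]_q-1)/q=[n-1]_q$. In this basis the same unitriangular argument gives $f=\sum_k d_kg_k$ with $d_k\in\Z[q]$.

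It then remains to show that each $g_k$ lies in the $\Z[q]$-span of the $\qbinom{t}{j}$. I would try to obtain this from the inverse $q$-Pascal relation, written as a polynomial identity in $t$, and check by induction on $k$ that no negative powers of $q$ appear. This conversion is the step I expect to be delicate, since the naive inversion of Pascal introduces $q^{-k}$ factors.

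If that fails, I would instead check how \cite{HH} intend the range of $n$. If their $\mathcal{R}_q^+$ includes $n=0$, the unitriangular argument above already finishes the proof.
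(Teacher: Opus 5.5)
Your argument for $\mathcal{R}_q$ is correct and complete. It is the standard $q$-Newton interpolation proof; the paper gives no proof of its own and simply cites \cite{HH}. The three pieces all work: membership from $q$-Pascal and your formula $\qbinom{[-n]_q}{k}=(-1)^kq^{-kn-\binom{k}{2}}\binom{n+k-1}{k}_q$, independence by degree, and spanning from the unitriangular system at $[0]_q,\dots,[D]_q$.

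The step that fails is your primary route for $\mathcal{R}_q^+$. Rewriting the shifted polynomials $g_k$ in the $\Z[q]$-span of the $\qbinom{t}{j}$ is impossible already for $k=1$. No cleverer argument can rescue it, because under the literal reading ``$f([n]_q;q)\in\Z[q]$ for $n\geq 1$'' the statement is false. Indeed $[n]_q-1=q[n-1]_q$ for every $n\in\Z$, so $g_1=(t-1)/q$ satisfies $g_1([n]_q;q)=[n-1]_q$. This lies in $\Z[q,q^{-1}]$ for all $n$ and in $\Z[q]$ for all $n\geq 1$, so $g_1$ meets the stated definition of $\mathcal{R}_q^+$. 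Yet $g_1=q^{-1}\qbinom{t}{1}-q^{-1}\qbinom{t}{0}$, and by uniqueness of coefficients in the $\Q(q)$-basis this is not a $\Z[q]$-combination. The obstruction is exactly $c_0=f(0)$, as you suspected.

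So your fallback is not optional but forced: $\mathcal{R}_q^+$ has to be defined by $f([n]_q;q)\in\Z[q]$ for all $n\geq 0$, i.e.\ ``positive'' in the paper must be read as ``non-negative''. This is also how the paper actually uses it. \Cref{prop:ehrhart_poly} checks integrality for all non-negative $m$. The auxiliary $g$ in the proof of \Cref{thrm:ehrhart_series} has $g([0]_q;q)=\widetilde{i_Z}(1;q)$. With only $n\geq 1$, the $\Z[t,q]$ claim of \Cref{lem:formal_rationality} would already fail for $f=g_1$, whose series has constant term $-q^{-1}$.

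With the $n\geq 0$ convention stated explicitly, your unitriangular argument run over $\Z[q]$ instead of $\Z[q,q^{-1}]$ is a complete proof. Membership holds since $\binom{n}{k}_q\in\Z[q]$ for $n\geq 0$. You should drop the shifted-basis detour.
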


The ring $\mathcal{R}_q$ carries a \emph{bar involution} $\overline{\,\cdot\,}: \mathcal{R}_q\to\mathcal{R}_q$ \cite[Section $6$]{HH}. The bar involution is defined as the restriction of the involution $\mathbb{Q}(q)[t]\to\mathbb{Q}(q)[t]$ which sends $q\mapsto q^{-1}$ and $t\mapsto-qt$. After taking the $q$ equal to $1$ limit, this involution specializes to the map $\Q[t]\to \Q[t]$ which send $f(t)$ to $f(-t)$. Given $f(t;q)\in \Q(q)[t]$ and $n\in \Z$, let $f([n]_q;q^{-1})$ be the element of $\mathbb{Q}(q)$ obtained by applying the involution $q\mapsto q^{-1}$ to $f([n]_q;q)\in \mathbb{Q}(q)$. The bar involution is defined so that $\overline{f}([n]_q;q)=f([-n]_q;q^{-1})$. We can visualize this relationship by the commutative diagram:
  \begin{equation}      \label{diag:bar}\begin{tikzcd}
	{\mathbb{Q}(q)[t]} & &{\mathbb{Q}(q)} \\
	{\mathbb{Q}(q)[t]} & &{\mathbb{Q}(q)}
	\arrow["{f(t;q)\mapsto \overline{f}(t;q)}"', from=1-1, to=2-1]
        \arrow["{t\mapsto [n]_q}", from=1-1, to=1-3]
	\arrow["{q\mapsto q^{-1}}", from=1-3, to=2-3]
	\arrow["{t\mapsto [-n]_q}", from=2-1, to=2-3]
      \end{tikzcd}\end{equation}

    \section{Graded Ehrhart polynomials and quantum reciprocity}\label{sec:combinatorics}
    In this section, we prove a formal reciprocity statement for quantum integer-valued polynomials. We then apply this formal reciprocity statement to the case of unimodular zonotopes. 
    \subsection{Quantum Reciprocity}
    We are interested in the generating functions of quantum integer-valued polynomials. We collect a series of results here about the behavior of such generating functions. These results, and their proofs, are analogous to the facts that the generating functions of integer-valued polynomials are rational and obey a formal reciprocity law \cite[Theorem 4.1.1 and Proposition 4.2.3]{EC1}, \cite[Section 4.1]{crt}.

    In the proofs of the next two results, we will use repeatedly use the negative $q$-binomial theorem \cite[Chapter 3, Exercise 8]{SagComb}. 

    \begin{thrm}[The Negative $q$-Binomial Theorem]\label{thrm:qbinomial}
    For a fixed positive integer $n$,
\[        \sum_{k\geq 0} \binom{n+k-1}{k}_qt^k = \frac{1}{(1-t)(1-qt)\cdots (1-q^{n-1}t)}\,.\]
\end{thrm}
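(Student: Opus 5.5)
We prove the identity
\[
\sum_{k\geq 0} \binom{n+k-1}{k}_q t^k = \prod_{j=0}^{n-1}\frac{1}{1-q^jt}
\]
by induction on $n$, using the $q$-Pascal recurrence. The base case $n=1$ reads $\sum_{k\ge 0}\binom{k}{k}_q t^k = \sum_k t^k = 1/(1-t)$, which is the geometric series. Write $F_n(t)\coloneqq\sum_{k\ge0}\binom{n+k-1}{k}_q t^k$; the goal is then $F_n(t)=\prod_{j=0}^{n-1}(1-q^jt)^{-1}$.

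\textbf{Step 1: the recurrence.} We use the $q$-Pascal identity in the form
\[
\binom{N}{k}_q=\binom{N-1}{k-1}_q+q^{k}\binom{N-1}{k}_q,
\]
which follows directly from the definition via $[N]_q=[k]_q+q^k[N-k]_q$. Apply it with $N=n+k-1$, so that $\binom{N-1}{k}_q=\binom{(n-1)+k-1}{k}_q$ and $\binom{N-1}{k-1}_q=\binom{n+(k-1)-1}{k-1}_q$. Multiplying by $t^k$ and summing over $k$ gives
\[
F_n(t)=tF_n(t)+F_{n-1}(qt).
\]
Here the first term comes from shifting the index $k\mapsto k+1$, and the factor $q^k t^k=(qt)^k$ turns the second term into $F_{n-1}$ evaluated at $qt$.

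\textbf{Step 2: the induction.} Solving the recurrence gives $F_n(t)=\frac{1}{1-t}F_{n-1}(qt)$. By the inductive hypothesis,
\[
F_{n-1}(qt)=\prod_{j=0}^{n-2}\frac{1}{1-q^{j+1}t}=\prod_{j=1}^{n-1}\frac{1}{1-q^jt},
\]
and multiplying by $\frac{1}{1-t}$ yields the claimed product. All manipulations take place in the formal power series ring $\Z[q][[t]]$, so no convergence issues arise.

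\textbf{Main obstacle.} The only delicate point is choosing the correct version of $q$-Pascal, since there are two: one has $q^k$ on the second term, the other has $q^{N-k}$ on the first. The version with $q^k$ on the second term is the one that produces $F_{n-1}(qt)$. This is the step to verify carefully, together with the edge conventions $\binom{N-1}{-1}_q=0$ and $\binom{n-2+k}{k}_q$ for $n=1$ (which is consistent with $F_0=1$). If that bookkeeping proved awkward, an alternative is to expand the right-hand side as a product of geometric series. The coefficient of $t^k$ is then the generating function $\sum q^{\lambda}$ over multisets of size $k$ from $\{0,\dots,n-1\}$, that is, partitions fitting in a $k\times(n-1)$ box, which is classically $\binom{n-1+k}{k}_q$.
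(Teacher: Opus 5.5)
Your proof is correct. The $q$-Pascal rule $\binom{N}{k}_q=\binom{N-1}{k-1}_q+q^k\binom{N-1}{k}_q$ is right. For $n\ge 2$ and $k\ge 0$ you have $N=n+k-1\ge k+1$, so the only edge term is $\binom{N-1}{-1}_q=0$ at $k=0$. The resulting relation $(1-t)F_n(t)=F_{n-1}(qt)$ in $\Z[q][[t]]$ then closes the induction from the base case $n=1$.

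The paper gives no argument of its own for this statement; it quotes it from Sagan's textbook (Chapter 3, Exercise 8). So there is no competing proof to compare against, and your write-up supplies a self-contained proof of a cited classical fact. Your fallback via partitions fitting in a $k\times(n-1)$ box is the other standard route and would work equally well.

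One small correction to your ``main obstacle'' remark: the choice of Pascal rule is not actually delicate. Take the other rule, $\binom{N}{k}_q=q^{N-k}\binom{N-1}{k-1}_q+\binom{N-1}{k}_q$, with $N=n+k-1$. Then the factor $q^{N-k}=q^{n-1}$ does not depend on $k$. Summing gives $F_n(t)=q^{n-1}tF_n(t)+F_{n-1}(t)$, that is, $F_n(t)=F_{n-1}(t)/(1-q^{n-1}t)$. This peels off the last factor of the product directly, with no substitution $t\mapsto qt$ needed.
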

In the $q$ equal to $1$ limit, the negative $q$-binomial coefficient theorem specializes to an expression for the generating function of the multiset coefficients $\binom{n+k-1}{k}= (-1)^k\binom{-n}{k}$.
After using the identity $\binom{n}{k}_q = \binom{n}{n-k}_q$ and rewriting, the negative $q$-binomial theorem gives the following corollary.
\begin{corollary}\label{cor:qbinomial}
For a fixed non-negative integer $k$,
\[\sum_{n\geq0}\binom{n}{k}_qt^n =  \frac{t^{k}}{(1-t)(1-qt)\cdots (1-q^{k}t)}\,. \]
\end{corollary}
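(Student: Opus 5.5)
The plan is to derive this directly from \Cref{thrm:qbinomial} by reindexing and using the symmetry of $q$-binomial coefficients. For $n\geq 0$ we have $\binom{n}{k}_q = \binom{n}{n-k}_q$ whenever $0\leq k\leq n$, and $\binom{n}{k}_q = 0$ when $n<k$. The terms with $n<k$ therefore vanish, and we may restrict the sum on the left to $n\geq k$. Writing $n = k + j$ with $j\geq 0$, the left-hand side becomes
\[
\sum_{j\geq 0}\binom{k+j}{k}_q t^{k+j} = t^k \sum_{j\geq 0} \binom{k+j}{j}_q t^j,
\]
where we used $\binom{k+j}{k}_q = \binom{k+j}{j}_q$.

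Next we apply \Cref{thrm:qbinomial} with its parameter $n$ replaced by $k+1$. This is a positive integer because $k\geq 0$. It gives
\[
\sum_{j\geq 0}\binom{(k+1)+j-1}{j}_q t^j = \frac{1}{(1-t)(1-qt)\cdots(1-q^{k}t)}.
\]
Multiplying by $t^k$ yields exactly the claimed identity.

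The only points needing care are routine. First, the vanishing convention $\binom{n}{k}_q = 0$ for $k>n$ justifies dropping the terms with $n<k$. Second, the symmetry $\binom{n}{k}_q=\binom{n}{n-k}_q$ is immediate from the factorial formula $\binom{n}{k}_q = \frac{[n]_q!}{[n-k]_q!\,[k]_q!}$. Third, the case $k=0$ is consistent: there the formula reads $\sum_{n\geq 0} t^n = 1/(1-t)$. I expect no real obstacle, since the argument is a reindexing of an identity between formal power series in $t$.
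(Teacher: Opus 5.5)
Your proposal is correct and follows essentially the paper's route: the paper also obtains this corollary from the negative $q$-binomial theorem (\Cref{thrm:qbinomial}) via the symmetry $\binom{n}{k}_q=\binom{n}{n-k}_q$ and a reindexing $n=k+j$, applying the theorem with parameter $k+1$. Your added remarks on the vanishing convention for $n<k$ and the $k=0$ case (which uses $\binom{0}{0}_q=1$) fill in details the paper leaves implicit.
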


\Cref{cor:qbinomial} tells us that the generating functions of the $q$-binomial coefficient polynomials are rational functions in $\mathbb{Q}(t,q)$. The following proposition uses this fact in order to prove that the generating function of any quantum integer-valued polynomial is a rational function in $\Q(t,q)$.
    \begin{prop}\label{lem:formal_rationality}
      Let $f(t;q)\in \mathcal{R}_q$ be of $t$-degree $d$. The generating function \[E(t,q) = \sum_{n\geq 0} f([n]_q;q) t^n\] is a rational function in $\mathbb{Q}(t,q)$. This rational function has the form
      \[\frac{N(t,q)}{\prod_{i=0}^d (1-tq^i)} \]
      where $N(t,q)\in \mathbb{Z}[t,q,q^{-1}]$ and $N(t,q)$ has $t$-degree at most $d$. If, in addition, $f(t;q)\in \mathcal{R}_q^+$, then $N(t,q)\in \mathbb{Z}[t,q]$.
\end{prop}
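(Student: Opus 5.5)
The plan is to expand $f$ in the $q$-binomial coefficient basis and reduce to \Cref{cor:qbinomial}. By \Cref{prop:HH}, since $f(t;q)\in\mathcal{R}_q$ has $t$-degree $d$, we can write
\[
f(t;q)=\sum_{k=0}^{d} c_k(q)\qbinom{t}{k},\qquad c_k(q)\in\Z[q,q^{-1}].
\]
The sum stops at $k=d$ because $\qbinom{t}{k}$ has $t$-degree exactly $k$. The expansion is therefore triangular, so uniqueness of the basis expansion forces $c_k=0$ for $k>d$. If moreover $f\in\mathcal{R}_q^+$, the same proposition (with $\Z[q]$-basis) gives $c_k(q)\in\Z[q]$.

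Next I evaluate at $t=[n]_q$ for $n\ge 0$, using $\qbinom{[n]_q}{k}=\binom{n}{k}_q$. This includes $n=0$: there $[0]_q=0$, and $\qbinom{0}{k}=0$ for $k\ge1$, matching $\binom{0}{k}_q=0$. Summing against $t^n$ and applying \Cref{cor:qbinomial} termwise gives
\[
E(t,q)=\sum_{k=0}^d c_k(q)\,\frac{t^k}{\prod_{i=0}^{k}(1-tq^i)}
=\frac{\sum_{k=0}^d c_k(q)\,t^k\prod_{i=k+1}^{d}(1-tq^i)}{\prod_{i=0}^{d}(1-tq^i)}.
\]
The interchange of summation is harmless as an identity of formal power series in $t$, since the sum over $k$ is finite.

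Set $N(t,q)=\sum_{k=0}^d c_k(q)\,t^k\prod_{i=k+1}^{d}(1-tq^i)$. Each summand has $t$-degree $k+(d-k)=d$, so $N$ has $t$-degree at most $d$. Its coefficients lie in $\Z[q,q^{-1}]$, and they lie in $\Z[q]$ when all $c_k\in\Z[q]$. This gives both the rationality statement and the refined integrality statement.

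There is no real obstacle here. The only points needing care are the degree bound in the basis expansion, which follows from triangularity, and the $n=0$ term, which is checked above.
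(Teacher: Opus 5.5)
Your proposal is correct and follows essentially the same route as the paper: expand $f$ in the $q$-binomial basis via \Cref{prop:HH}, evaluate at $t=[n]_q$, apply \Cref{cor:qbinomial} termwise, and clear denominators to get $N(t,q)=\sum_{k=0}^d c_k(q)\,t^k\prod_{i=k+1}^{d}(1-tq^i)$. Your two extra checks are details the paper leaves implicit: the triangularity argument for why the expansion stops at $k=d$, and the $n=0$ term.
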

\begin{proof}
  Suppose $f(t;q)$ has $t$-degree $d$. By \Cref{prop:HH}, we can expand $f(t;q)$ into the $q$-binomial coefficient polynomials as $f= \sum_{k=0}^d f_k(q) \qbinom{t}{k}$ where each $f_k(q)$ is an element of $\mathbb{Z}[q,q^{-1}]$. If $f(t;q)\in \mathcal{R}_q^+$, then we know that each $f_k(q)$ is instead inside of $\mathbb{Z}[q]$. Then
  \begin{align*}
    \sum_{n\geq0}f([n]_q;q)t^n &= \sum_{n\geq 0} \sum_{k=0}^d f_k(q)\qbinom{[n]_q}{k}t^n \\
                       &= \sum_{k=0}^d f_k(q)\sum_{n\geq 0}\binom{n}{k}_qt^n\\
                       &= \sum_{k=0}^d f_k(q)\frac{t^{k}}{(1-t)(1-tq)\cdots(1-tq^{k})}\\
                       &=\frac{\sum_{k=0}^d f_k(q)t^{k}(1-tq^{k+1})\cdots(1-tq^d)}{\prod_{i=0}^d (1-tq^i)}.
  \end{align*}
  The third equality of this chain of equalities follows from \Cref{cor:qbinomial}.
\end{proof}

We now prove a quantum reciprocity statement comparing the generating functions of $f([n]_q;q)$ and $\overline f([n]_q;q)$ for a quantum integer-valued polynomial $f(t;q)$. After taking the $q$ equal to $1$ limit, quantum reciprocity specializes to the usual reciprocity between the generating functions of $f(t)$ and $f(-t)$; see, for example, \cite[Theorem 4.1.6]{crt}.
\begin{prop}
      \label{prop:q-reciprocity}
  Let $f(t;q)\in \mathcal{R}_q$ be of $t$-degree $d$. The
generating functions
  \[E(t,q) = \sum_{n\geq0}f([n]_q;q)t^n \text{\quad and \quad} \overline{E}(t,q)=
\sum_{n\geq 1} \overline{f}([n]_q;q)t^n= \sum_{n\geq 1} f([-n]_q;q^{-1})t^n\] are
both rational functions in $\mathbb{Q}(q,t)$. These rational functions
can be written as
  \[E(t,q) = \frac{N(t,q)}{(1-t)(1-tq)\cdots (1-tq^d)} \quad \text{ and } \quad
\overline{E}(t,q) = \frac{\overline{N}(t,q)}{(1-t)(1-tq)\cdots (1-tq^d)} \]
where $N(t,q),\overline{N}(t,q)\in \Z[t,q,q^{-1}]$ are of $t$-degree at most $d$. Furthermore, $E(t,q)$ and $\overline{E}(t,q)$ are related by
$\overline{E}(t,q)= -E(t^{-1},q^{-1})$.
    \end{prop}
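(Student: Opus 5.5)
By \Cref{prop:HH}, $\overline{f}$ again lies in $\mathcal{R}_q$, since the bar involution preserves $\mathcal{R}_q$. So both $E$ and $\overline E$ have the stated rational form by \Cref{lem:formal_rationality}. The $t$-degree of $\overline{f}$ is still $d$, because $t\mapsto -qt$ only rescales the leading coefficient. For $\overline{E}$ the sum starts at $n=1$; subtracting the $n=0$ term $\overline{f}(0;q)\in\Z[q,q^{-1}]$ and clearing denominators keeps the numerator of $t$-degree at most $d+1$. This bound will be improved to $d$ by the reciprocity identity itself, or by a direct computation as below. It therefore suffices to prove $\overline{E}(t,q)=-E(t^{-1},q^{-1})$, and by $\Z[q,q^{-1}]$-linearity together with \Cref{prop:HH}, it suffices to prove it for $f=\qbinom{t}{k}$.

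For $f=\qbinom{t}{k}$ we have $\overline{f}([n]_q;q)=\qbinom{[-n]_q}{k}\big|_{q\mapsto q^{-1}}$. I would compute this explicitly. Using $[-n]_q-[j]_q=-q^{-n}[n+j]_q$, we get
\[\qbinom{[-n]_q}{k}=\frac{(-1)^kq^{-nk}[n]_q[n+1]_q\cdots[n+k-1]_q}{q^{\binom k2}[k]_q!}=(-1)^kq^{-nk-\binom k2}\binom{n+k-1}{k}_q.\]
Applying $q\mapsto q^{-1}$ and using $\binom{N}{k}_{q^{-1}}=q^{-k(N-k)}\binom Nk_q$ should give
\[\overline{f}([n]_q;q)=(-1)^k q^{\,\binom{k}{2}+nk-k(n-1)}\binom{n+k-1}{k}_q=(-1)^kq^{\binom{k+1}{2}}\binom{n+k-1}{k}_q.\]
The exponent bookkeeping here is the step most likely to go wrong, so I would double-check it at $k=1$: there $\overline{t}=-qt$ gives $-q[n]_q$, which matches.

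Then
\[\overline{E}=(-1)^kq^{\binom{k+1}2}\sum_{n\ge1}\binom{n+k-1}{k}_qt^n=(-1)^kq^{\binom{k+1}2}\sum_{m\ge0}\binom{m+k}{k}_qt^{m+1}.\]
By \Cref{thrm:qbinomial} with $k+1$ in place of $n$, this equals
\[\frac{(-1)^kq^{\binom{k+1}2}\,t}{\prod_{i=0}^k(1-q^it)}.\]
On the other side, \Cref{cor:qbinomial} gives
\[E(t,q)=\frac{t^k}{\prod_{i=0}^k(1-q^it)},\]
so
\[E(t^{-1},q^{-1})=\frac{t^{-k}}{\prod_{i=0}^k(1-q^{-i}t^{-1})}.\]
Multiplying numerator and denominator by $\prod_{i=0}^k q^it=q^{\binom{k+1}2}t^{k+1}$ turns each factor $1-q^{-i}t^{-1}$ into $q^it-1=-(1-q^it)$. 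This yields
\[E(t^{-1},q^{-1})=\frac{q^{\binom{k+1}2}\,t}{(-1)^{k+1}\prod_{i=0}^k(1-q^it)},\]
which is exactly $-\overline{E}$.

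This basis computation also shows that each summand of $\overline{E}$ has numerator of $t$-degree at most $1\le d$ over $\prod_{i=0}^k(1-q^it)$. Putting everything over the common denominator $\prod_{i=0}^d(1-tq^i)$ then gives $\overline N$ of $t$-degree at most $d$ with coefficients in $\Z[q,q^{-1}]$. The main obstacle is only the careful sign and $q$-power bookkeeping in the evaluation of $\overline{f}([n]_q;q)$.
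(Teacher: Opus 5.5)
Your proof of the reciprocity $\overline{E}(t,q)=-E(t^{-1},q^{-1})$ is correct and follows the paper's proof. Like the paper, you expand $f$ in the basis $\qbinom{t}{k}$, use $\overline{\qbinom{[n]_q}{k}}=(-1)^kq^{\binom{k+1}{2}}\binom{n+k-1}{k}_q$, and sum with the negative $q$-binomial theorem. The paper cites that evaluation from Harman--Hopkins; you re-derive it correctly from $[-n]_q-[j]_q=-q^{-n}[n+j]_q$. One small precision: the reduction to basis elements is not $\Z[q,q^{-1}]$-linear. Both $f\mapsto\overline{E}$ and $f\mapsto E(t^{-1},q^{-1})$ are semilinear with respect to $q\mapsto q^{-1}$, which is harmless because both sides twist the coefficients in the same way.

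The step that fails is your final degree count for $\overline{N}$. Put the $k$th summand $(-1)^kq^{\binom{k+1}{2}}f_k(q^{-1})\,t/\prod_{i=0}^k(1-tq^i)$ over the common denominator $\prod_{i=0}^d(1-tq^i)$. It acquires the factor $\prod_{i=k+1}^d(1-tq^i)$, so its numerator has $t$-degree $d-k+1$, which is $d+1$ when $k=0$. The $t^{d+1}$-coefficient of $\overline{N}$ is $(-1)^dq^{\binom{d+1}{2}}f_0(q^{-1})$, where $f_0=f(0;q)$.

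Reciprocity does not improve the bound either, contrary to your remark. Since the representation over the fixed denominator is unique, reciprocity says exactly $\overline{N}(t,q)=(-1)^dq^{\binom{d+1}{2}}t^{d+1}N(t^{-1},q^{-1})$. The simplest counterexample is $f=1$, $d=0$: then $\overline{E}=t/(1-t)$, and no $\overline{N}$ of $t$-degree $0$ exists.

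So this part of the statement is false whenever $f(0;q)\neq 0$, and no argument can rescue it. The correct conclusion, which your computation does prove, is that $\overline{N}$ is divisible by $t$ and has $t$-degree at most $d+1$. The paper's proof establishes only the reciprocity and never checks this bound. Its hexagon example in \Cref{sec:example}, where $\ehr_Z$ has $t$-degree $3$, indeed exhibits a numerator of $t$-degree $4$ for $\widetilde{E_Z}$.
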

    As part of the proof of \Cref{prop:q-reciprocity}, we will use the following lemma.
    \begin{lemma}{\cite[Proposition 6.3]{HH}}
      \label{lem:qbinom}
      For all non-negative integers $n$ and $k$,
      \[\overline{\qbinom{[n]_q}{k}} = (-1)^kq^{\binom{k+1}{2}}\binom{n+k-1}{k}_q\]
      where $\overline{\qbinom{[n_q]}{k}}$ is the polynomial $\overline{\qbinom{t}{k}}$ evaluated at $t$ equal to $[n]_q$.
    \end{lemma}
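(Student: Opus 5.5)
The plan is a direct computation: apply the bar involution to the explicit product formula for $\qbinom{t}{k}$, then evaluate at $t=[n]_q$. The case $k=0$ is trivial, since both sides equal $1$, so assume $k\geq 1$. By definition,
\[
\qbinom{t}{k}=\frac{\prod_{j=0}^{k-1}(t-[j]_q)}{q^{\binom{k}{2}}\prod_{i=1}^{k}[i]_q}.
\]
Applying $q\mapsto q^{-1}$ and $t\mapsto -qt$ turns this into a ratio whose numerator is $\prod_{j=0}^{k-1}(-qt-[j]_{q^{-1}})$ and whose denominator is $q^{-\binom{k}{2}}\prod_{i=1}^k[i]_{q^{-1}}$. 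The one identity needed throughout is
\[
[j]_{q^{-1}}=q^{-(j-1)}[j]_q,
\]
which is immediate from $[j]_q=1+q+\cdots+q^{j-1}$ for $j\geq 0$ (for $j=0$ both sides vanish).

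\emph{Numerator.} Substitute $t=[n]_q$ factor by factor. Each factor becomes
\[
-q[n]_q-q^{1-j}[j]_q=-q^{1-j}\bigl(q^j[n]_q+[j]_q\bigr)=-q^{1-j}[n+j]_q,
\]
using the standard identity $[j]_q+q^j[n]_q=[n+j]_q$. Multiplying over $j=0,\dots,k-1$ gives
\[
(-1)^k q^{\,k-\binom{k}{2}}\,[n]_q[n+1]_q\cdots[n+k-1]_q .
\]

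\emph{Denominator.} Using the same identity,
\[
q^{-\binom{k}{2}}\prod_{i=1}^{k}q^{-(i-1)}[i]_q=q^{-2\binom{k}{2}}[k]_q!.
\]

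\emph{Combining.} The ratio is
\[
(-1)^k q^{\,k+\binom{k}{2}}\,\frac{[n]_q\cdots[n+k-1]_q}{[k]_q!}.
\]
Since $k+\binom{k}{2}=\binom{k+1}{2}$, this is the claimed formula once the last fraction is identified with $\binom{n+k-1}{k}_q$.

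For $n\geq 1$ that identification is just the definition, written as $[n+k-1]_q!/([n-1]_q!\,[k]_q!)$. For $n=0$ the $j=0$ factor $[0]_q=0$ kills the left side, and $\binom{k-1}{k}_q=0$ kills the right side when $k\geq1$, so the formula holds there too.

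There is no real obstacle. The only points needing care are the exponent bookkeeping, namely the two sums $\sum_{j}(1-j)$ and $\sum_{i}(i-1)$, and the consistency check that the bar involution commutes with evaluation in the sense of \eqref{diag:bar}. The latter is automatic here, because we substitute $t=[n]_q$ only after the bar involution has been applied.
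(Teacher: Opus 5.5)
Your computation is correct. The two identities $[j]_{q^{-1}}=q^{1-j}[j]_q$ and $[j]_q+q^j[n]_q=[n+j]_q$ are exactly what is needed. The exponents check out: $k-\binom{k}{2}$ from the numerator and $-2\binom{k}{2}$ from the denominator give a net $\binom{k+1}{2}$. Your reading of the bar involution as $\overline{f}(t;q)=f(-qt;q^{-1})$ is also the right one.

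The paper gives no argument for this lemma; it simply imports it from \cite[Proposition 6.3]{HH}. Your factor-by-factor evaluation therefore makes the proof of \Cref{prop:q-reciprocity} self-contained at essentially no cost.

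An equally short alternative would go through the diagram \eqref{diag:bar}. There you rewrite the left side as the $q\mapsto q^{-1}$ image of $\qbinom{[-n]_q}{k}$. You then combine $[-m]_q=-q^{-m}[m]_q$ with the symmetry $\binom{N}{k}_{q^{-1}}=q^{-k(N-k)}\binom{N}{k}_q$. Your route has the advantage of never needing $q$-binomial coefficients with negative top index, which the paper does not define.

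One small caveat concerns the case $(n,k)=(0,0)$. The right-hand side there is $\binom{-1}{0}_q$, which lies outside the paper's definition, since that definition requires a positive top index. Your claim that ``both sides equal $1$'' tacitly adopts the generalized convention $\binom{N}{k}_q=\prod_{j=0}^{k-1}[N-j]_q/[k]_q!$. If one instead extended the paper's rule ``$\binom{N}{k}_q=0$ when $k>N$'' literally, this case would come out false. For $n=0$ and $k\geq 1$ both conventions give $0$, so only the $(0,0)$ case is affected.

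This is harmless for the paper. The lemma is used only with $n\geq 1$ in the proof of \Cref{prop:q-reciprocity}, and there your argument needs no conventions at all.
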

    After taking the $q$ equal to $1$ limit,  Lemma \ref{lem:qbinom} specializes to the statement that
    \[(-1)^k\binom{-n}{k} = \binom{n+k-1}{k}\,. \]
    This fact is a key ingredient in proving the formal reciprocity between the generating functions of $f(n)$ and $f(-n)$. Likewise, Lemma \ref{lem:qbinom} plays a vital role in our proof of \Cref{prop:q-reciprocity}.
    
    \vspace{1em}
    
    \noindent \textit{Proof of \Cref{prop:q-reciprocity}.} By \Cref{prop:HH}, we can write $f(t;q)$ as  $f(t;q) = \sum_{k=0}^d f_k(q)\qbinom{t}{k}$ where $f_k(q)\in \Z[q,q^{-1}]$. By \Cref{lem:formal_rationality}, and its proof, $E(t,q)$ is the rational function
\[E(t,q)= \sum_{k=0}^d f_k(q)\frac{t^{k}}{(1-t)(1-tq)\cdots(1-tq^{k})}.\]

We now focus on the rationality of $\overline{E}(t,q)$ and its relation with $E(t,q)$. Let $f_k(q^{-1})\in \Q(q)$ be the result of applying $q\mapsto q^{-1}$ to $f_k(q)$. By \Cref{lem:qbinom},
    \[\overline{f}([n]_q;q)= \sum_{k=0}^df_k(q^{-1}) \overline{\qbinom{[n]_q}{k}}= \sum_{k=0}^d(-1)^kq^{\binom{k+1}{2}}f_k(q^{-1})\binom{n+k-1}{k}_q. \]
  Thus we can rewrite the generating function $\overline{E}(t,q)$ as
    \begin{align*}
      \overline{E}(t,q) =\sum_{n\geq 1}\overline{f}([n]_q;q) t^n
      &= \sum_{n\geq 1}\left( \sum_{k=0}^d(-1)^kq^{\binom{k+1}{2}}f_k(q^{-1})\qbinom{n+k-1}{k}_q\right)t^n\\
      &= \sum_{k=0}^d(-1)^kq^{\binom{k+1}{2}}f_k(q^{-1})\sum_{n\geq 1}\qbinom{n+k-1}{k}_q t^n\\
            &= \sum_{k=0}^d(-1)^kq^{\binom{k+1}{2}}f_k(q^{-1})t\sum_{n-1\geq 0}\qbinom{(k+1)+(n-1)-1}{n-1}_q t^{n-1}\\
      &= \sum_{k=0}^d(-1)^kq^{\binom{k+1}{2}}f_k(q^{-1})\frac{t}{(1-t)(1-tq)\cdots(1-tq^k)}\\
      &= -\sum_{k=0}^df_{k}(q^{-1})\frac{t^{-k}}{(1-t^{-1})(1-t^{-1}q^{-1})\cdots(1-t^{-1}q^{-k})}\\
      &= -E(t^{-1},q^{-1}).
    \end{align*}
    The fourth equality of this chain of equalities follows from \Cref{thrm:qbinomial}.
    \qed

\subsection{Graded Ehrhart Polynomials}\label{subsec:ehrhart-poly}    
Let $Z=A\cdot [0,1]^n\subseteq \R^d$ be a unimodular zonotope. We now prove that the graded lattice point counts of $Z$ are evaluations of a quantum integer-valued polynomial $\ehr_Z(t;q)$. The polynomial $\ehr_Z(t;q)$ obeys a $q$-analogue of Ehrhart--Macdonald reciprocity.  These results, along with the machinery developed in the previous subsection, imply strong structural properties of the graded Ehrhart series of $Z$.

First, we show how to prove the formula for $i_Z(m;q)$ and $\widetilde{i_Z}(m;q)$ given in the introduction.
\begin{proof}[Proof of \Cref{thrm:q-ehrhart}] Let $Z = A \cdot [0,1]^n$ be a unimodular zonotope, and let $L\subseteq \C^n$ be the rowspace of $A$ over $\C$, with associated matroid $M$. For the first statement, we have 
    \begin{align*}
        i_{Z}(m;q) &= \Hilb(\text{Orb}(mZ \cap \Z^d), q) &&\text{by \Cref{defn:ehr},}\\
        &= \Hilb(R_{mZ}^\text{ext}, q) &&\text{by \Cref{thrm:zon_orbit},}\\
        &= \Hilb(R_{L(m)}^\text{ext}, q) &&\text{by \Cref{dilated-zonotope-vs-matroid},}\\
        &= q^{mn-d}T_{M(m)}(1+q,q^{-1}) &&\text{by \Cref{zonotopal-hilbert},}\\
        &= q^{mn-d}(q^{1-m}[m]_q)^d T_M\left(\frac{q^{1-m}[m]_q + q}{q^{1-m}[m]_q}, q^{-m}\right)
        &&\text{by \Cref{prop:thickening},}\\
        &= q^{(n-d)m}[m]_q^d T_M\left(\frac{[m+1]_q}{[m]_q},q^{-m}\right).
    \end{align*}
    The proof of the second statement about $\widetilde{i_Z}(m;q)$ follows in exactly the same way.
\end{proof}

\begin{prop}\label{prop:ehrhart_poly}
    There is a quantum integer-valued polynomial $\ehr_Z(t;q)\in \mathcal{R}_q^+$ of $t$-degree $n$ such that for all non-negative integers $m$, $\ehr_Z([m]_q;q)= i_Z(m;q)$. This polynomial satisfies a $q$-analogue of Ehrhart--Macdonald reciprocity: For all positive integers $m$,
    \[(-1)^dq^{-d}\overline{\ehr_Z}([m]_q;q)= (-1)^dq^{-d} \ehr_Z([-m]_q;q^{-1}) = \widetilde{i_Z}(m;q)\,. \]
  \end{prop}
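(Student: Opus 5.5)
The plan is to read $\ehr_Z(t;q)$ off the formula of \Cref{thrm:q-ehrhart} by rewriting every $m$-dependent quantity as a polynomial in $t=[m]_q$. Write $T_M(x,y)=\sum_{i,j} a_{ij}x^iy^j$ with $a_{ij}\in\Z_{\geq 0}$. By \Cref{prop:tutte_def}, $a_{ij}=0$ unless $i\le d$ and $j\le n-d$. Distributing the prefactor in \Cref{thrm:q-ehrhart} gives
\[
i_Z(m;q)=\sum_{i,j}a_{ij}\,[m+1]_q^{\,i}\,[m]_q^{\,d-i}\,(q^m)^{\,n-d-j},
\qquad
\widetilde{i_Z}(m;q)=\sum_{i,j}a_{ij}\,[m-1]_q^{\,i}\,[m]_q^{\,d-i}\,(q^m)^{\,n-d-j}.
\]
All exponents here are non-negative. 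Next I use the identities $[m+1]_q=1+q[m]_q$ and $q^m=1+(q-1)[m]_q$, both valid for every integer $m$. They suggest the definition
\[
\ehr_Z(t;q):=\sum_{i,j}a_{ij}\,(1+qt)^{i}\,t^{d-i}\,\bigl(1+(q-1)t\bigr)^{n-d-j}\in\Z[q][t],
\]
which satisfies $\ehr_Z([m]_q;q)=i_Z(m;q)$ for all $m\ge 1$. For $m=0$ both sides equal $1$: only $i=d$ contributes to $\ehr_Z(0;q)$, giving $\sum_j a_{dj}=T_M(1,0)$, which should equal $1$ when $M$ has no loops. That holds because the columns of $A$ are nonzero; otherwise $m=0$ is treated by convention.

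Quantum integer-valuedness is then immediate. The coefficients of $\ehr_Z$ lie in $\Z[q]$, and $[m]_q\in\Z[q,q^{-1}]$ for every integer $m$, with $[m]_q\in\Z[q]$ for $m\ge 0$. Hence $\ehr_Z\in\mathcal{R}_q^+$. For the $t$-degree, each summand has degree $d-i+i+(n-d-j)=n-j$, so the $t^n$ coefficient comes from the $j=0$ terms and equals $\sum_i a_{i0}\,q^i(q-1)^{n-d}$. This is nonzero because $T_M(x,0)$ is a nonzero polynomial with non-negative coefficients (it is nonzero in the loopless case), and evaluating it at $x=q$ therefore gives a nonzero element. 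I expect this degree bookkeeping, together with the role of loops, to be the only slightly delicate point, and I would simply remark on the loopless hypothesis.

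For reciprocity, I apply the bar involution $q\mapsto q^{-1}$, $t\mapsto -qt$ factor by factor:
\[
1+qt\mapsto 1-t,\qquad t\mapsto -qt,\qquad 1+(q-1)t\mapsto 1+(q^{-1}-1)(-qt)=1+(q-1)t .
\]
The last factor is therefore bar-invariant. Next I evaluate at $t=[m]_q$ for $m\ge 1$, using $1-[m]_q=-q[m-1]_q$ and again $1+(q-1)[m]_q=q^m$. This yields
\[
\overline{\ehr_Z}([m]_q;q)=\sum_{i,j}a_{ij}(-q)^i[m-1]_q^{\,i}\,(-q)^{d-i}[m]_q^{\,d-i}\,q^{m(n-d-j)}=(-q)^d\,\widetilde{i_Z}(m;q).
\]
Rearranging gives $(-1)^dq^{-d}\overline{\ehr_Z}([m]_q;q)=\widetilde{i_Z}(m;q)$. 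Finally, the middle equality $\overline{\ehr_Z}([m]_q;q)=\ehr_Z([-m]_q;q^{-1})$ is exactly the commutative diagram \eqref{diag:bar} defining the bar involution.
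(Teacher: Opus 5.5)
Your proof is essentially the paper's. Your $\ehr_Z(t;q)=\sum_{i,j}a_{ij}(1+qt)^i t^{d-i}(1+(q-1)t)^{n-d-j}$ is exactly the paper's $(1+(q-1)t)^{n-d}t^dT_M(\frac{qt+1}{t},\frac{1}{1+(q-1)t})$ expanded monomial by monomial. The evaluation and bar-involution steps use the same identities $1+q[m]_q=[m+1]_q$, $1+(q-1)[m]_q=q^m$ and $1-[m]_q=-q[m-1]_q$.

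Two of your departures improve on the paper:
\begin{itemize}
\item You get membership in $\mathcal{R}_q^+$ directly from the coefficients lying in $\Z[q]$, rather than citing Harman--Hopkins' Lemma 4.2.
\item You actually verify the $t$-degree. You correctly find that the leading coefficient is $(q-1)^{n-d}T_M(q,0)$, which vanishes when $M$ has loops. So ``$t$-degree $n$'' tacitly requires $M$ loopless; with $\ell$ loops the degree drops to $n-\ell$.
\end{itemize}

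One step is wrong as written: the $m=0$ check. The sum $\sum_j a_{dj}$ is the coefficient of $x^d$ in $T_M(x,1)$, not $T_M(1,0)$. Moreover, $T_M(1,0)$ is generally not $1$ for loopless $M$: for the paper's hexagon, $T_{U_{2,3}}=x^2+x+y$ gives $T_{U_{2,3}}(1,0)=2$.

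The correct fact is that the coefficient of $x^d$ in $T_M(x,y)$ is $y^{\ell}$, where $\ell$ is the number of loops. In the corank--nullity expansion, only subsets $A$ of loops have rank $0$, and $\sum_{A\subseteq \mathrm{loops}}(y-1)^{|A|}=y^{\ell}$. Hence $\sum_j a_{dj}=1$ unconditionally, and $\ehr_Z(0;q)=1=i_Z(0;q)$ with no looplessness hypothesis or convention needed.
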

  \begin{proof}
    We first construct the polynomial $\ehr_Z(t;q)\in \mathbb{Q}(q)[t]$ such that $\ehr_Z([m]_q;q) = i_Z(m;q)$ for all non-negative integers $m$. Define
\[\ehr_Z(t;q) \coloneqq (1+(q-1)t)^{n-d}t^d T_M(\frac{qt+1}{t}, \frac{1}{1+(q-1)t}). \]
The $x$ and $y$ degrees of $T_M(x,y)$ are bounded above by $d$ and $n-d$, respectively. This implies that $\ehr_Z(t;q)$ is an honest element of $\Q(q)[t]$. Furthermore, as $i_Z(m;q)\in \mathbb{Z}[q]$ for all non-negative integers $m$, \cite[Lemma 4.2]{HH} implies that $\ehr_Z(t;q)\in \mathcal{R}_q^+$. Using \Cref{thrm:q-ehrhart}, we calculate
\begin{align*}
  \ehr_Z([m]_q;q) &= (1+(q-1)[m]_q)^{n-d}[m]_q^d T_M(\frac{q[m]_q+1}{[m]_q}, \frac{1}{1+(q-1)[m]_q})\\
               &= q^{m(n-d)}[m]_q^d T_M(\frac{[m+1]_q}{[m]_q}, q^{-m})\\
               &= i_Z(m;q)\,.
\end{align*}
We now focus on proving the reciprocity statement. That \[(-1)^dq^{-d}\overline{\ehr_Z}([m]_q;q)= (-1)^dq^{-d} \ehr_Z([-m]_q;q^{-1})\] follows directly from the definition of the bar involution (\Cref{diag:bar}). Again, by the definition of the bar involution, we can compute
\begin{align*}
  (-1)^dq^{-d}\overline{\ehr_Z}(t;q) &= (-1)^dq^{-d} (1-(q^{-1}-1)qt)^{n-d}(-qt)^d T_M(\frac{-t+1}{-qt}, \frac{1}{1-(q^{-1}-1)qt})\\
   &= (1+(q-1)t)^{n-d}t^d T_M(\frac{q^{-1}(t-1)}{t}, \frac{1}{1+(q-1)t}).
\end{align*}
From this computation and \Cref{thrm:q-ehrhart}, it follows that
\[(-1)^dq^{-d}\overline{\ehr_Z}([m]_q;q) = q^{m(n-d)}[m]_q^d 
T_M(\frac{[m-1]_q}{[m]_q}, q^{-m}) = \widetilde{i_Z}(m;q).\qedhere\]
  \end{proof}

Combining \Cref{prop:ehrhart_poly} and \Cref{prop:q-reciprocity}, we obtain the following theorem.  

\begin{thrm}\label{thrm:ehrhart_series}
    The Ehrhart series $E_Z(t,q)$ and $\widetilde{E_Z}(t,q)$ are rational function in $\Q(t,q)$ and can be written as
    \[E_Z(t,q) = \frac{N_Z(t,q)}{(1-t)(1-tq)\cdots (1-tq^n)} \quad \text{ and } \quad \widetilde{E_Z}(t,q) = \frac{\widetilde{N_Z}(t,q)}{(1-t)(1-tq)\cdots (1-tq^n)}  \]
  where $N_Z(t,q),\widetilde{N_Z}(t,q)\in \Z[t,q]$ are of $t$-degree at most $n$. These functions exhibit q-analogue of Ehrhart--Macdonald reciprocity, namely,
\[q^d \widetilde{E_Z}(t,q) = (-1)^{d+1}E_Z(t^{-1}, q^{-1}).\]
\end{thrm}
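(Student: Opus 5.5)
Apply \Cref{prop:q-reciprocity} and \Cref{lem:formal_rationality} to the polynomial $f(t;q)=\ehr_Z(t;q)$ produced in \Cref{prop:ehrhart_poly}. That proposition gives $\ehr_Z\in\mathcal{R}_q^+$ of $t$-degree $n$, with $\ehr_Z([m]_q;q)=i_Z(m;q)$ for all $m\geq 0$. Hence $E_Z(t,q)=\sum_{m\ge0}\ehr_Z([m]_q;q)t^m$. By \Cref{lem:formal_rationality} (with $d$ there replaced by $n$), $E_Z$ is rational of the form $N_Z(t,q)/\prod_{i=0}^n(1-tq^i)$. Since $\ehr_Z\in\mathcal{R}_q^+$, we get $N_Z\in\Z[t,q]$ of $t$-degree at most $n$.

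Next I turn to the interior series. The reciprocity part of \Cref{prop:ehrhart_poly} says $\widetilde{i_Z}(m;q)=(-1)^dq^{-d}\,\overline{\ehr_Z}([m]_q;q)$ for all $m\ge1$. Therefore
\[
q^d\widetilde{E_Z}(t,q)=(-1)^d\sum_{m\ge1}\overline{\ehr_Z}([m]_q;q)t^m=(-1)^d\,\overline{E}(t,q).
\]
By \Cref{prop:q-reciprocity}, $\overline{E}(t,q)=-E_Z(t^{-1},q^{-1})$, which gives $q^d\widetilde{E_Z}(t,q)=(-1)^{d+1}E_Z(t^{-1},q^{-1})$. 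The same proposition shows $\overline{E}$ has denominator $\prod_{i=0}^n(1-tq^i)$ and numerator in $\Z[t,q,q^{-1}]$ of $t$-degree at most $n$. Multiplying by $(-1)^dq^{-d}$ keeps this form, so $\widetilde{E_Z}=\widetilde{N_Z}/\prod_{i=0}^n(1-tq^i)$ with $\widetilde{N_Z}\in\Z[t,q,q^{-1}]$.

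The only point needing care is that $\widetilde{N_Z}$ actually lies in $\Z[t,q]$, with no negative powers of $q$. I would argue as follows. Each $\widetilde{i_Z}(m;q)$ is a Hilbert series, so it lies in $\Z[q]$. Then $g(t;q):=(-1)^dq^{-d}\,\overline{\ehr_Z}(t;q)$ lies in $\mathcal{R}_q$, and it takes values in $\Z[q]$ at $[m]_q$ for every $m\ge1$. At $m=0$, $g([0]_q;q)=g(0;q)$ may not be in $\Z[q]$. To handle this, note that $\sum_{m\ge1}g([m]_q;q)t^m=\sum_{m\ge0}g([m]_q;q)t^m-g(0;q)$. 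Shifting, this equals $t\sum_{m\ge0}h([m]_q;q)t^m$, where $h(t;q):=g(qt+1;q)$, using $[m+1]_q=q[m]_q+1$.

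The polynomial $h$ has $t$-degree $n$ and takes $\Z[q]$ values at every $[m]_q$ with $m\ge0$. Provided $h\in\mathcal{R}_q$, this means $h\in\mathcal{R}_q^+$; I expect this to follow from \cite[Lemma 4.2]{HH}, the same criterion the paper used to show $\ehr_Z\in\mathcal{R}_q^+$. Then \Cref{lem:formal_rationality} applied to $h$ gives a numerator $N_h\in\Z[t,q]$ over $\prod_{i=0}^n(1-tq^i)$, so $\widetilde{N_Z}=tN_h$. This would have $t$-degree at most $n+1$, not $n$. The degree bound is better checked via the reciprocity relation instead. Since $E_Z(t^{-1},q^{-1})=N_Z(t^{-1},q^{-1})/\prod_{i=0}^n(1-t^{-1}q^{-i})$, clearing denominators gives
\[
\widetilde{N_Z}(t,q)=\pm\, t^{n+1}q^{\binom{n+1}{2}-d}\,N_Z(t^{-1},q^{-1}).
\]
This has $t$-degree at most $n+1$, with degree exactly $n$ precisely when $N_Z(0,q)$ vanishes. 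Since $N_Z(0,q)=E_Z(0,q)=1$, the sharp bound is $t$-degree at most $n+1$, with $t\mid\widetilde{N_Z}$. So I would record the polynomiality in $q$ via the $h$-argument, and treat the $t$-degree claim as holding for $\widetilde{N_Z}/t$, consistent with the hexagon example in \Cref{sec:example}.
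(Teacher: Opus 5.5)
Your proof follows the paper's route exactly. It applies \Cref{lem:formal_rationality} to $\ehr_Z$ to get $E_Z$, uses a shifted quantum integer-valued polynomial for $\widetilde{E_Z}$, and gets reciprocity from \Cref{prop:q-reciprocity} together with \Cref{prop:ehrhart_poly}. Your $h(t;q)=g(qt+1;q)$ is precisely the polynomial the paper obtains from \cite[Proposition 5.1]{HH}.

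Your objection to the $t$-degree bound on $\widetilde{N_Z}$ is correct, and the error is in the statement and the paper's proof, not in your argument. The paper's proof asserts $\sum_{m\geq 0} g([m]_q;q)t^m=\widetilde{E_Z}(t,q)$, but that sum is actually $t^{-1}\widetilde{E_Z}(t,q)$. Since $N_Z(0,q)=i_Z(0;q)=1$, reciprocity then forces $\deg_t \widetilde{N_Z}=n+1$ exactly. The hexagon of \Cref{sec:example} confirms this: there $n=3$ and the displayed numerator of $\widetilde{E_Z}$ has $t$-degree $4$. The true statement is that $t$ divides $\widetilde{N_Z}$ and $\widetilde{N_Z}/t\in\Z[t,q]$ has $t$-degree at most $n$.

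Two small cleanups.
\begin{itemize}
\item You do not need \cite[Lemma 4.2]{HH} to get $h\in\mathcal{R}_q$. Since $[m+1]_q=q[m]_q+1$ holds for every $m\in\Z$, we have $h([m]_q;q)=g([m+1]_q;q)\in\Z[q,q^{-1}]$ for all $m\in\Z$. This uses that $g=(-1)^dq^{-d}\,\overline{\ehr_Z}\in\mathcal{R}_q$, because the bar involution preserves $\mathcal{R}_q$. Then $h\in\mathcal{R}_q^+$ because its values at $[m]_q$, $m\geq 0$, are $\widetilde{i_Z}(m+1;q)\in\Z[q]$.
\item You should drop your appeal to \Cref{prop:q-reciprocity} for a $t$-degree bound of $n$ on $\overline{N}$. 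That bound is off by one for the same reason. The $k=0$ term in its proof contributes $f_0(q^{-1})\,t\prod_{i=1}^{n}(1-tq^i)$, which has $t$-degree $n+1$ whenever $f(0;q)\neq 0$, as happens for $f=\ehr_Z$.
\end{itemize}
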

\begin{proof}
Once we know that $E_Z(t,q)$ and $\widetilde{E_Z}(t,q)$ are the generating functions of elements in $\mathcal{R}_q^+$, the first statement follows from \Cref{lem:formal_rationality}. The $E_Z(t,q)$ case is handled directly by \Cref{prop:ehrhart_poly}. \Cref{prop:ehrhart_poly} also tells us that the evaluation of $(-q)^{-d} \overline{\ehr_Z}(t;q)$ at $t=[m]_q$ is equal to $\widetilde{i_Z}(m;q)$ for all positive integers $m$. By \cite[Proposition 5.1]{HH}, there is a quantum integer-valued polynomial $g(t;q) \in \mathcal{R}_q$ such that 
\[g([m]_q;q)= (-q)^{-d} \overline{\ehr_Z}([m+1]_q;q)\]
for all $m\in \Z$. Our construction ensures that $g(t;q)\in \mathcal{R}_q^+$ and that
\[\sum_{m\geq 0 } g([m]_q;q)t^m  = \widetilde{E_Z}(t,q). \]
Thus the first claim now follows.

The second claims follows from the reciprocity statements of \Cref{prop:q-reciprocity} and \Cref{prop:ehrhart_poly}.
\end{proof}

\section{Harmonic algebras and arrangement Schubert varieties}\label{sec:algebra}
We now move to establish the algebraic results in 
\Cref{thrm:geometry}, \Cref{thrm:CM-intro} and \Cref{thrm:pres-intro}. 
For the rest of this section, let $L \subseteq \C^n$ be a linear 
subspace, $Y_L$ be the arrangement Schubert variety (\Cref{ASV-def}), 
and $M$ be the matroid on $[n]$ associated to $L$. Note that, unless we specify it, we do not assume that $L$ is the linear subspace associated to a unimodular zonotope.

\begin{defn}
    Let $\O_{Y_L}(m, \ldots, m)$ denote the restriction to $Y_L$ of 
    $\O_{(\P^1)^n}(m, \ldots, m)$, where the latter is 
    obtained as the tensor product of the line bundles $\O_{\P^1}(m)$ 
    pulled back from each $\P^1$ factor. When there is no possibility of 
    confusion, we may simply write $\O(m,\ldots,m)$.
\end{defn}

Because $Y_L$ is defined as the 
closure of a linear space, it is a multiplicity free subvariety 
of $(\P^1)^n$, meaning that the $\N^n$-graded multidegree of $Y_L$ is a 
polynomial with all nonzero coefficients equal to one. We use \cite[Theorem 
1.3(c)]{AB} and \cite[Corollary 3.8]{liImagesRationalMaps2018} as 
references for this fact. Therefore we may conclude the following theorem as a special case of a more general result of Brion \cite[Theorem 1]{Brion}. To see that Brion's result applies, notice that $(\P^1)^n$ is the full flag variety of the semi-simple group $SL_2(\C)^n$.
\begin{thrm}\label{multiplicity-free-thm}
    The restriction map $H^0((\P^1)^n, \O_{(\P^1)^n}(1,\ldots,1)) \to H^0(Y_L, 
    \O_{Y_L}(1,\ldots,1))$ is surjective. The variety $Y_L$ is projectively normal and arithmetically Cohen-Macaulay with respect to $\O_{Y_L}(1, \ldots, 
    1)$. 
\end{thrm}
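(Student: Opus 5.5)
The plan is to deduce \Cref{multiplicity-free-thm} directly from Brion's theorem on multiplicity-free subvarieties of flag varieties \cite[Theorem 1]{Brion}. We take $G = SL_2(\C)^n$ with Borel subgroup $B$ the product of the upper triangular Borels. Then $G/B = (\P^1)^n$, and the line bundles $\O_{(\P^1)^n}(a_1,\ldots,a_n)$ are exactly the $G$-linearized line bundles $\mathcal{L}_\lambda$ attached to weights $\lambda=(a_1,\ldots,a_n)$. Globally generated bundles correspond to dominant weights, so $\O(1,\ldots,1)$ is the ample bundle attached to $\rho$. Brion's theorem says the following. Let $X \subseteq G/B$ be an irreducible subvariety whose cohomology class is a nonnegative combination of Schubert classes with coefficients in $\{0,1\}$. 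Then $X$ is normal and Cohen--Macaulay, and for every globally generated $\mathcal{L}$ the restriction $H^0(G/B,\mathcal{L}) \to H^0(X,\mathcal{L}|_X)$ is surjective and $H^i(X,\mathcal{L}|_X)=0$ for $i>0$.

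\textbf{Verifying the hypotheses.} First, $Y_L$ is irreducible, since it is the closure of the irreducible space $L \cong \C^d$. Second, the Schubert classes of $(\P^1)^n$ are the classes of the coordinate subproducts $\prod_{i\in S}\P^1 \times \prod_{i\notin S}\{\mathrm{pt}\}$. Under the identification $H^*((\P^1)^n) = \Z[h_1,\ldots,h_n]/(h_i^2)$, these are the squarefree monomials. Hence the class of $Y_L$ is its multidegree, which is multiplicity free by \cite[Theorem 1.3(c)]{AB} and \cite[Corollary 3.8]{liImagesRationalMaps2018}. Brion's theorem therefore gives surjectivity of the restriction map for $\O(1,\ldots,1)$, and also for every $\O(m,\ldots,m)$ with $m\ge 0$.

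\textbf{Projective normality.} The homogeneous coordinate ring of $(\P^1)^n$ under Segre is $\bigoplus_{m\ge0} H^0((\P^1)^n,\O(m,\ldots,m))$, generated in degree one. So the image of the coordinate ring of $\P^{2^n-1}$ in $H^0(Y_L,\O(m,\ldots,m))$ is all of it for each $m$. This shows that the Segre-homogeneous coordinate ring $R$ of $Y_L$ equals the section ring $\bigoplus_m H^0(Y_L,\O(m,\ldots,m))$. Since $Y_L$ is normal, this section ring is normal, which is projective normality.

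\textbf{Arithmetic Cohen--Macaulayness.} I would apply the standard criterion: for a projectively normal embedding of a Cohen--Macaulay variety, the cone is Cohen--Macaulay if and only if $H^i(Y_L,\O(m,\ldots,m))=0$ for $0<i<\dim Y_L$ and all $m\in\Z$. The vanishing for $m\ge 0$ is part of Brion's theorem. For $m<0$, Serre duality converts it into vanishing of $H^{d-i}(Y_L,\omega_{Y_L}(-m,\ldots,-m))$ for $0<d-i<d$.

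\textbf{Expected main obstacle.} I expect the negative-degree vanishing to be the main obstacle, since it is not literally among the standard listed conclusions of Brion's theorem. The first thing I would try is to use the fact that Brion in fact shows $X$ has rational singularities. Then Kawamata--Viehweg (or Grauert--Riemenschneider) vanishing on a resolution gives $H^i(X,\mathcal{L}^{-1})=0$ for $i<\dim X$ and ample $\mathcal{L}$, which is exactly the required vanishing. As a fallback, Brion's degeneration of $X$ to a reduced union of Schubert varieties, which is Cohen--Macaulay with Frobenius-split vanishing, should also give the negative-degree statement.
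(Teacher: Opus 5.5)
Your proposal is correct and is the paper's argument: check that $Y_L\subseteq (\P^1)^n = SL_2(\C)^n/B$ is multiplicity free using \cite[Theorem 1.3(c)]{AB} and \cite[Corollary 3.8]{liImagesRationalMaps2018}, then invoke \cite[Theorem 1]{Brion}. The paper takes arithmetic Cohen--Macaulayness directly from Brion's theorem rather than checking negative-degree vanishing separately, so the obstacle you flag does not arise there, but your rational-singularities plus Kawamata--Viehweg argument for that step is also valid.
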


\begin{remark}
    The special properties of multiplicity free subvarieties, including 
    \Cref{multiplicity-free-thm} and more generalhave been recently 
    applied in the study of matroid theory. See, for example, 
    \cite{bergetEquivariantKtheoryClasses2022, 
    bergetExternalActivityComplex2025, 
    EL23, EFL25,liuVanishingTheoremsWonderful2025}.
\end{remark}

We begin by exploring the many consequences of 
\Cref{multiplicity-free-thm}. Firstly, we note that projective normality 
implies that $Y_L$ is normal as a variety, so the sections of any line 
bundle may be represented as rational functions. As we will see in the 
following remark, the situation for $Y_L$ is even better, because it 
contains affine space as an open subset.

\begin{remark}\label{sections-subspace}
    Sections of line bundles on $Y_L$ admit a concrete interpretation as 
    polynomials on $L$. By 
    \cite[Theorem 2.5]{hassettGeometryEquivariantCompactifications1999}, 
    every line bundle $\mathcal{L}$ is represented uniquely by a divisor 
    $D$ supported 
    on the boundary $Y_L \setminus L$. A section $s \in H^0(Y_L, \mathcal{L})$ may be represented as a 
    rational function $f$ on $L$ such that $\text{div}(f) + D \geq 0$. By construction, $f$ has no 
    poles on $L$ and must be a polynomial. Therefore we can consider $H^0(Y_L, 
    \O_{Y_L}(m, \ldots, m)) \subseteq \Sym L^\vee$. 
\end{remark}

Next, we note that the arithmetic Cohen-Macaulay property in 
\Cref{multiplicity-free-thm} implies that $Y_L$ is Cohen-Macaulay as an 
algebraic variety, a fact which was first proved concurrently in 
\cite{AB,liImagesRationalMaps2018}. We will make use of 
the fact that $Y_L$ is Cohen-Macaulay in several ways. Firstly, recall the following well-known 
statement, for which we refer to \cite[Definition 8.0.20 and 
Theorem 9.0.9]{coxToricVarieties2011}.

\begin{prop}\label{canonical-sheaf}
    If $X$ is a normal and Cohen-Macaulay algebraic variety, then $X$ 
    admits a dualizing sheaf $\omega_X$, whose sections over a smooth 
    open set $U \subseteq X$ are top-degree algebraic differential 
    forms.
\end{prop}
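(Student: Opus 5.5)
We need to prove: a normal Cohen–Macaulay variety $X$ admits a dualizing sheaf $\omega_X$ whose sections over a smooth open $U$ are top-degree algebraic differential forms. This is a standard fact, and the plan is to assemble it from general duality theory rather than from anything specific to $Y_L$.

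First we produce the dualizing sheaf. We may assume $X$ is quasi-projective; this holds in our application since $Y_L$ is projective. Choose a closed embedding $X \hookrightarrow P$ into a smooth variety $P$, for instance an open subset of projective space, of dimension $N$. Let $c = N - \dim X$ and set
\[\omega_X := \mathcal{E}xt^{c}_{\O_P}(\O_X, \omega_P), \qquad \omega_P = \Omega^N_P .\]
Grothendieck duality identifies the dualizing complex of $X$ with $R\mathcal{H}om_{\O_P}(\O_X,\omega_P)[N]$. Because $X$ is Cohen–Macaulay, local duality at each point shows that this complex has cohomology in only the single degree $-\dim X$. Hence the dualizing complex is the sheaf $\omega_X$ placed in degree $-\dim X$, and this sheaf is coherent and Cohen–Macaulay. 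The main care needed here is checking that the construction is independent of the embedding and glues. That is standard via uniqueness of dualizing complexes up to shift and twist; we normalise using the embedding.

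Next we identify $\omega_X$ on the smooth locus. Let $U \subseteq X$ be smooth. There $X$ is a local complete intersection in $P$, cut out locally by a regular sequence $f_1,\ldots,f_c$. A Koszul resolution computation then gives
\[\mathcal{E}xt^c(\O_U,\omega_P) \cong \omega_P|_U \otimes \det(\mathcal{I}/\mathcal{I}^2)^\vee .\]
The conormal exact sequence $0 \to \mathcal{I}/\mathcal{I}^2 \to \Omega_P|_U \to \Omega_U \to 0$ is exact on the left because $U$ is smooth. Taking determinants yields $\omega_P|_U \cong \det(\mathcal{I}/\mathcal{I}^2)\otimes \Omega^{\dim X}_U$. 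Combining the two isomorphisms gives $\omega_X|_U \cong \Omega^{\dim X}_U$, so sections over $U$ are exactly top-degree algebraic differential forms.

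Finally, we use normality. Since $\omega_X$ is Cohen–Macaulay of full dimension, it satisfies Serre's condition $S_2$. Normality of $X$ means the singular locus has codimension at least $2$. Together these show that $\omega_X = j_*\Omega^{\dim X}_{X_{\mathrm{reg}}}$, where $j \colon X_{\mathrm{reg}} \hookrightarrow X$ is the inclusion of the regular locus. In other words, sections of $\omega_X$ over any open set are regular top forms on its smooth part; this is the form in which \cite{coxToricVarieties2011} states the result.

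The step I expect to be the main obstacle is the first one, namely the vanishing of the other $\mathcal{E}xt$ sheaves so that the dualizing complex really is a single sheaf. I would handle it via local duality for Cohen–Macaulay local rings.
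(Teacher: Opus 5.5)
Your argument is correct, but it does not follow the paper, which gives no proof and simply cites \cite[Definition 8.0.20 and Theorem 9.0.9]{coxToricVarieties2011}. The cited source reasons in the opposite direction from you. For a normal variety it \emph{defines} the canonical sheaf as $j_*\Omega^{\dim X}_{X_{\mathrm{reg}}}$, so the description of sections over smooth opens is true by definition. The real content is then the cited theorem that, for normal Cohen--Macaulay $X$, this sheaf is dualizing.

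Your route runs as follows:
\begin{itemize}
\item You start from the Grothendieck-duality construction $\mathcal{E}xt^{c}_{\O_P}(\O_X,\omega_P)$.
\item You use the Cohen--Macaulay property at each closed point to see that it is a single Cohen--Macaulay sheaf.
\item You identify it with $\Omega^{\dim X}$ on the smooth locus by the Koszul/conormal computation.
\item You use $S_2$ together with $R_1$ to conclude $\omega_X\cong j_*\Omega^{\dim X}_{X_{\mathrm{reg}}}$.
\end{itemize}
In effect you reprove the cited theorem, along the lines of Hartshorne's \emph{Algebraic Geometry}, III.7.

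The citation buys brevity and full generality. Your approach buys a self-contained argument. It also makes explicit the $j_*$ (in particular torsion-free) description that \Cref{forms-subspace} actually relies on: that restricting global sections of $\O(m,\ldots,m)\otimes\omega_{Y_L}$ to the open set $L$ is injective.

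There are two caveats.
\begin{itemize}
\item Your proof only covers quasi-projective $X$. That is enough for $Y_L$, which is projective, but it is narrower than the proposition as stated.
\item Your remark about independence of the embedding and gluing is unnecessary in the quasi-projective case. It would also be insufficient in general: gluing local dualizing complexes requires the canonical compatibilities of the $f^!$ formalism, not merely uniqueness up to shift and twist.
\end{itemize}
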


\begin{remark}\label{forms-subspace}
By restricting to the open set $L \subseteq Y_L$, \Cref{canonical-sheaf} implies that elements of $H^0(Y_L, \O(m, \ldots, m) 
\otimes_{\O_{Y_L}} \omega_{Y_L})$ can be represented by expressions $f 
\cdot dl_1 \wedge \cdots \wedge dl_d$, where $l_1, \ldots, l_d$ is a 
basis for $L^\vee$, and $f \in \Sym L^\vee$. Thus, we have embeddings
\begin{align*}
    H^0(Y_L, \O(m, \ldots, m) \otimes_{\O_{Y_L}} \omega_{Y_L}) &\subseteq 
    \det(L^\vee) \otimes_{\C} \Sym L^\vee,\quad \text{equivalently}\\
    \det(L) \otimes_{\C} H^0(Y_L, \O(m, \ldots, m) \otimes_{\O_{Y_L}} 
    \omega_{Y_L}) &\subseteq \Sym L^\vee.
\end{align*}
\end{remark}

We now recall a geometric interpretation of the inverse system 
$J_{L,-1}^\perp$ proved in \cite[Theorem 1.14 and Proposition 4.4]{CP}.

\begin{prop}\label{internal-gens}
 As graded subspaces of $\Sym L^\vee$,
    \[
        \det(L) \otimes_{\C} H^0(Y_L, \O_{Y_L}(1, \ldots, 1) \otimes_{\O_{Y_L}} \omega_{Y_L}) 
        = (J_{L,-1})^\perp.
    \]
\end{prop}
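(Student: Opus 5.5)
The plan is to compute both sides as subspaces of $\Sym L^\vee$ by means of pole-order conditions along the boundary divisors of $Y_L$, and then to show that, cocircuit by cocircuit, the two conditions coincide.

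\textbf{Step 1: the boundary divisors.} I first describe the irreducible components of $Y_L\setminus L$. A point of the boundary has some coordinates equal to $[1:0]$, and the stratum where the set of infinite coordinates is $S$ is nonempty only when $[n]\setminus S$ is a flat of $M$. I expect the boundary divisors $D_H$ to be indexed by hyperplanes $H$ of $M$, that is, by cocircuits $C=[n]\setminus H$. A generic point of $D_H$ is reached by moving to infinity along the line spanned by a cocircuit vector $v\in L$ with $\text{Supp}(v)=C$, while staying finite in the directions of $L\cap \C^{H}$ and its translates. Concretely, I choose linear coordinates on $L$ of the form $s, w_2,\dots,w_d$. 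Here $w_2,\dots,w_d$ are among the coordinate functions $l_j$ with $j\in H$; these vanish on $v$ and span $(L/\C v)^\vee$. The coordinate $s$ is chosen so that $s(v)=1$. Near a generic point of $D_H$, the function $u=1/s$ together with the $w_k$ forms local coordinates, and $D_H=\{u=0\}$. In these coordinates each $l_i$ with $i\in C$ equals a nonzero multiple of $s$ plus a linear form in the $w$'s. The step I expect to be the main obstacle is justifying this local picture rigorously, and in particular proving that $Y_L$ is smooth in codimension one along each $D_H$. I would first try to deduce it from the explicit circuit equations $\widetilde{f_C}$ of \cite{AB}, restricted to the affine chart where the coordinates in $C$ are inverted.

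\textbf{Step 2: pole orders.} The divisor of $\O(1,\dots,1)$ is the sum over $i$ of the pullback of $\infty$ in the $i$th factor. Along $D_H$ this restricts with multiplicity $|C|=m(v)$, because precisely the coordinates $l_i$ with $i\in C$ blow up, each to first order in $s$. For the top form we have $dl_1\wedge\cdots\wedge dl_d = c\, ds\wedge dw_2\wedge\cdots\wedge dw_d$, and $ds=-u^{-2}du$, so the form has a pole of order exactly $2$ along $D_H$. Hence $f\cdot dl_1\wedge\cdots\wedge dl_d$ is a section of $\O(1,\dots,1)\otimes\omega_{Y_L}$ near the generic point of $D_H$ if and only if $\deg_s f\le m(v)-2$, where $f$ is written as a polynomial in $s,w_2,\dots,w_d$.

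\textbf{Step 3: extending over codimension two.} By \Cref{multiplicity-free-thm} and \Cref{canonical-sheaf}, $Y_L$ is normal and Cohen--Macaulay, and $\omega_{Y_L}$ is reflexive. Therefore, by the Hartogs property, a rational section is global as soon as it is regular on $L$ and at the generic points of every $D_H$. Combining this with \Cref{forms-subspace}, I obtain the identification
\[\det(L)\otimes H^0(Y_L,\O(1,\dots,1)\otimes\omega_{Y_L})=\{f\in\Sym L^\vee : \deg_{s_v} f\le m(v)-2 \text{ for every cocircuit vector } v\}.\]

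\textbf{Step 4: matching the inverse system.} By \Cref{lem:fin_gen_set}, $J_{L,-1}$ is generated by the powers $v^{m(v)-1}$, where $v$ runs over cocircuit vectors. Since $J_{L,-1}$ is an ideal, $f\in J_{L,-1}^\perp$ if and only if $v^{m(v)-1}(D)f=0$ for each generator. The operator $v(D)$ is the directional derivative along $v$. In the coordinates $s,w_2,\dots,w_d$ above we have $w_k(v)=0$ and $s(v)=1$, so $v(D)=\partial/\partial s$. Consequently $v^{m(v)-1}(D)f=0$ holds exactly when $\deg_s f\le m(v)-2$, which is the condition found in Step 3. Both sides are homogeneous under the scaling action on $L$, so the equality holds as graded subspaces.
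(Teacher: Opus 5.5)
Your argument is correct, and it differs from the paper, which gives no proof of this statement and instead imports it from \cite[Theorem 1.14 and Proposition 4.4]{CP}. You prove it directly by reducing both sides to one condition per cocircuit. The invariant form $dl_1\wedge\cdots\wedge dl_d$ has a pole of order exactly $2$ along $D_H$, $\O(1,\ldots,1)$ has multiplicity $m(v)$ there, and $v(D)=\partial/\partial s$ turns $v^{m(v)-1}(D)f=0$ into $\deg_s f\le m(v)-2$. The coloop case $m(v)=1$ correctly gives $0$ on both sides.

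The step you flag as the main obstacle is easy and needs no circuit equations. Write $l_i=c_is+\lambda_i(w)$ for $i\in C$ and $l_j=\mu_j(w)$ for $j\in H$. The morphism $(u,w)\mapsto\bigl([c_i+u\lambda_i(w):u]_{i\in C},[\mu_j(w):1]_{j\in H}\bigr)$ maps a neighborhood of $\{u=0\}$ isomorphically onto an open subset of $Y_L$ containing the open stratum $D_H^\circ=\{\infty\}^{C}\times\pr_H(L)$. Indeed, $w$ is recovered from the $x_j$ with $j\in H$, and $u$ from $1/x_{i_0}$ for any $i_0\in C$. So $Y_L$ is smooth there with uniformizer $u$.

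What your Hartogs step does need, and you should state explicitly, is that $Y_L\setminus(L\cup\bigcup_H D_H^\circ)$ has codimension at least $2$. This follows from the stratification in \cite{AB}: points whose finite coordinates are exactly $F$ exist only for flats $F$, and their finite part lies in $\pr_F(L)$, of dimension $\mathrm{rk}\,F$. In particular, the finite coordinates on $D_H^\circ$ range over $\pr_H(L)\cong L/\C v$, not over $L\cap\C^H$ as you wrote.

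As for what each route buys: the citation is short. Your route makes the section self-contained, using only normality from \Cref{multiplicity-free-thm}, the boundary stratification and \Cref{lem:fin_gen_set}. It also runs verbatim for $\O(m,\ldots,m)$, where the multiplicity along $D_H$ becomes $m\cdot m(v)$, giving the thickened statement without \Cref{dilation-lemma}. Without the twist by $\omega_{Y_L}$, it recovers $H^0(Y_L,\O(1,\ldots,1))=J_{L,1}^\perp$, consistent with \Cref{external-gens}.
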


\begin{defn}\label{def:section-ring}
We define the following section ring as well as a module over it.
\begin{align*}
  R(Y_L, \O(1,\ldots,1)) &:= \bigoplus_{m \geq 0} H^0(Y_L, 
  \O_{Y_L}(m,\ldots,m)) \cdot x_0^m,\ \text{and}\\
  \widetilde{R}(Y_L, \O(1,\ldots,1)) &:= \bigoplus_{m \geq 0} H^0(Y_L, 
  \O_{Y_L}(m,\ldots,m) \otimes \omega_{Y_L}) \cdot x_0^m,
\end{align*}
where $R(Y_L, \O(1, \ldots, 1))$ is a subalgebra of $\Sym L^\vee 
\otimes \C[x_0]$ by \Cref{sections-subspace}, and likewise $\widetilde{R}(Y_L, 
\O(1, \ldots, 1))$ is an $R(Y_L, \O(1, \ldots, 1))$-submodule of $\Sym L^\vee \otimes_{\C} \C[x_0] 
\otimes_{\C} \det(L^\vee)$ by \Cref{forms-subspace}.
\end{defn}

We continue to explore the consequences of \Cref{multiplicity-free-thm} 
in the following Corollary, which will be engine behind the proof of 
\Cref{thrm:CM-intro}.

\begin{corollary}\label{Y-ring-and-module}
  The ring $R(Y_L,\O(1,\ldots,1))$ is the homogeneous coordinate ring of $Y_L$ under the Segre embedding $Y_L\subseteq \P^{2^n-1}$. It is Noetherian and Cohen-Macaulay, and 
  its canonical module is $\widetilde{R}(Y_L, \O(1,\ldots,1))$.
\end{corollary}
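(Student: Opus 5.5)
The plan is to deduce each assertion from \Cref{multiplicity-free-thm} together with standard facts about section rings of ample line bundles. First I will identify the section ring with the Segre coordinate ring. The line bundle $\O_{(\P^1)^n}(1,\ldots,1)$ is the pullback of $\O_{\P^{2^n-1}}(1)$ under the Segre embedding $\Phi$, so $\O_{Y_L}(m,\ldots,m)$ is the restriction of $\O_{\P^{2^n-1}}(m)$. The homogeneous coordinate ring $S/I(Y_L)$, with $S=\C[z_S : S\subseteq[n]]$, embeds degreewise into $\bigoplus_m H^0(Y_L,\O(m,\ldots,m))$. Projective normality in \Cref{multiplicity-free-thm} says precisely that this embedding is surjective in every degree $m$. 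For $m=1$ this is the stated surjectivity from $H^0((\P^1)^n,\O(1,\ldots,1))$, whose sections are the $z_S$. For general $m$ I will use that the Segre products of global sections generate. If one prefers, I can argue directly: normality of $Y_L$ plus Brion's surjectivity for all $\O(m,\ldots,m)$ (which \cite[Theorem 1]{Brion} gives for every globally generated line bundle) shows that the restriction from $H^0((\P^1)^n,\O(m,\ldots,m))=\mathrm{Sym}^m$ of the Segre coordinates modulo $I_{\text{Segre}}$ is onto. Under \Cref{sections-subspace}, $z_S$ restricts to the monomial $\prod_{i\in S} l_i$ times $x_0$, so the identification also matches the bigradings of \Cref{bigrading-1}.

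Noetherianity then follows immediately, since the ring is a quotient of the polynomial ring $S$. For Cohen--Macaulayness, I note that arithmetic Cohen--Macaulayness with respect to $\O(1,\ldots,1)$ is by definition the statement that the homogeneous coordinate ring, which is now $R(Y_L,\O(1,\ldots,1))$, is Cohen--Macaulay.

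For the canonical module, I will invoke the standard result for a Cohen--Macaulay graded ring $R=\bigoplus_m H^0(X,\mathcal{L}^m)$ with $X$ normal, projective and Cohen--Macaulay, $\mathcal{L}$ very ample, and $\dim R = \dim X+1\geq 2$. In that setting $\omega_R \cong \bigoplus_{m\in\Z} H^0(X,\omega_X\otimes\mathcal{L}^m)$; this can be cited from Goto--Watanabe or Bruns--Herzog \S3.6, or obtained from local duality together with $H^{i}_{\mathfrak m}(R)_m = H^{i-1}(X,\mathcal{L}^m)$ and Serre duality. Two things must then be checked. First, the summands with $m<0$ vanish: by \Cref{forms-subspace}, a section of $\omega\otimes\O(m,\ldots,m)$ would be $f\,dl_1\wedge\cdots\wedge dl_d$ with $f$ polynomial, and the order of pole along the boundary divisors forces $f=0$ when $m<0$. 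For this I would compare with $m=0$, where $H^0(\omega_{Y_L})=0$ because $Y_L$ is rational with rational singularities. The cleaner route is to use Kawamata--Viehweg/Kodaira-type vanishing, or the fact that $Y_L$ has rational singularities (\cite{AB}) so that $H^0(\omega\otimes\mathcal{L}^{-m})=0$ for $m\ge 0$. Second, the module structure matches the one in \Cref{def:section-ring}, which holds because both are given by multiplication of rational forms by sections.

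The main obstacle I expect is this canonical module step. Specifically, I need to verify the vanishing for $m\le 0$, so that the sum starts at $m\ge 0$ as in the definition of $\widetilde{R}$ (the indexing in \Cref{def:section-ring} even includes $m=0$, which must contribute zero). I also need to make sure that $\omega_R$ is computed with respect to the correct grading shift. I will handle the vanishing using rationality of $Y_L$ and its rational singularities, together with Grauert--Riemenschneider, or else by a direct pole-order computation on the boundary divisors via \cite{hassettGeometryEquivariantCompactifications1999}.
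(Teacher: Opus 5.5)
Your proposal is correct and follows essentially the same route as the paper. Projective normality from \Cref{multiplicity-free-thm} realizes $R(Y_L,\O(1,\ldots,1))$ as a quotient of the Segre coordinate ring, giving the identification and Noetherianity; arithmetic Cohen--Macaulayness gives the Cohen--Macaulay property; and the canonical module comes from the standard formula $\omega_R\cong\bigoplus_{m}H^0(Y_L,\omega_{Y_L}\otimes\O(m,\ldots,m))$ for section rings of projectively normal varieties, which the paper simply cites from Smith. You also check that the summands with $m<0$ vanish, via $H^0(Y_L,\omega_{Y_L})\cong H^d(Y_L,\O_{Y_L})^\vee=0$ for the rational variety $Y_L$ with rational singularities; the paper leaves this detail implicit in its citation, and your argument for it is sound.
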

\begin{proof}
   The projective normality statement of \Cref{multiplicity-free-thm} means that $R(Y_L, \O(1,\ldots,1))$ is a 
    quotient of $R((\P^1)^n, \O(1, \ldots, 1))$. The latter is the homogeneous coordinate ring of the Segre embedding $(\P^1)^n \subseteq \P^{2^n - 1}$, and therefore both rings are Noetherian and $R(Y_L, \O(1,\ldots,1))$ is the homogeneous coordinate ring of $Y_L\subseteq \P^{2^n-1}$. 
    The fact that $R(Y_L, \O(1,\ldots,1))$ is Cohen-Macaulay is the content of the 
  statement that $Y_L$ is arithmetically Cohen-Macaulay. Finally, the 
  statement that $\widetilde{R}(Y_L, \O(1,\ldots,1))$ is the canonical 
  module is a general fact about projectively normal varieties, for 
  which we refer to the discussion in \cite[Section 
  1.1]{smithFujitasFreenessConjecture1997}.
\end{proof}

Recall (\Cref{dilation-def}) that the $m$-thickening of $L\subseteq 
\C^n$ is the image $L(m)$ of $L$ under the diagonal embedding $\C^n 
\subseteq (\C^n)^m$. Write $x_{ij}$ for the coordinates of $\C^{m \times 
n}$.

\begin{lemma}\label{dilation-lemma}
    As graded subspaces of $\Sym L^\vee$ and 
    $\det(L^\vee) \otimes_{\C} \Sym L^\vee$ respectively, we have
  \begin{align*}
    H^0(Y_{L(m)}, \O(1, \ldots, 1)) &= H^0(Y_L, 
    \O(m,\ldots,m)),\\
    H^0(Y_{L(m)}, \O(1, \ldots, 1) \otimes \omega_{Y_{L(m)}}) &= 
    H^0(Y_L, \O(m,\ldots,m)\otimes \omega_{Y_L}).
  \end{align*}
\end{lemma}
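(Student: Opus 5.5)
The plan is to compare the two sides through a finite map of varieties and then identify sections inside $\Sym L^\vee$ as in \Cref{sections-subspace} and \Cref{forms-subspace}. Let $\Delta:(\P^1)^n \to ((\P^1)^n)^m=(\P^1)^{m\times n}$ be the diagonal embedding. The identification $L\cong L(m)$ is induced by the diagonal $\C^n\subseteq(\C^n)^m$, and $\Delta$ extends this map. So $\Delta$ restricts to a closed embedding $Y_L \hookrightarrow (\P^1)^{mn}$. Its image contains $L(m)$ as a dense open set, and it is closed, so the image is exactly $Y_{L(m)}$. This gives an isomorphism $Y_L \cong Y_{L(m)}$ that restricts to the given identification $L\cong L(m)$ on the open affine pieces.

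\emph{Line bundles.} The pullback of $\O_{(\P^1)^{mn}}(1,\ldots,1)$ along $\Delta$ is $\bigotimes_{i,j}\pr_j^*\O_{\P^1}(1)=\O_{(\P^1)^n}(m,\ldots,m)$, since each factor $j$ appears $m$ times. Hence the isomorphism identifies $\O_{Y_{L(m)}}(1,\ldots,1)$ with $\O_{Y_L}(m,\ldots,m)$. An isomorphism of varieties also carries the dualizing sheaf to the dualizing sheaf, and these exist by \Cref{canonical-sheaf}. Tensoring, the isomorphism identifies the global sections of both twisted sheaves.

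\emph{Compatibility with the embeddings.} It remains to check that these identifications agree with the embeddings into $\Sym L^\vee$ and $\det(L^\vee)\otimes\Sym L^\vee$. In \Cref{sections-subspace} a section is recorded as a polynomial function on the open set $L$, obtained by trivializing the bundle on $L$. The standard trivialization of $\O_{\P^1}(1)$ on the chart $b\neq 0$ is given by $b$. So $\O(1,\ldots,1)$ on $(\P^1)^{mn}$ is trivialized on the affine chart by $\prod_{i,j} b_{ij}$, and this pulls back to $\prod_j b_j^m$, which is the trivialization used for $\O(m,\ldots,m)$. The restriction of a section to $L$ therefore gives the same polynomial on either side. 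For $\omega$, a top form $f\,dl_1\wedge\cdots\wedge dl_d$ on $L(m)$ pulls back to the same expression under $L\cong L(m)$, because the coordinate functions on $L(m)$ restrict to the $l_j$. This gives the equalities as graded subspaces.

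\emph{Main obstacle.} The delicate point is the one \Cref{sections-subspace} glosses over. The embedding into $\Sym L^\vee$ depends on choosing the boundary divisor representative, that is, the trivialization on $L$, so I must make sure the chosen trivializations match, as above. A related issue is the grading: the $q$-grading by polynomial degree corresponds to the $\C^*$-action, and $\Delta$ is $\C^*$-equivariant, so the gradings agree.
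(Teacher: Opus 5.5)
Your proposal is correct and is essentially the paper's argument: the diagonal embedding gives $Y_L \cong Y_{L(m)}$ extending $L \cong L(m)$, it pulls $\O(1,\ldots,1)$ back to $\O(m,\ldots,m)$, and the isomorphism carries dualizing sheaf to dualizing sheaf (the paper phrases the comparison of sections via the projection formula). Your check that the trivializations $\prod_{i,j} b_{ij}$ and $\prod_j b_j^m$ correspond, so that the two subspaces of $\Sym L^\vee$ literally coincide, fills in a detail the paper leaves implicit; the ``choice'' you worry about does not actually arise, because the boundary divisor in \Cref{sections-subspace} is unique (the only units on $L$ are constants).
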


\begin{proof}
    The point is that the pullback of $\mathcal{O}_{(\P^1)^m}(1, \ldots, 
    1)$ over the diagonal embedding $\P^1 \hookrightarrow (\P^1)^m$ is 
    $\mathcal{O}_{\P^1}(m)$. Taking products, we get that the pullback 
    of $\mathcal{O}_{(\P^1)^{nm}}(1, \ldots, 1)$ over the diagonal 
    embedding in \Cref{dilation-def} is 
    $\mathcal{O}_{(\P^1)^n}(m,\ldots,m)$. The lemma now follows by the projection 
    formula, and the fact that $\omega_{Y_L} = \omega_{Y_{L(m)}}$ since 
    $Y_L \cong Y_{L(m)}$ as varieties.
\end{proof}

In the following proposition, we specialize from arbitrary $L \subseteq \C^n$ to the case of a unimodular zonotope.
\begin{prop}\label{section-ring-is-harmonic-algebra}
    Suppose that $Z = A \cdot [0,1]^n \subseteq \R^d$ is a unimodular 
    zonotope, and set $L := \rowspace(A) \otimes \C \subseteq \C^n$.
    Then there is an isomorphism of bigraded algebras and an isomorphism 
    of bigraded $\H_Z$-modules 
  \[
      \H_Z \cong R(Y_L, \O(1, \ldots, 1)),\ \ \ \text{and}\ \ \
    \widetilde{\H}_Z[0,d] \cong \widetilde{R}(Y_L, \O(1, \ldots, 1)), 
    \ \ \ \widetilde{\H}_Z[0,d] := \bigoplus_{i,j \in \Z} 
      \left(\widetilde{\H}_Z\right)_{i,j+d}.
  \]
\end{prop}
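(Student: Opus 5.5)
The plan is to compare both sides degree by degree inside the common ambient space $\Sym L^\vee \otimes \C[x_0]$. We first identify $\C[x_1,\ldots,x_d]$ with $\Sym L^\vee$. Since $L$ is the rowspace of $A$, the dual $L^\vee$ is the column space of $A$, and the coordinates of $\R^d$ become linear functionals on $L$ via $A$. Under this identification $\H_Z = \bigoplus_m x_0^m V_{mZ}$ and $R(Y_L,\O(1,\ldots,1)) = \bigoplus_m x_0^m H^0(Y_L,\O(m,\ldots,m))$ are both subspaces of the same bigraded ring. In each, multiplication is the ordinary product of polynomials together with the product of powers of $x_0$. So it suffices to prove the equalities of graded subspaces
\[
V_{mZ} = H^0(Y_L,\O(m,\ldots,m)) \quad\text{and}\quad \widetilde{V_{mZ}} = \det(L)\otimes H^0(Y_L,\O(m,\ldots,m)\otimes\omega_{Y_L})
\]
inside $\Sym L^\vee$ for every $m$. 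The algebra and module isomorphisms are then automatic.

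For the first equality, we apply \Cref{dilated-zonotope-vs-matroid}: $mZ = A(m)\cdot[0,1]^{mn}$ is a unimodular zonotope whose rowspace is $L(m)$, and $L(m)\cong L$ via the diagonal. By \Cref{thrm:zon_orbit}, $\gr I(mZ\cap\Z^d) = J_{L(m),1}$, so $V_{mZ} = J_{L(m),1}^\perp$. By \Cref{external-gens}, this is spanned by the squarefree products $\prod_{(i,j)\in S} l_{ij}$. Since $l_{ij}$ restricts to $l_j$ on $L(m)$, these are exactly the products $\prod_j l_j^{a_j}$ with $0\le a_j\le m$.

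Next we check that the same products span $H^0(Y_L,\O(m,\ldots,m))\subseteq\Sym L^\vee$. By \Cref{multiplicity-free-thm} applied to $L(m)$, together with \Cref{dilation-lemma}, restriction from $H^0((\P^1)^{mn},\O(1,\ldots,1))$ is surjective onto $H^0(Y_L,\O(m,\ldots,m))$. The Segre coordinates $c_S$ restrict on the affine chart $L$ (where $b=1$) to $\prod_{(i,j)\in S} l_{ij}|_L$. This is the same spanning set, which proves the first equality. The main obstacle here is a compatibility check: the identification of sections with polynomials in \Cref{sections-subspace} must send the Segre coordinates to these monomials. It does, since the boundary divisor representing $\O(1)$ on $\P^1$ is the point at infinity, and the section $b$ corresponds to $1$.

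For the interior ideal, we use \Cref{thrm:zon_orbit} again: $\widetilde{V_{mZ}} = J_{L(m),-1}^\perp$. By \Cref{internal-gens} applied to $L(m)$, this equals $\det(L(m))\otimes H^0(Y_{L(m)},\O(1,\ldots,1)\otimes\omega)$. By the second part of \Cref{dilation-lemma}, this is $\det(L)\otimes H^0(Y_L,\O(m,\ldots,m)\otimes\omega_{Y_L})$. The $\det$ factor accounts for the $q$-shift: a form $f\,dl_1\wedge\cdots\wedge dl_d$ is sent to $f$, which changes the $q$-degree by $d$. This explains the shift $[0,d]$ in the statement. We still need to verify that these identifications respect the module structure, i.e.\ that multiplying sections of $\O(m,\ldots,m)$ with forms corresponds to multiplying polynomials. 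We also need the diagonal identifications of $L(m)$ with $L$ to be compatible across different $m$; both are clear because everything is computed on the open chart $L$.
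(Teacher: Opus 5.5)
Your proposal is correct and follows essentially the same route as the paper. Both arguments compare $V_{mZ}$ and $\widetilde{V_{mZ}}$ with the corresponding section spaces degree by degree inside $\Sym L^\vee$, using \Cref{thrm:zon_orbit}, the spanning set of \Cref{external-gens} (resp.\ \Cref{internal-gens}), the surjectivity in \Cref{multiplicity-free-thm}, and \Cref{dilation-lemma}, with the $\det(L)$ factor accounting for the $q$-shift. Your explicit check that the Segre coordinates restrict on the chart $L(m)$ to the products $\prod_{(i,j)\in S} l_{ij}$ simply spells out the one unjustified step in the paper's chain of equalities.
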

\begin{proof} 
    Let us start by proving the first isomorphism. We have the following 
    chain of equalities of graded subspaces of $\Sym L^\vee$.

    \begin{align*}
        (\H_Z)_{(m,*)} &= V_{mZ} &&\text{by \Cref{defn:harm},}\\
                       &= (J_{mZ}^\text{ext})^\perp && \text{by 
                       \Cref{thrm:zon_orbit},}\\
                       &= \C\left\{ \prod_{(i,j) \in S} l_{ij} 
                           \,\middle\vert\, S 
    \subseteq [m] \times [n] \right\},\quad l_{ij} := x_{ij} \vert_L, && \text{by \Cref{external-gens},}\\
                                      &= \text{Im}\left(
                                      H^0((\P^1)^{mn},\O(1,\ldots,1)) 
                                      \to 
                                      H^0(Y_{L(m)},\O(1,\ldots,1))\right)\\
                                      &= H^0(Y_{L(m)},\O(1,\ldots,1)) && 
                                      \text{by \Cref{multiplicity-free-thm},}\\
                                      &= H^0(Y_{L}, \O(m, \ldots, m)) && 
                                      \text{by \Cref{dilation-lemma}.}\\
    \end{align*}
    Multiplying by $x_0^m$ and summing over $m \geq 
    0$, yields $\H_Z = R(Y_L, \O(1, \ldots, 1))$ as bigraded subalgebras of $\Sym 
    L^\vee \otimes \C[x_0]$.

    Now we turn our attention to the proof of the second isomorphism in 
    the theorem statement. It is the same as the proof of the first 
    isomorphism, except that the spanning set of \Cref{external-gens} is replaced 
    with the geometric interpretation of \Cref{internal-gens}. Again, we 
    prove a chain of equalities of graded subspaces of $\Sym L^\vee$.
    \begin{align*}
        (\widetilde{\H_Z})_{(m,*)} &= \widetilde{V_{mZ}} &&\text{by \Cref{defn:harm},}\\
                       &= (J_{mZ}^\text{int})^\perp && \text{by 
                       \Cref{thrm:zon_orbit},}\\
                       &=  \det(L) \otimes_{\C} H^0(Y_{L(m)}, \O(1, \ldots, 
        1) \otimes \omega_{Y_{L(m)}}) && \text{by \Cref{internal-gens},}\\
                                      &= \det(L) \otimes_{\C} H^0(Y_{L}, \O(m, \ldots, m) 
        \otimes \omega_{Y_L}) && 
                                      \text{by \Cref{dilation-lemma}.}
    \end{align*}
    Summing over $m \geq 0$, yields $\widetilde{\H_Z} \cong \det(L) 
    \otimes_{\C} \widetilde{R}(Y_L, \O(1, \ldots, 1))$ as $\H_Z$-submodules of 
    $\Sym L^\vee \otimes \C[x_0]$. 
    Since the $\C^*$-action on $L$ is by inverse scaling, $\det(L)$ has 
    weight $-d$. Therefore the bihomogeneous graded component
    \[
        \det(L) \otimes_\C H^0(Y_L, \O(m, \ldots, m)\otimes 
        \omega_{Y_L})_i \otimes \C\cdot x_0^m \subseteq \det(L) 
        \otimes_\C \widetilde{R}(Y_L, 
        \O(1, \ldots, 1))
    \]
    has bidegree $(m, i-d)$, accounting for the degree shift.
\end{proof}
With this in place, we can now give proofs of \Cref{thrm:geometry} and \Cref{thrm:CM-intro}.
\begin{proof}[Proof of \Cref{thrm:geometry} and \Cref{thrm:CM-intro}]
\Cref{section-ring-is-harmonic-algebra} identifies the section ring of the line bundle $\O_{Y_L}(1, \ldots, 1)$ with 
    the harmonic algebra $\H_Z$ and the canonical module of the section 
    ring with the shifted interior ideal $\widetilde{\mathcal{H}_Z}$. Both claims now follow from \Cref{Y-ring-and-module}.
\end{proof}

We now turn our attention to proving 
\Cref{thrm:pres-intro,cor:harmonic_pres}. We start by 
proving the commutativity of a diagram which will let us write 
$\mathcal{H}_Z$ as a homogeneous quotient of $\mathbb{C}[z_S\vert 
S\subseteq [n]]$.

\begin{prop}\label{coordinate-ring-diagram}
    We have the following commutative diagram, where $i$ ranges over 
    $[n]$, $S$ ranges over subsets of $[n]$, $\Phi(z_S) = 
    \prod_{i \in S}x_i \prod_{i \not\in S}y_i$, and $\widetilde{I_L}$ is 
    the multihomogeneous circuit ideal of \Cref{circuit-equations}. 
    \[\begin{tikzcd}
    \C[x_i,y_i] \arrow[r, "\cong"] & 
    \bigoplus_{u \in 
\Z_{\geq 0}^n} H^0((\P^1)^n, \O(u)) \arrow[r, two heads] & \bigoplus_{u \in 
\Z_{\geq 0}^n} H^0(Y_L, \O(u)) \arrow[r, "\cong"] & \C[x_i,y_i]/\widetilde{I_L}\\
\C[z_S]\arrow[u, "\Phi"]\arrow[r,two heads] & R((\P^1)^n, 
\O(1, \ldots, 1)) \arrow[u, hook]\arrow[r,two heads] & R(Y_L, \O(1, 
\ldots, 1)) \arrow[u,hook] \arrow[r,"\cong"] & 
\C[z_S]/\Phi^{-1}(\widetilde{I_L})\arrow[u, hook]
\end{tikzcd}\]
\end{prop}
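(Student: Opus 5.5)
We verify the diagram square by square, building each map explicitly and checking the identifications against \Cref{multiplicity-free-thm} and the presentation of $Y_L$ in \cite[Theorem 1.3(a)]{AB}.

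\textbf{Top row.} The first isomorphism is the standard description of the Cox ring of $(\P^1)^n$. The polynomial ring $\C[x_i,y_i]$, $\Z_{\geq 0}^n$-graded by $\deg x_i=\deg y_i=e_i$, has multidegree-$u$ piece equal to $H^0((\P^1)^n,\O(u))$, since $\O(u)$ is the exterior tensor product of the $\O_{\P^1}(u_i)$ and Künneth applies. The second map is restriction of sections. The third map is the isomorphism of the multigraded coordinate ring of $Y_L$ with $\C[x_i,y_i]/\widetilde{I_L}$. This uses that $\widetilde{I_L}$ cuts out $Y_L$, together with the stronger statement of \cite{AB} that $\C[x_i,y_i]/\widetilde{I_L}$ is the full multigraded coordinate ring. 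That statement follows from multiplicity-freeness via Brion's theorem \cite{Brion} applied to every $\O(u)$ with $u\geq 0$, not only $u=(1,\ldots,1)$. Brion's result gives surjectivity of the restriction map in every nonnegative multidegree, so the top middle arrow is onto and its kernel is the saturated multihomogeneous ideal of $Y_L$. By \cite[Theorem 1.3]{AB}, this ideal is $\widetilde{I_L}$; this identification is the step needing the most care.

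\textbf{Bottom row.} The map $\C[z_S]\to R((\P^1)^n,\O(1,\ldots,1))$ sends $z_S$ to the section $\prod_{i\in S}x_i\prod_{i\notin S}y_i$ of $\O(1,\ldots,1)$. It is surjective because the degree-$m$ piece of the section ring, $H^0(\O(m,\ldots,m))$, is spanned by monomials whose $x_i,y_i$-degrees sum to $m$ in each index $i$, and each such monomial factors as a product of $m$ squarefree-type monomials $\Phi(z_S)$. The next map $R((\P^1)^n,\O(1,\ldots,1))\to R(Y_L,\O(1,\ldots,1))$ is restriction, and it is surjective by \Cref{multiplicity-free-thm} applied in every degree $m$. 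Alternatively, one can apply that theorem to $L(m)$ and pull back via \Cref{dilation-lemma}. The vertical maps in the middle are the inclusions of the diagonal multidegrees $u=(m,\ldots,m)$, and $\Phi$ lands in exactly these multidegrees. The left square commutes by the definition of $\Phi$, and the middle square commutes because restriction respects the grading.

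\textbf{Last isomorphism and right vertical arrow.} The composite $\C[z_S]\to R(Y_L,\O(1,\ldots,1))$ is surjective. Its kernel consists of those $g$ with $\Phi(g)$ vanishing on $Y_L$, that is, with $\Phi(g)$ mapping to zero in $\C[x_i,y_i]/\widetilde{I_L}$ under the top row. By commutativity of the first two squares and the top-row identifications, this kernel is $\Phi^{-1}(\widetilde{I_L})$. This gives the bottom-right isomorphism, and the induced map $\C[z_S]/\Phi^{-1}(\widetilde{I_L})\to\C[x_i,y_i]/\widetilde{I_L}$ is injective by construction, so the right square commutes. The bigrading from \Cref{bigrading-1} is respected throughout, because $\Phi$ matches $\deg z_S=(1,|S|)$ with the $x$-degree count.
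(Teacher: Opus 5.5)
Your proof is correct and follows essentially the same square-by-square verification as the paper. The steps match: K\"unneth for the top-left identification, Brion's surjectivity for the restriction maps, \cite[Theorem 1.3(a)]{AB} to identify the kernel with $\widetilde{I_L}$, factoring diagonal-multidegree monomials into products of the $\Phi(z_S)$, and reading off the bottom-right isomorphism from injectivity of the diagonal inclusions. You are in fact slightly more careful than the paper on two points: Brion's theorem is needed in every multidegree $u\geq 0$, whereas \Cref{multiplicity-free-thm} is stated only for $(1,\ldots,1)$, and the equality of the kernel with $\widetilde{I_L}$ genuinely uses Ardila--Boocher rather than holding ``by definition.''
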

\begin{proof}
    The top left horizontal arrow is a standard identification, where 
    the sections of $\O_{(\P^1)^n}(u)$ are identified with polynomials 
    of $\Z_{\geq 0}^n$-multidegree $u$ in $\C[x_i,y_i]$.
    The top middle arrow is surjective by \Cref{multiplicity-free-thm}. 
    The multihomogeneous defining equations $\widetilde{I_L}$ are by 
    definition the kernel of the map $\C[x_i,y_i] \to \bigoplus_{u \in 
    \Z_{\geq 0}^n}H^0(Y_L, \O(u))$, so the top right arrow is an 
    isomorphism. The ring map $\Phi$ is surjective onto the degree $(m, 
    \ldots, m)$ component of $\C[x_i,y_i]$ for all $m$, so the bottom 
    left arrow is surjective. The second, third, and fourth vertical arrows all 
    denote the inclusion of the diagonal $\Z_{\geq 0}^n$-graded 
    piece, and are thus injective.
\end{proof}

\begin{corollary}\label{this-is-segre}
  Let $\varphi_L$ be the following composition in 
\Cref{coordinate-ring-diagram}.
\[
\begin{tikzcd}
\mathbb{C}[z_S\vert S\subseteq [n]] \arrow[r, "\Phi"] & \mathbb{C}[x_1,y_1,\ldots,x_n,y_n] \arrow[r] & \frac{\mathbb{C}[x_1,y_1,\ldots,x_n,y_n]}{\widetilde I_L}
\end{tikzcd}\]
The bigraded homogeneous coordinate ring of $Y_L\subseteq \P^{2^n-1}$ is isomorphic to  $\mathbb{C}[z_S: 
S\subseteq [n]]/\ker \varphi_L$ where the generator $z_S$ has $t$-degree 
$1$ and $q$-degree $|S|$. 
\end{corollary}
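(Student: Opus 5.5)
The corollary will follow directly from the bottom row of \Cref{coordinate-ring-diagram}, once we check the bigradings. We have the identifications
\[
R(Y_L,\O(1,\ldots,1)) \;\cong\; \C[z_S]/\Phi^{-1}(\widetilde{I_L}),
\]
and the bottom-left horizontal composite $\C[z_S]\twoheadrightarrow R((\P^1)^n,\O(1,\ldots,1))\twoheadrightarrow R(Y_L,\O(1,\ldots,1))$ is surjective. This composite is the restriction of $\Phi$ followed by the quotient map to $\C[x_i,y_i]/\widetilde{I_L}$, landing in the diagonal graded piece. The composite is therefore exactly $\varphi_L$ with its codomain restricted to the diagonal part, which is injective by the fourth vertical arrow. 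Hence $\ker\varphi_L=\Phi^{-1}(\widetilde{I_L})$, and $\C[z_S]/\ker\varphi_L\cong R(Y_L,\O(1,\ldots,1))$. By \Cref{Y-ring-and-module}, the latter is the homogeneous coordinate ring of $Y_L\subseteq\P^{2^n-1}$.

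Next we match the bigradings. The $t$-degree is the $\N$-grading of the section ring, namely the power of $x_0$, or equivalently the degree $m$ of $\O(m,\ldots,m)$. Since $\Phi(z_S)$ has multidegree $(1,\ldots,1)$, each $z_S$ has $t$-degree $1$, and $\Phi$ respects this grading. For the $q$-degree we use the $\C^*$-action of \Cref{bigrading-1}. The action $t\cdot[a_i:b_i]=[t^{-1}a_i:b_i]$ corresponds to giving $x_i$ weight $1$ and $y_i$ weight $0$ in the coordinate ring, using the same sign convention under which $[c_S]\mapsto[t^{-|S|}c_S]$ assigns $z_S$ weight $|S|$. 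Then $\Phi(z_S)=\prod_{i\in S}x_i\prod_{i\notin S}y_i$ has weight $|S|$, so $\Phi$ is equivariant. The ideal $\widetilde{I_L}$ is homogeneous for this weight: each term of $\widetilde{f_C}$ has exactly one $x$-variable, so $\widetilde{f_C}$ has weight $1$. Consequently $\ker\varphi_L$ is bihomogeneous and the isomorphism preserves bidegrees.

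The only point needing care is the sign convention for the $q$-grading. It should be consistent with \Cref{section-ring-is-harmonic-algebra}, where sections in $\Sym L^\vee$ of polynomial degree $j$ have $q$-degree $j$. Under the chart $[a:1]$ one has $x_i/y_i\mapsto l_i$, so $z_S/z_\emptyset\mapsto\prod_{i\in S}l_i$, which has polynomial degree $|S|$; this confirms that $z_S$ has $q$-degree $|S|$. Beyond this bookkeeping, no real obstacle is expected.
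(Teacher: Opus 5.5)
Your proposal is correct and follows the paper: the corollary is read off from the bottom row of \Cref{coordinate-ring-diagram} (commutativity plus the injective vertical inclusions identify the kernel of $\C[z_S]\twoheadrightarrow R(Y_L,\O(1,\ldots,1))$ with $\ker\varphi_L=\Phi^{-1}(\widetilde{I_L})$) together with \Cref{Y-ring-and-module}. Your explicit check that $\Phi$ and $\widetilde{I_L}$ respect the $t$- and $q$-gradings is bookkeeping the paper leaves implicit in \Cref{bigrading-1}, and you carry it out correctly.
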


Our goal is to give an explicit presentation of the ideal $\ker \varphi_L$.

\begin{defn} 
For a circuit $C$ and a subset $A\subseteq [n]\setminus C$, define the linear function $f_C^A(z)$ as
\[ f_C^A(z) = \sum_{i\in C} 
\alpha_{C,i}z_{A\cup \{i\}}\]
where the $\alpha_{C,i}$ are the numbers defined in \Cref{circuit-equations}. Define the ideal $I_L^\text{SE} \subseteq \mathbb{C}[z_S\vert  S\subseteq [n]]$ as
\[
    I_L^{\text{SE}} = \left\langle z_Sz_T-z_{S\cup T}z_{S\cap T}, f_C^A(z) \,\vert\,
S,T\subseteq [n], C \text{ a circuit and } A\subset [n]\setminus 
C\right\rangle.
\]
Write $R_L \coloneqq \C[z_S \mid S \subseteq [n]]/I_L^{\text{SE}}$ for the 
associated quotient algebra.
\end{defn}

We can now restate \Cref{thrm:pres-intro} as the 
following theorem.

\begin{thrm}\label{prop:harmonic-pres}
    We have that $\ker \varphi_L = I_L^{\text{SE}}$ and hence that  $\mathcal{H}_Z \simeq R_L$. 
  \end{thrm}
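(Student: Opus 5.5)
We must show $\ker\varphi_L = I_L^{\text{SE}}$. The inclusion $I_L^{\text{SE}} \subseteq \ker\varphi_L$ is a direct check. Under $\Phi$, the binomial $z_Sz_T - z_{S\cup T}z_{S\cap T}$ maps to zero already in $\C[x_i,y_i]$, since both monomials equal $\prod_i x_i^{a_i}y_i^{2-a_i}$ with $a_i = |\{i\}\cap S| + |\{i\}\cap T|$. For a circuit $C$ and $A \subseteq [n]\setminus C$ we have
\[
\Phi(f_C^A) = \prod_{a\in A} x_a \prod_{j\notin A\cup C} y_j \cdot \widetilde{f_C},
\]
which lies in $\widetilde{I_L}$.

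For the reverse inclusion we compare Hilbert functions. The ring $\C[z_S]/\ker\varphi_L$ is the homogeneous coordinate ring of $Y_L$ by \Cref{this-is-segre}. So it suffices to show that for each $m$,
\[
\dim (R_L)_{(m,*)} \le \dim H^0(Y_L,\O(m,\ldots,m)).
\]
Combined with the surjection $R_L \twoheadrightarrow \C[z_S]/\ker\varphi_L$, this bound forces equality.

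The binomials first reduce $R_L$ modulo the binomial ideal to the Segre ring of $(\P^1)^n$, which is the Hibi-type ring of the Boolean lattice. In $t$-degree $m$, this has a basis of sorted monomials $z_{S_1}\cdots z_{S_m}$ with chains $S_1\subseteq\cdots\subseteq S_m$. These are equivalently the monomials $\prod x_i^{a_i}y_i^{m-a_i}$, i.e.\ vectors $a\in\{0,\ldots,m\}^n$. Thus $(R_L)_{(m,*)}$ is the quotient of $\C[x,y]_{(m,\ldots,m)}$ by the span $W_m$ of the images of $z_{S_1}\cdots z_{S_{m-1}} f_C^A$. Each such image equals $\Phi(\text{monomial})\cdot\Phi(f_C^A)$. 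The target $H^0(Y_L,\O(m,\ldots,m))$ is the quotient of the same space by $(\widetilde{I_L})_{(m,\ldots,m)}$. So the claim becomes a statement purely about $L$:
\[
(\widetilde{I_L})_{(m,\ldots,m)} = W_m,
\]
the span of the products $\widetilde{f_C}\cdot \prod_{a\in A}x_a\prod_{j\notin A\cup C}y_j \cdot g$, where $g$ is a multidegree-$(m-1,\ldots,m-1)$ monomial.

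Comparing a general element $h\,\widetilde{f_C}$ of $(\widetilde{I_L})_{(m,\ldots,m)}$ with these products, the issue is that the cofactor $h$ may be divisible by $x_c^2$ or $y_c^2$ for some $c\in C$. In that case it cannot be written as (square-free in $C$ and degree one outside $C$) times (degree $m-1$), because $\widetilde{f_C}$ has degree $0$ in one variable pair for each $c$ in the second factor's slot. To handle this, I would reduce via thickening: by \Cref{dilation-lemma} and the isomorphism $Y_L\cong Y_{L(m)}$, degree $m$ for $L$ corresponds to degree $1$ for $L(m)$. The circuits of $M(m)$ are the circuits of $M$ placed in various rows, together with the parallel pairs $\{(r,j),(r',j)\}$. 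The linear relations $f$ for the parallel circuits, together with the binomials, implement the identification of the Segre ring of $L(m)$ with the Veronese-type subring. Hence it suffices to prove the case $m=1$ for all $L$: the degree-$(1,\ldots,1)$ part of $\widetilde{I_L}$ is spanned by the elements $\widetilde{f_C}\prod_{a\in A}x_a\prod_{j\notin A\cup C}y_j$. By the $m=1$ argument above, this reads
\[
\dim R_L{}_{(1,*)} = 2^n - \dim\operatorname{span}\{f_C^A\} \le \dim H^0(Y_L,\O(1,\ldots,1)).
\]

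I would prove this $m=1$ statement by deletion/contraction induction on $n$, as the introduction suggests. The key fact is
\[
\dim H^0(Y_L,\O(1,\ldots,1)) = \dim (J_{L,1})^\perp = q^{n-d}T_M(1+q,q^{-1})\big|_{q=1} = T_M(2,1)
\]
(by the chain of equalities in \Cref{section-ring-is-harmonic-algebra} with $m=1$, together with \Cref{zonotopal-hilbert}; this holds for any $L$). This quantity satisfies $T_M(2,1) = T_{M\setminus i}(2,1) + T_{M/i}(2,1)$, and is multiplied by $2$ for a coloop and by $1$ for a loop. On the other side, split the basis $z_S$ by whether $i\in S$. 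The linear span of $\{z_S : i\notin S\}$ modulo the relations $f_C^A$ with $i\notin C\cup A$ maps onto the corresponding space for $M\setminus i$, since circuits of $M\setminus i$ are the circuits of $M$ avoiding $i$. The span of $\{z_S : i\in S\}$ modulo relations with $i\in A$ maps onto the space for $M/i$, whose circuits are the minimal nonempty sets $C\setminus i$. I must check that relations with $i\in C$ supply the missing contraction circuits in the second block, possibly after subtracting relations from the first block. This filtration gives
\[
\dim \le \dim(\text{deletion quotient}) + \dim(\text{contraction quotient}) = T_{M\setminus i}(2,1) + T_{M/i}(2,1).
\]
The base cases are loops, where $f_{\{i\}}^A$ forces $z_{A\cup i}=0$, and coloops, which are free.

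The main obstacle is the contraction step. A circuit $C'$ of $M/i$ arises either as a circuit of $M$ or as $C\setminus i$ for some circuit $C\ni i$. In the latter case, the relation $f_C^A$ involves $\alpha_{C,i} z_{A\cup i}$ alongside the terms $z_{A\cup c}$ with $c\ne i$, which lie in the first block rather than the second. I would try first to combine $f_C^{A\cup i'}$-type relations, or else use a different filtration, namely ordering the $z_S$ by whether $S$ contains $i$ and taking the associated graded, so that the leading terms give exactly the contraction relations.
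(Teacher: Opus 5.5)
Your proposal does not close as written. The degree-one bound is left as an acknowledged open step, and the reduction from $t$-degree $m$ to degree one is only asserted. That reduction needs $z_S\mapsto\prod_i z_{S_{i,*}}$ to descend to a surjection $(R_{L(m)})_1\twoheadrightarrow(R_L)_m$. The relations that need work are not the parallel pairs. They are the $f^A_C$ for circuits $C$ of $M(m)$ that put one entry in each column of a circuit $C'$ of $M$, spread over several rows; this is what \Cref{lem:multiset-relations} and \Cref{lem:reduce-to-linear} handle. Modulo the binomials, the image of such an $f^A_C$ is $x^a y^{m-a-\mathbf{1}_{C'}}\,\widetilde{f_{C'}}$ with $a_k=|A_{*,k}|$. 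This cofactor can contain $x_c^2$ for $c\in C'$, which is exactly the case you declared unmanageable, so the reduction cannot be justified while that obstruction stands.

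Your deletion/contraction step also reverses the roles of the two blocks. The span of $\{z_S : i\in S\}$, identified with $\C[z_S\mid S\subseteq[n]\setminus i]_1$ via $z_S\mapsto z_{S\setminus i}$, contains the deletion relations $f^A_C$ with $i\in A$, $i\notin C$. Killing all $z_S$ with $i\in S$ sends each $f^A_C$ with $i\notin A$ to $f^A_{C\setminus i}$, and these span the relations of $M/i$. So the block $\{z_S : i\notin S\}$, taken as the quotient, is the contraction. There the $z_{A\cup i}$ term of a circuit through $i$ simply dies, and you get $\dim(R_L)_1\le T_{M\setminus i}(2,1)+T_{M/i}(2,1)$, as in \Cref{lem:dim1-case}.

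More importantly, the obstruction that made you abandon your first reduction is not real. Let $h$ be a monomial of multidegree $(m,\ldots,m)-\mathbf{1}_C$ with $m\ge 1$. For each $k\notin C$, one of $x_k,y_k$ divides $h$. Put $k\in A$ when $x_k\mid h$; this gives $h=g\cdot\prod_{a\in A}x_a\prod_{j\notin A\cup C}y_j$ with $g=\prod_k x_k^{b_k}y_k^{m-1-b_k}$ of multidegree $(m-1,\ldots,m-1)$. The second factor has degree zero on $C$, so whatever $h$ does on $C$, squares included, is absorbed into $g$. Moreover $g=\Phi(z_{S_1}\cdots z_{S_{m-1}})$ with $S_r=\{k : b_k\ge r\}$. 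Hence $h\,\widetilde{f_C}=\Phi(z_{S_1}\cdots z_{S_{m-1}}f^A_C)\in W_m$, so $(\widetilde{I_L})_{(m,\ldots,m)}=W_m$ for every $m$. For $m=1$ we have $g=1$, so even there no deletion/contraction is needed. Combined with your sorted-monomial description of $\ker\Phi$, this gives $\ker\varphi_L\subseteq I_L^{\text{SE}}$ directly, with no Hilbert-function comparison, thickening, or deletion/contraction.

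That completed argument is a genuinely different and much shorter route than the paper's. Its key input is the ideal-theoretic form of Ardila--Boocher: the $\widetilde{f_C}$ generate the whole multihomogeneous ideal of $Y_L$. The paper also takes this for granted when it identifies $\widetilde{I_L}$ with the kernel in \Cref{coordinate-ring-diagram}. The paper's own argument, by contrast, uses the circuit equations only through the containment of \Cref{lem:harmonic-containment}. It gets the reverse inclusion by bounding $\dim(R_L)_m$ above by the section-ring count of \Cref{lem:ISE-lower-bound}, which rests on Brion's theorem and the zonotopal Hilbert series.
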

  
 As the following lemma shows, it is straightforward to check that $I_L^\text{SE} \subseteq \ker \varphi_L$.
\begin{lemma}\label{lem:harmonic-containment}
    The ideal $I_L^\text{SE}$ is contained in the kernel of $\varphi_L$.
\end{lemma}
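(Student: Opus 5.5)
To prove \Cref{lem:harmonic-containment}, it suffices to check that each of the two families of generators of $I_L^\text{SE}$ maps to zero under $\varphi_L$. Since $\varphi_L$ is the composite of the ring map $\Phi$ with the quotient by $\widetilde{I_L}$, for each generator $g$ I will show that $\Phi(g)$ is either zero in $\C[x_i,y_i]$ or lies in $\widetilde{I_L}$.

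\emph{The binomial generators.} For $S,T \subseteq [n]$, I compare the exponent of each variable in $\Phi(z_Sz_T)$ and $\Phi(z_{S\cup T}z_{S\cap T})$. The monomial $\Phi(z_Sz_T)$ contains $x_i^{[i\in S]+[i\in T]}\,y_i^{[i\notin S]+[i\notin T]}$. Since $[i\in S]+[i\in T] = [i\in S\cup T]+[i\in S\cap T]$, and the analogous identity holds for the complements, the two monomials coincide. Hence $\Phi(z_Sz_T - z_{S\cup T}z_{S\cap T}) = 0$ already in $\C[x_i,y_i]$.

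\emph{The linear generators.} Fix a circuit $C$ and a subset $A \subseteq [n]\setminus C$. For $i \in C$ we have $A\cup\{i\} \subseteq A \cup C$, and its complement in $[n]$ is $([n]\setminus(A\cup C)) \sqcup (C\setminus i)$. Therefore
\[
\Phi(f_C^A) = \sum_{i\in C}\alpha_{C,i}\Bigl(\prod_{a\in A}x_a\Bigr)\Bigl(\prod_{j\notin A\cup C}y_j\Bigr) x_i\prod_{j\in C\setminus i}y_j = \Bigl(\prod_{a\in A}x_a\prod_{j\notin A\cup C}y_j\Bigr)\widetilde{f_C}.
\]
This is a multiple of the multihomogeneous circuit equation $\widetilde{f_C}$ (\Cref{circuit-equations}), so it lies in $\widetilde{I_L}$.

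Both families therefore lie in $\ker\varphi_L$. Since $\ker\varphi_L$ is an ideal, it contains $I_L^\text{SE}$. The only point requiring care is the bookkeeping in the linear case: the factor coming from $A$ and from $[n]\setminus(A\cup C)$ must be the same for every $i\in C$, and this holds precisely because $A$ is disjoint from $C$.
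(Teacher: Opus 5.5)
Your proposal is correct and follows essentially the same argument as the paper: the binomial generators already lie in $\ker\Phi$, and each $\Phi(f_C^A)$ factors as $\widetilde{f_C}\prod_{a\in A}x_a\prod_{j\notin A\cup C}y_j$, so it lies in $\widetilde{I_L}$. Your explicit exponent comparison for the binomials and your complement bookkeeping simply spell out details the paper leaves implicit.
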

\begin{proof}
   For any $S,T\subset [n]$, $z_Sz_T-z_{S\cup T}z_{S\cap T}$ is 
   contained in the kernel of $\varphi$ since it is contained in the 
   kernel of $\Phi$. Now for a circuit $C$ and subset 
   $A\subseteq [n]\setminus C$,
    \[\Phi(f_C^A(z))= \sum_{i\in C} \alpha_{C,i} \prod_{i\in A\cup \{i\}}x_i \prod_{j\not\in A\cup \{i\}}y_j = \widetilde{f}_C \prod_{i\in A}x_i \prod_{j \in ([n]\setminus C) \setminus A} y_j \]
    which is contained in $\widetilde{I}_L$ and hence in the kernel of 
    $\varphi_L$.
\end{proof}

For the rest of this section, we will ignore the $q$-grading. 
    \Cref{lem:harmonic-containment} shows that $R_L$ surjects onto 
    $R(Y_L, \O(1, \ldots, 1))$, and so the following lemma gives us a 
    lower bound on $I_L^{SE}$.
    \begin{lemma}\label{lem:ISE-lower-bound}
        We have that $\Hilb(R(Y_L, \O(1, \ldots 1)), t) = \sum_{t \geq 0} m^d 
        T_M(\frac{m+1}{m}, 1) t^m.$
    \end{lemma}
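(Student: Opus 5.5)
The plan is to compute $\dim H^0(Y_L,\O(m,\ldots,m))$ for each $m\geq 0$ and match it with $m^dT_M(\frac{m+1}{m},1)$, which equals $1$ when $m=0$. The statement is for an arbitrary $L\subseteq \C^n$, not only unimodular ones. So I will not route through lattice points; I will route through the thickening and the external zonotopal algebra, which make sense for any $L$.

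First, by \Cref{dilation-lemma}, $H^0(Y_L,\O(m,\ldots,m)) = H^0(Y_{L(m)},\O(1,\ldots,1))$. Second, by \Cref{multiplicity-free-thm} applied to $L(m)\subseteq \C^{mn}$, this space equals the image of the restriction map from $H^0((\P^1)^{mn},\O(1,\ldots,1))$. Via \Cref{sections-subspace}, that image is the span in $\Sym L(m)^\vee$ of the square-free monomials $\prod_{(i,j)\in S} l_{ij}$ for $S\subseteq[m]\times[n]$. This is exactly the chain of equalities in the proof of \Cref{section-ring-is-harmonic-algebra}, and those steps never used unimodularity. Third, by \Cref{external-gens}, this span is $(J_{L(m),1})^\perp$. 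Its dimension is therefore $\dim R^{\text{ext}}_{L(m)}$, which by \Cref{zonotopal-hilbert} at $q=1$ equals $T_{M(m)}(2,1)$.

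Finally, I will convert $T_{M(m)}(2,1)$ into $m^dT_M(\frac{m+1}{m},1)$ using \Cref{prop:thickening} at $x=2$, $y=1$. There $y^{m-1}+\cdots+1=m$ and $y^{m-1}+\cdots+y+x = m+1$, so the formula gives $m^dT_M(\frac{m+1}{m},1)$, as already noted in \Cref{eq:thickening_eq}. Summing over $m$ with weight $t^m$ then gives the claimed Hilbert series, since the $t$-grading of $R(Y_L,\O(1,\ldots,1))$ is by $m$.

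The main obstacle is making sure that the identification of sections with polynomials on $L(m)$ is compatible with the one on $L$ under the diagonal isomorphism $L\cong L(m)$. That compatibility is what \Cref{dilation-lemma} asserts, but it only matters at the level of dimensions here, so I expect it to be harmless.
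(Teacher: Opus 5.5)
Your proposal is correct and follows the paper's proof essentially step for step: \Cref{dilation-lemma}, then \Cref{multiplicity-free-thm} together with the spanning set of \Cref{external-gens} to identify the sections with $(J_{L(m),1})^\perp$, then \Cref{zonotopal-hilbert} at $q=1$, and finally the thickening formula at $(x,y)=(2,1)$. Your remark that the $m=0$ term equals $1$ covers a case the paper leaves implicit, since $L(0)$ and \Cref{dilation-lemma} do not make sense there.
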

    \begin{proof} We have the following chain of equalities. The 
        proof of the third equality is the same as in the proof of 
        \Cref{section-ring-is-harmonic-algebra}.
        \begin{align*}
            \dim R(Y_L, \O(1, \ldots&, 1))_m\\
            &= \dim H^0(Y_L, \O(m, 
            \ldots, m)), && \text{by \Cref{def:section-ring}},\\
                         &= \dim H^0(Y_{L(m)}, \O(1, \ldots, 1)), && 
                         \text{by \Cref{dilation-lemma}},\\
                         &= \dim (J_{L(m)}^{\text{ext}})^\perp && 
                         \text{by 
                         \Cref{external-gens,multiplicity-free-thm}},\\
                         &= \dim R_{L(m)}^\text{ext}, \\
                         &= T_{M(m)}(2,1), && \text{by 
                         \Cref{zonotopal-hilbert}},\\
                         &= m^d T_{M}(\frac{m+1}{m}, 1), && \text{by 
                         \Cref{dilated-zonotope-vs-matroid}}.\qedhere
        \end{align*}
    \end{proof}
    \Cref{lem:ISE-lower-bound} shows that in order to prove 
    \Cref{prop:harmonic-pres}, we just need to prove that  
\[\dim(R_L)_m \leq  m^d T_M\left(\frac{m+1}{m},1\right),\quad \text{for 
all $m \geq 0$}.  \]

We will now work towards reducing this dimension check to the case where $m=1$. Before doing so, we make the following observation.

Recall from \Cref{zero-one-matrix-notation} that we consider a subset $S 
\subseteq [m] \times [n]$ as a $0/1$ matrix. We use the notation $S_{i,*}$ and $S_{*,j}$ to indicate the $i$th row and $j$th column of $S$, thought of as subsets of $[n]$ and $[m]$, respectively. We will also refer to the row support and column support of $S$. These are the sets of indices $i\in [m]$ and $j\in [n]$ such that $|S_{i,*}|>0$ and $|S_{*,j}|>0$, respectively. 

\begin{lemma}\label{lem:multiset-relations}
    Let $S$ and $T$ be subsets of $[m] \times [n]$ such that, for all $j\in[n]$, the number of ones in the $j$th column of $S$ is equal to the number of ones in the $j$th column of $T$. 
    The  binomial 
    \[
        \prod_{i=1}^m z_{S_{i,*}} - \prod_{i=1}^m z_{T_{i,*}}
    \]
    is contained in $I_L^{\text{SE}}$.
\end{lemma}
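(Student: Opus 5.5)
The plan is to work only with the quadratic binomials $z_Sz_T - z_{S\cup T}z_{S\cap T}$; the linear forms $f_C^A$ are not needed. These quadrics generate the toric ideal of the Segre embedding of $(\P^1)^n$, which is the kernel of $\Phi$. Under $\Phi$, the monomial $\prod_{i=1}^m z_{S_{i,*}}$ maps to $\prod_j x_j^{|S_{*,j}|}y_j^{m-|S_{*,j}|}$. So the hypothesis on column counts says exactly that both monomials in the statement have the same image under $\Phi$. The lemma therefore says that the binomial lies in the Segre toric ideal. I will nevertheless give a direct combinatorial argument rather than cite generation of that toric ideal by quadrics.

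The argument is a sorting procedure. Work modulo $I_L^{\text{SE}}$ and call two products $\prod_i z_{S_i}$ and $\prod_i z_{S'_i}$ equivalent if their difference lies in the ideal. Starting from $\prod_{i=1}^m z_{S_{i,*}}$, repeatedly pick two factors $z_{S_a}, z_{S_b}$ that are incomparable under inclusion, and replace them by $z_{S_a\cup S_b}z_{S_a\cap S_b}$. Each replacement changes the product by a multiple of a generator, so the new product is equivalent to the old one. Each replacement also preserves the multiset of column counts, since $\mathbf{1}_{S_a}+\mathbf{1}_{S_b}=\mathbf{1}_{S_a\cup S_b}+\mathbf{1}_{S_a\cap S_b}$.

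To see that the procedure terminates, I use the potential $\sum_i |S_i|^2$. Passing from $\{|S_a|,|S_b|\}$ to $\{|S_a\cup S_b|,|S_a\cap S_b|\}$ keeps the sum of the two sizes fixed and strictly spreads them apart when $S_a,S_b$ are incomparable. So the potential strictly increases, and it is bounded above, hence the procedure stops. When it stops, the factors form a chain $S_{(1)}\supseteq\cdots\supseteq S_{(m)}$. A chain of $m$ sets is determined by its column counts: $j\in S_{(k)}$ if and only if $k\le c_j$, where $c_j$ is the number of factors containing $j$. Hence both $\prod z_{S_{i,*}}$ and $\prod z_{T_{i,*}}$ are equivalent to the same chain monomial, and the difference lies in $I_L^{\text{SE}}$.

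The only delicate point is termination, which the convexity potential handles; the rest is bookkeeping. Note also that the factors may be reordered freely since the polynomial ring is commutative, so the row order of the chain does not matter.
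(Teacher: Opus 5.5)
Your argument is correct and is essentially the paper's: both use only the quadrics $z_Sz_T - z_{S\cup T}z_{S\cap T}$ to rewrite each monomial into the canonical chain monomial $\prod_{k=1}^m z_{\{j : c_j\ge k\}}$, which depends only on the column counts $c_j$. The paper builds this chain deterministically, uncrossing the rows into the last row until it equals $\colsupp(S)$ and then inducting on the remaining $m-1$ rows; you instead allow arbitrary uncrossing moves and get termination from the potential $\sum_i |S_i|^2$, which is a slightly cleaner way to make the paper's ``repeatedly use the binomial relation'' step rigorous.
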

\begin{proof}
    To prove the claim, we will show that the two terms of the binomial are equal in $R_L$. We proceed by induction on the largest number of ones in a column of $S$ (which is equal to the largest number of ones in a column of $T$). In the base case, both terms are equal to $z_\emptyset^m$ and the claim holds. For the inductive case, we can repeatedly use the binomial relation of $I_L^{\text{SE}}$ to add ones to the last row of $S$ and remove ones from the higher rows. In the end, we will construct a subset $S'\subseteq [m]\times [n]$ such that $S'_{m,*}= \colsupp(S)$ and 
    \[\prod_{i=1}^m z_{S_{i,*}} =\prod_{i=1}^m z_{S'_{i,*}} = z_{\colsupp(S)}\prod_{i=1}^{m-1} z_{S'_{i,*}}. \]
    We can do a similar construction to find a $T'\subseteq [m]\times [n]$ such that 
    \[\prod_{i=1}^m z_{T_{i,*}} =\prod_{i=1}^m z_{T'_{i,*}} = z_{\colsupp(T)}\prod_{i=1}^{m-1} z_{T'_{i,*}}. \]
    The submatrix of $S'$ formed by the first $m-1$ rows has exactly $|S_{*,i}|-1$ ones in its $i$th column. Similarly, the submatrix of $T'$ formed by the first $m-1$ rows of $T'$ has exactly $|T_{*,i}|-1$ ones in its $i$th column. As $|S_{*,i}|=|T_{*,i}|$, it follows from our induction hypothesis that 
    \[\prod_{i=1}^{m-1} z_{S'_{i,*}}=\prod_{i=1}^{m-1} z_{T'_{i,*}}. \]
    Furthermore, our assumptions imply that $\colsupp(T)=\colsupp(S)$ so
    \[ \prod_{i=1}^m z_{S_{i,*}} = z_{\colsupp(S)}\prod_{i=1}^{m-1} z_{S'_{i,*}}=z_{\colsupp(T)}\prod_{i=1}^{m-1} z_{T'_{i,*}}=\prod_{i=1}^m z_{T_{i,*}}.\qedhere 
    \]

\end{proof}

Recall from \Cref{dilation-def} that the ground set of the 
$m$-thickening $L(m)$ is $[m] \times [n]$.
\begin{lemma}\label{lem:reduce-to-linear}
There is a surjective linear map
\[\gamma: (R_{L(m)})_1 \twoheadrightarrow (R_L)_m,\quad \gamma (z_S) = 
\prod_{i=1}^m z_{S_{i,*}}. \]
\end{lemma}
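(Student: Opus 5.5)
We define $\gamma$ first on the polynomial level and then check that it descends. Let $\widetilde\gamma:\C[z_S \mid S\subseteq [m]\times[n]]_1 \to \C[z_T \mid T\subseteq[n]]_m$ be the linear map sending $z_S$ to $\prod_{i=1}^m z_{S_{i,*}}$. Composing with the quotient map to $(R_L)_m$, it suffices to prove two things: that the degree-one part of $I_{L(m)}^{\text{SE}}$ is sent into $I_L^{\text{SE}}$, and that the composite is surjective.

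Surjectivity is immediate. $(R_L)_m$ is spanned by the images of monomials $z_{T_1}\cdots z_{T_m}$ with $T_i\subseteq[n]$. Let $S\subseteq[m]\times[n]$ be the matrix whose $i$th row is $T_i$. Then $\widetilde\gamma(z_S)=z_{T_1}\cdots z_{T_m}$.

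For well-definedness, note that the degree-one part of $I_{L(m)}^{\text{SE}}$ is spanned by the linear forms $f_{C'}^{A'}$, where $C'$ is a circuit of $M(m)$ and $A'\subseteq([m]\times[n])\setminus C'$. The binomial generators lie in degree two, so they do not matter here. We first describe the circuits of $M(m)$, since $L(m)$ is the diagonal image of $L$. There are two kinds.
\begin{itemize}
\item Parallel pairs $\{(a,j),(b,j)\}$ with $a\neq b$, coming from the relation $e_{(a,j)}-e_{(b,j)}$ on the columns of $A(m)$.
\item Sets $\{(k_i,i) : i\in C\}$ consisting of one copy of each element of a circuit $C$ of $M$, with coefficients $\alpha_{C,i}$.
\end{itemize}
This follows from the basis description in \Cref{dilation-def}: a minimal dependent set either has two elements in the same column, hence is a parallel pair, or has distinct columns forming a circuit of $M$.

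For a parallel pair, $f_{C'}^{A'}=z_{A'\cup(a,j)}-z_{A'\cup(b,j)}$. Its image under $\widetilde\gamma$ is a difference of two monomials whose index matrices have equal column counts, because $(a,j)$ and $(b,j)$ lie in the same column. By \Cref{lem:multiset-relations} this difference lies in $I_L^{\text{SE}}$.

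For a circuit of the second kind, $f_{C'}^{A'}=\sum_{i\in C}\alpha_{C,i}\,z_{A'\cup\{(k_i,i)\}}$. We want to move every new entry into a single row. For each $i\in C$, the index matrix $A'\cup\{(k_i,i)\}$ has the same column counts as the matrix $B\cup\{(1,i)\}$ for a suitable $B$, chosen so that row $1$ of $B$ avoids $C$. Such a $B$ exists because the column counts of $A'$ on $C$ are at most $m-1$, since $A'$ misses $(k_i,i)$. Concretely, in each column $j$ we place the ones of $A'$ on rows $2,\dots$ when $j\in C$; columns outside $C$ are kept as in $A'$. Applying \Cref{lem:multiset-relations} to each term, $\widetilde\gamma(f_{C'}^{A'})$ is congruent modulo $I_L^{\text{SE}}$ to
\[\Big(\prod_{r\geq 2} z_{B_{r,*}}\Big)\sum_{i\in C}\alpha_{C,i}\, z_{B_{1,*}\cup\{i\}} \;=\; \Big(\prod_{r\ge2} z_{B_{r,*}}\Big) f_C^{B_{1,*}}.\]
The right-hand side lies in $I_L^{\text{SE}}$ because $B_{1,*}\subseteq[n]\setminus C$.

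The main obstacle is making the choice of $B$ independent of $i$, so that all the terms share the same factor $\prod_{r\geq2}z_{B_{r,*}}$. To arrange this, fix once and for all a rearrangement of the columns $j\notin C$, and for $j\in C$ let $B$ carry the $|A'_{*,j}|$ ones of $A'$ in rows $2,\dots,|A'_{*,j}|+1$. Then $A'\cup\{(k_i,i)\}$ and $B\cup\{(1,i)\}$ have identical column counts for every $i\in C$, as required. Finally, we will check that the circuit coefficients of $L(m)$ for the transversal circuit are indeed $\alpha_{C,i}$, which holds because the columns of $A(m)$ are copies of the columns of $A$.
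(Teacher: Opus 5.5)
Your proposal is correct and follows essentially the same route as the paper. Surjectivity comes from $\tilde\gamma$ hitting every degree-$m$ monomial. Well-definedness comes from checking the two families of degree-one generators of $I_{L(m)}^{\text{SE}}$: parallel pairs via \Cref{lem:multiset-relations}, and transversals of circuits of $M$ by using \Cref{lem:multiset-relations} to rewrite each term as a common monomial times $z_{X\cup\{i\}}$ with $X\cap C=\emptyset$, so that the sum factors through $f_C^X$. The only cosmetic difference is that you fix a single matrix $B$ and always place the new entry in row $1$ (taking $X=B_{1,*}$), whereas the paper builds an $(i,j)$-dependent $A'$ whose $i$th row is $(\colsupp(A)\setminus\colsupp(C))\cup\{j\}$ and then observes that the remaining column counts do not depend on $(i,j)$.
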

\begin{proof}
First, we observe that the map
\[
    \tilde \gamma: \mathbb{C}[z_S\vert S\subseteq [m]\times [n]]_1 \to 
\mathbb{C}[z_S\vert S\subseteq [n]]_m,\quad \tilde \gamma (z_S) = 
\prod_{i=1}^m z_{S_{i,*}}
\]
 is surjective. Therefore to prove the lemma, we just need to check that $\gamma$ is well defined. We do this by checking that $\tilde \gamma ( I_{L(m)}^{\text{SE}})_1 \subseteq (I_L^{\text{SE}})_m$. 
By \Cref{dilation-def}, the space $( I_{L(m)}^{\text{SE}})_1$ is spanned by the set of all linear functionals 
\[
    f_C^A(z) = \sum_{(i,j) \in C}\alpha_{\operatorname{colsupp}(C), 
    j}z_{A \cup (i,j)}\quad \text{and}\quad f_{k,\ell}^D = z_{D \cup (k,\ell)} 
    - z_{D \cup (k+1,\ell)},
\]
where 
\begin{itemize}
    \item $C\subseteq[m]\times[n]$ is such that $|C_{*,j}|\leq 1$ for all $j\in [n]$ and $\colsupp(C)$ is a circuit of $M$,
    \item $A\subseteq [m]\times [n]$ is disjoint from $C$,
    \item $(k,\ell)\in [m-1]\times [n]$, and
    \item $D\subseteq [m]\times [n]$ is disjoint from $\{(k,\ell), (k+1,\ell)\}$.
\end{itemize} 

The sets $D\cup (k,\ell)$ and $D\cup (k+1,\ell)$ have equal number of ones in each of their columns. Therefore, by \Cref{lem:multiset-relations},
\[\tilde\gamma(f_{k,\ell}^D) = \prod_{i=1}^m z_{\left (D\cup (k,\ell) \right)_{i,*}} - \prod_{i=1}^m z_{\left (D\cup (k,\ell+1) \right)_{i,*}} \in I_L^{\text{SE}}. \]
Now consider the image of $f_C^A(z)$. We claim 
that $\tilde\gamma(f_C^A(z))$ is divisible by the linear functional 
$f_{\operatorname{colsupp}(C)}^{\operatorname{colsupp}(A) \setminus 
\operatorname{colsupp}(C)}(z) \in I_L^{\text{SE}}$, which will 
prove our claim. By definition,
\begin{equation}\label{eq:to_transform}
\tilde\gamma(f_C^A(z)) = \sum_{(i,j) \in C} 
\alpha_{\operatorname{colsupp}(C),j} \prod_{k=1}^m z_{\left(A\cup (i,j)\right)_{k,*}}\,.
\end{equation}

We now transform the monomials appearing in the right hand side of \Cref{eq:to_transform} 
using \Cref{lem:multiset-relations}. Fix $(i,j) \in C$, and consider the 
monomial of the sum corresponding to the subset 
$A \cup (i,j)$. 
Notice that $A \cup (i,j)$ has at least one zero in each column of 
$\operatorname{colsupp}(C) \setminus j$, since $C \cap A = \emptyset$.
This observation ensures that there exists a set $A'\subseteq [m]\times [n]$ such that $|(A\cup (i,j))_{*,k}| = |A'_{*,k}|$ for all $k\in [n]$, and the $i$th row of $A'$ is equal to $A'_{i,*}= \left(\colsupp(A)\setminus \colsupp(C)\right) \cup \{j\}$. The set $A'$ can constructed from $A\cup (i,j)$ by first moving ones into its $i$th row so that the $i$th row is equal to $\colsupp(A)\cup \{j\}$ and then moving ones in the columns of $\colsupp(C)\setminus \{j\}$ off of this row and into the gap guaranteed by the disjointedness of $C$ and $A$.
See \Cref{circuit-equation-move} for an illustration. By \Cref{lem:multiset-relations},
\[ \prod_{k=1}^m z_{\left(A\cup (i,j)\right)_{k,*}} = \prod_{k=1}^m z_{A'_{k,*}} = z_{\left(\colsupp(A)\setminus \colsupp(C)\right) \cup \{j\}} \prod_{k\in [m]\setminus i} z_{A'_{k,*}}.\]
Let $\tilde A$ be the submatrix of $A'$ formed by removing its $i$th row. By construction,
\[|\tilde A_{*,k}| = \begin{cases}
    |A_{*,k}|-1,& k \in \colsupp(A)\setminus \colsupp(C)\\
    |A_{*,k}|,& \text{ else.}
\end{cases}  \]
Note that the number of ones in each column of $\tilde A$ does not depend on our choice of $(i,j)\in C$. Thus by \Cref{lem:multiset-relations}, the monomial $\prod_{k\in [m]\setminus i} z_{A'_{k,*}}$, up to the relations of $I_L^{\text{SE}}$, does not depend on our choice of $(i,j)$. Let $g\in \mathbb{C}[z_S\vert S\subseteq [n]]$ be a representative of this monomial. Our computations now show that
\[\tilde\gamma(f_C^A(z))=g \sum_{(i,j) \in C} \alpha_{\operatorname{colsupp}(C),j} z_{\left(\colsupp(A)\setminus \colsupp(C)\right) \cup \{j\}} = g f_{\operatorname{colsupp}(C)}^{\operatorname{colsupp}(A) \setminus 
\operatorname{colsupp}(C)}(z), \]
up to the relations in $I_L^{\text{SE}}$.
\end{proof}

\begin{figure}[h!]
\[
A \cup (2,3) = \begin{pmatrix}
    \boxed{0} & \boxed{0} & 0 & 1 & 1 \\
    0 & 1 & \boxed{1} & \boxed{0} & 0 \\
    0 & 0 & 0 & 1 & 1
\end{pmatrix},\quad 
A'  = \begin{pmatrix}
    \boxed{0} & \boxed{\red{1}} & 0 & 1 & 1 \\
    0 & \red{0} & \boxed{1} & \boxed{0} & \red{1} \\
    0 & 0 & 0 & 1 & \red{0}
\end{pmatrix}
\begin{array}{c}
\\[-1.2ex]
\longleftarrow i=2\\
\\[-1.2ex]
\end{array}
\]
\caption{
    An example from the proof of \Cref{lem:reduce-to-linear}, where $m = 
    3$, $n = 5$, $(i,j)=(2,3)$, and $C$ is indicated by the boxed entries. The red entries highlight which elements of the columns we've altered to construct $A'$. The matrix $A'$ has the property that its second row is equal to the set $(\colsupp(A)\setminus\colsupp(C)) \cup \{3\} = \{3,5\}$. 
}\label{circuit-equation-move}
\end{figure}

Lemma \ref{lem:reduce-to-linear} lets us reduce the proof of Proposition \ref{prop:harmonic-pres} to analyzing the degree one case. Our analysis of the degree one case follows a deletion-contraction argument similar in spirit to the proof of \cite[Proposition 4.4]{AP10}.

\begin{lemma}\label{lem:dim1-case}
The dimension of $(R_L)_1$ is $T_M(2,1)$, the number of independent sets of $M$.
\end{lemma}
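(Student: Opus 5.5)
We only need to prove the upper bound $\dim (R_L)_1 \le T_M(2,1)$. Indeed, \Cref{lem:harmonic-containment} gives a surjection $(R_L)_1 \twoheadrightarrow R(Y_L,\O(1,\ldots,1))_1$, and the case $m=1$ of \Cref{lem:ISE-lower-bound} shows that the target has dimension $T_M(2,1)$.

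Since the binomials $z_Sz_T-z_{S\cup T}z_{S\cap T}$ are quadratic, $(R_L)_1$ is the vector space $V_L := \C\{z_S : S\subseteq[n]\}/\langle f_C^A\rangle$. The generators $f_C^A$ range over circuits $C$ and sets $A\subseteq[n]\setminus C$. I will bound $\dim V_L$ by deletion/contraction, following \Cref{prop:tutte_def} and in the spirit of \cite[Proposition 4.4]{AP10}.

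\textbf{Base case.} If every element is a loop or a coloop, $T_M(2,1)=2^a$, where $a$ is the number of coloops. Each loop $\ell$ is a circuit $\{\ell\}$, so $f_{\{\ell\}}^A=\alpha z_{A\cup \ell}$ kills every $z_S$ with $S$ containing a loop. Hence $V_L$ is spanned by the $2^a$ elements $z_S$ with $S$ a set of coloops, and the bound holds.

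\textbf{Inductive step.} Fix $e$ that is neither a loop nor a coloop. Let $W\subseteq V_L$ be the span of the $z_S$ with $e\notin S$. We aim to show
\[
\dim W \le \dim V_{L\setminus e} \quad\text{and}\quad \dim V_L/W\le \dim V_{L/e}.
\]
Induction and $T_M(2,1)=T_{M\setminus e}(2,1)+T_{M/e}(2,1)$ then finish the proof.

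For the first inequality, define $V_{L\setminus e}\to W$ by $z_S\mapsto z_S$. The circuits of $M\setminus e$ are exactly the circuits of $M$ avoiding $e$. The deletion $L\setminus e=\pr_{[n]\setminus e}(L)$ keeps the same columns $A_i$, so it has the same dependence coefficients $\alpha_{C,i}$. Every relation $f_C^A$ of $L\setminus e$ is therefore literally a relation of $L$, so the map is well defined, and it is clearly surjective.

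For the second inequality, define $V_{L/e}\to V_L/W$ by $z_T\mapsto z_{T\cup e}$; it is surjective. The work is to check that every relation $f_D^B$ of $L/e$, with $D$ a circuit of $M/e$ and $B\subseteq [n]\setminus(D\cup e)$, maps into $W$. There are two kinds of circuit of $M/e$.
\begin{itemize}
\item If $D$ is also a circuit of $M$, then $e\notin D$, the coefficients agree, and the image is exactly $f_D^{B\cup e}$, which is zero in $V_L$.
\item If $D\cup e=C$ is a circuit of $M$, then the induced dependence of $D$ in $L/e$ has coefficients $\alpha_{C,i}$ for $i\in D$, and we must show that $\sum_{i\in D}\alpha_{C,i}z_{B\cup e\cup i}$ lies in $W$.
\end{itemize}

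\textbf{Main obstacle.} The second case is where I expect the difficulty. The naive relation $f_C^{B}$ only involves $z_{B\cup i}\in W$ and $z_{B\cup e}$, so it does not produce the required terms. My first attempt will combine relations $f_{C'}^{A'}$ for circuits $C'$ obtained from $C$ by circuit elimination against circuits through the elements of $B$. Alternatively, I will induct on $|B|$, using that each $z_{X\cup e}$ with $X\cap C=\emptyset$ is congruent modulo $W$ to something expressible via $f_C^X$. If this direct verification becomes unwieldy, a fallback is to reverse the roles: take $W'$ to be the span of the $z_S$ with $e\in S$, compare it to $V_{L/e}$ via $z_T\mapsto z_{T\cup e}$, and compare the quotient $V_L/W'$ to $V_{L\setminus e}$. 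The relations $f_C^A$ with $e\in A$ and $e\notin C$ then transfer more directly.
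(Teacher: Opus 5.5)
\textbf{There is a genuine gap in the inductive step.} Your reduction to the upper bound, your base case, and the inequality $\dim W\le\dim V_{L\setminus e}$ are fine. The trouble is that the inductive step pairs the two pieces of $V_L$ with the wrong minors, and the membership you flag as the main obstacle is false.

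Take $L=\C\cdot(1,1)\subseteq\C^2$ and $e=2$. The only circuit is $C=\{1,2\}$, and the only degree-one relation of $L$ is $z_1-z_2$. So $V_L$ has basis $z_\emptyset,z_1,z_{12}$, and $W=\text{span}(z_\emptyset,z_1)$. In $L/e=0$ the element $1$ is a loop, $D=\{1\}=C\setminus e$, and $f_D^{\emptyset}=z_1$ is a relation of $L/e$. Its image under $z_T\mapsto z_{T\cup e}$ is $z_{12}\notin W$.

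So this map does not descend to a map $V_{L/e}\to V_L/W$. It even sends the basis vector $z_\emptyset$ of $V_{L/e}$ to $z_2\equiv z_1\in W$. No circuit-elimination argument or induction on $|B|$ can establish your second bullet. The count $\dim V_L/W=T_{M/e}(2,1)$ does turn out to be true once the lemma is known, but this map cannot prove it.

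Your fallback, as literally stated, also fails. In the same example $W'=\text{span}(z_2,z_{12})$ has dimension $2$ while $\dim V_{L/e}=1$, so there is no surjection $V_{L/e}\to W'$.

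\textbf{The pairing that works is the opposite one, and it is what the paper uses.} The $z_S$ with $e\in S$ go with the \emph{deletion}, and the quotient by them goes with the \emph{contraction}.

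For the deletion: $z_T\mapsto z_{T\cup e}$ sends each relation $f_C^A$ of $L\setminus e$ (so $e\notin C$) to $f_C^{A\cup e}$. These are precisely the relations with $e\in A$, $e\notin C$ that you noticed transfer cleanly. Hence $W'$ is a quotient of $V_{L\setminus e}$.

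For the contraction: $V_L/W'$ is the span of the $z_S$ with $e\notin S$ modulo $\pi(I_L^{\text{SE}})_1$, where $\pi$ kills every $z_S$ with $e\in S$.
\begin{itemize}
\item For a circuit $C\ni e$, we get $\pi(f_C^B)=f_{C\setminus e}^B$, because the one offending term $\alpha_{C,e}z_{B\cup e}$ dies.
\item For a circuit $D\not\ni e$, we get $\pi(f_D^B)=f_D^B$.
\end{itemize}
Thus every relation of $L/e$ lies in $\pi(I_L^{\text{SE}})_1$, and $V_L/W'$ is a quotient of $V_{L/e}$. The bound $\dim V_L\le T_{M\setminus e}(2,1)+T_{M/e}(2,1)$ then follows by induction.

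The heuristic behind this: under $z_S\mapsto\prod_{i\in S}l_i$, the span of the $z_S$ with $e\in S$ is $l_e$ times the deletion space. Passing to the quotient corresponds to restriction to the hyperplane $L/e=\{l_e=0\}$. Multiplying by $l_e$ cannot model contraction, since restriction to $L/e$ kills $l_e$.
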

\begin{proof}
    We prove the statement by induction on the number of elements $i\in [n]$ which are neither loops nor coloops. In the base case, every element is either a loop or coloop and $(I^{\text{SE}}_L)_1$ is spanned by the variables
    \[\{ z_S : S \text{ contains a loop}\}. \]
    In this case, $(R_L)_1$ has basis $\{z_{T}: T \text{ does not contain a loop}\}$. As every element which is not a loop is instead a coloop, this means that $\dim(R_L)_1$ is equal to the number of independent sets of $M$.

    Now suppose that $i$ is an element of $[n]$ which is neither a 
    loop nor a coloop. By our inductive hypothesis, we can assume 
    that the claim is true for $L/i$ and $L\setminus i$. Consider the 
    short exact sequence
\[\begin{tikzcd}
0 \arrow[r] & {\mathbb{C}[z_S\ordinarycolon S\subseteq [n]\setminus \{i\}]_1} \arrow[r, "\iota"] & {\mathbb{C}[z_S\ordinarycolon S\subseteq [n]]_1} \arrow[r, "\pi"] & {\mathbb{C}[z_S\ordinarycolon S\subseteq [n]\setminus \{i\}]_1} \arrow[r] & 0
\end{tikzcd}\]
\[\pi(z_S):=\begin{cases}
    z_S, &i\not\in S\\
    0, &\text{else}
\end{cases} \quad\text{and}\quad \iota(z_S) := z_{S\cup i}.\]
We claim that this sequence induces the following diagram whose rows are exact:
\[\begin{tikzcd}
0 \arrow[r] & \left(I_{L\setminus i}^{\text{SE}}\right)_1 \arrow[d, hook] \arrow[r]                                                  & \left(I_{L}^{\text{SE}}\right)_1 \arrow[d, hook] \arrow[r]                                            & \left(I_{L/ i}^{\text{SE}}\right)_1 \arrow[d, hook] \arrow[r]                                                 & 0 \\
0 \arrow[r] & {\mathbb{C}[z_S\ordinarycolon S\subseteq [n]\setminus \{i\}]_1} \arrow[r, "\iota"] \arrow[d, two heads] & {\mathbb{C}[z_S\ordinarycolon S\subseteq [n]]_1} \arrow[r, "\pi"] \arrow[d, two heads] & {\mathbb{C}[z_S\ordinarycolon S\subseteq [n]\setminus \{i\}]_1} \arrow[r] \arrow[d, two heads] & 0 \\
0 \arrow[r] & \left(R_{L\setminus i}\right)_1 \arrow[r]                                                                              & \left(R_L\right)_1 \arrow[r]                                                                          & \left(R_{L/i}\right)_1 \arrow[r]                                                                              & 0
\end{tikzcd}\, .\]
Along the way to proving that this diagram is exact, we will obtain our 
claim. We first verify that $\iota( I_{L\setminus i}^{\text{SE}})_1 
\subseteq \ker(\pi)\cap (I_L^{\text{SE}})_1$ and $\pi(I_L^{\text{SE}})_1 
\subseteq (I_{L/i}^{\text{SE}})_1$ so that our diagram is well defined. 
The subspace $(I_L^{\text{SE}})_1$ is spanned by the linear functions 
$f_C^A(z)$ for $C$ a circuit of $L$ and $A\subseteq [n]\setminus C$. 
The circuits of the deletion $M\setminus i$ are those 
circuits of $M$ that do not contain $i$, and the circuits of the 
contration $M/i$ are minimal nonempty sets $C\setminus i$ where $C$ is a 
circuit of $M$ \cite[Proposision 3.1.10 and Claim 
3.1.13]{oxleyMatroidTheory2011}. Therefore, the subspaces $(I_{L\setminus 
i}^{\text{SE}})_1$ and $(I_{L/i}^{\text{SE}})_1$ are spanned by the sets 
of linear functions
\[\{f^A_C(z): i\not\in C, A\subseteq ([n]\setminus i) \setminus C\} \quad\text{and}\quad \{f^A_{C\setminus i}(z): A\subseteq ([n]\setminus i) \setminus (C\setminus i)\} ,\]
respectively. The map $\iota$ sends a linear function $f^A_C(z)$ to $f^{A\cup i}_C(z)$. As $i\not\in C$, $f^{A\cup i}_C(z)\in (I_L^{\text{SE}})_1$. As every variable in the support of $f^{A\cup i}_C(z)$ is indexed by a set containing $i$, $f^{A\cup i}_C(z) \in \ker(\pi)$. This proves our first claimed inclusion.

For the second inclusion, note that $\pi(f_C^A(z))$ is zero when $i\in 
A$. If $i\not\in A$, then $\pi(f_C^A(z)) = f_{C\setminus i}^A(z)$. In 
fact, this computation shows that $(I_L^{\text{SE}})_1$ surjects onto 
$(I_{L/i}^{\text{SE}})_1$. Therefore, we have shown that the top row of 
our diagram is exact, except possibly at $(I_L^\text{SE})_1$. Here, we 
will use our induction hypothesis. Our computations thus far have shown 
that $\dim(I_L^{\text{SE}})_1 \geq \dim(I_{L\setminus i}^{\text{SE}})_1 
+ \dim(I_{L\setminus i}^{\text{SE}})_1$. The top row of our diagram will 
be exact if this inequality is an equality. We can also compute that
\begin{align*}
    \dim(R_L)_1 &= \mathbb{C}[z_S\ordinarycolon S\subseteq [n]]_1 - 
    \dim(I_L^{\text{SE}})_1\\
                                                                 &\leq 2\cdot 2^{n-1} - \dim(I_{L\setminus 
i}^{\text{SE}})_1 - \dim(I_{L\setminus i}^{\text{SE}})_1\\ 
&= 
\dim(R_{L\setminus i})_1 + \dim(R_{L/i})_1.
\end{align*}
Our induction hypothesis tells us that 
\[\dim(R_{L\setminus i})_1 + \dim(R_{L/i})_1 = T_{M\setminus i}(2,1) + T_{M/i}(2,1) = T_{M}(2,1). \]
Thus we have shown that $\dim (R_L)_1 \leq T_M(2,1)$ and equality is only obtained if the top row of our diagram is exact. However, by \Cref{lem:harmonic-containment,lem:ISE-lower-bound}, we already know that $\dim(R_L)_1 \geq T_M(2,1)$. Thus $\dim(R_L)_1 = T_M(2,1)$ and the top row of our diagram is exact. Exactness of the entire diagram follows from an application of the snake lemma.
\end{proof}

\begin{remark}
    The vector space $(R_L)_1$ is dual to the subspace $L^{\text{SE}} \subseteq \mathbb{C}^{2^{[n]}}$ cut out by the linear functions $f_C^A(v)$. \Cref{lem:dim1-case} tells us that the rank of the matroid $M^{\text{SE}}$ of $L^\text{SE}$ is equal to the number of independent sets of $M$. It would be interesting to describe the matroid $M^{\text{SE}}$. The collection
    \[\{\{i \cup A: i\in C\}: \text{$C$ a circuit of $M$}, A\subseteq [n]\setminus C\} \]
    is contained in the set of circuits of $M^{\text{SE}}$ but this containment is often strict. If one could describe the set of circuits of $M^\text{SE}$, this could give insight into the bases of $M^\text{SE}$. Explicitly constructing a basis of $M^{\text{SE}}$ would give a satisfying alternative proof of \Cref{lem:dim1-case}.
\end{remark}

We are now able to prove \Cref{prop:harmonic-pres}.

\begin{proof}[Proof of \Cref{prop:harmonic-pres}]
As noted earlier, \Cref{lem:harmonic-containment} reduces our claim to 
checking that $\dim(R_L)_m \leq m^d T_M(\frac{m+1}{m},1)$ for all $m\geq 
0$. By \Cref{lem:reduce-to-linear}, \Cref{lem:dim1-case} and \Cref{dilated-zonotope-vs-matroid},
\[\dim(R_L)_m \leq \dim(R_{L(m)})_1 = T_{M(m)}(2,1) = m^d 
T_M(\frac{m+1}{m},1).\qedhere\]
\end{proof}

\section{Gorenstein Harmonic Algebras}\label{Gorenstein-section}
In this section, we characterize the unimodular zonotopes $Z = A\cdot [0,1]^n \subseteq \R^d$ whose harmonic algebras $\mathcal{H}_Z$ are Gorenstein in terms of the matroid $M$ of $A$. We then show that if $\mathcal{H}_Z$ is Gorenstein, the numerator of the graded Ehrhart series of $Z$ displays a remarkable palindromic symmetry.

Recall that, by \Cref{section-ring-is-harmonic-algebra}, we know $\widetilde {\H_Z}[0,d]\simeq \Omega\mathcal{H}_Z$. Thus, $\mathcal{H}_Z$ is Gorenstein if and only if $\mathcal{H}_Z$ is isomorphic to $\widetilde{\mathcal{H}_Z}$ as graded $\mathcal{H}_Z$-modules (possibly with a degree shift). This observation is the starting point of our proof of \Cref{thrm:gor}. We now collect a series of lemmas to analyze the interior ideal $\widetilde{\mathcal{H}_Z}$ of $Z$.

\begin{lemma}\label{lem:domain}
The harmonic algebra $\mathcal{H}_Z$ is an integral domain. 
\end{lemma}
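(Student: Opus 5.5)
The most direct route is the concrete description of $\H_Z$ as a subring of a polynomial ring. By \Cref{defn:harm}, $\H_Z$ is, by construction, a bigraded subspace of $\C[x_0,\ldots,x_d]$, and by the theorem of \cite{RR24} its multiplication is the restriction of ordinary polynomial multiplication: $(x_0^i\otimes f)(x_0^j\otimes g) = x_0^{i+j}\otimes fg$. Thus $\H_Z$ is a subring of the polynomial ring $\C[x_0,\ldots,x_d]$. Any subring of an integral domain is an integral domain, and the polynomial ring over $\C$ has no zero divisors. The unit $1 = x_0^0\otimes 1$ lies in $\H_Z$ because $V_{0\cdot Z}$ contains the constants. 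This already gives the lemma.

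To make the identification airtight, I would also phrase the argument through \Cref{section-ring-is-harmonic-algebra}. That result identifies $\H_Z$ with $R(Y_L,\O(1,\ldots,1))$ as a subalgebra of $\Sym L^\vee\otimes \C[x_0]$, via \Cref{sections-subspace}. Since $\Sym L^\vee\otimes\C[x_0]$ is a polynomial ring in $d+1$ variables, the subalgebra is a domain.

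As a geometric cross-check, $Y_L$ is the closure of the irreducible variety $L\cong\C^d$, so $Y_L$ is irreducible. It is also reduced, since it is taken with its reduced closure structure. By \Cref{Y-ring-and-module}, $\H_Z$ is the homogeneous coordinate ring of an irreducible reduced projective variety, so its defining ideal is prime.

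The only point requiring any care is that the multiplication on $\H_Z$ really is the restriction of polynomial multiplication, rather than an abstractly transported product. This holds both in \cite{RR24}'s definition and in the subalgebra identification of \Cref{section-ring-is-harmonic-algebra}, so I expect no genuine obstacle.
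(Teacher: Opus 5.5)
Your proof is correct, and your main route differs from the paper's.

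The paper's proof is exactly your third paragraph. It invokes the identification of $\mathcal{H}_Z$ with the homogeneous coordinate ring of $Y_L \subseteq \P^{2^n-1}$, and uses that $Y_L$, as the closure of $L$, is irreducible. That is a one-line argument only because \Cref{thrm:geometry} is already established. That identification in turn rests on Brion's multiplicity-free theorem, the inverse-system description of the external zonotopal algebra, and the unimodularity hypothesis (through \Cref{thrm:zon_orbit}).

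Your first paragraph needs none of this. By \Cref{defn:harm} and the Reiner--Rhoades product rule, $\mathcal{H}_Z$ is a subring of $\C[x_0,\ldots,x_d]$. It contains $1$, since $V_{0\cdot Z}=\C$, so it is a domain. This is more elementary and also more general: it shows that $\mathcal{H}_P$ is a domain for every lattice polytope $P$, including the non-finitely-generated examples of Cavey. The paper's route costs nothing extra in context, since the geometric identification is proved anyway and is needed for the canonical-module statements in \Cref{Gorenstein-section}.

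Your second paragraph is the same subring observation, transported through \Cref{section-ring-is-harmonic-algebra}. Given the first paragraph, it is redundant. In the geometric cross-check, you are right to note that $Y_L$ is reduced as well as irreducible; integrality is what makes the coordinate ring a domain.
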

\begin{proof}
  By \Cref{cor:harmonic_pres}, $\mathcal{H}_Z$ is the homogeneous coordinate ring of $Y_L\subseteq \P^{2^n-1}$ under the Segre embedding. The variety $Y_L$ is irreducible, hence $\mathcal{H}_Z$ contains no zero divisors.
\end{proof}

\begin{lemma}\label{lem:sum-tutte}
  Let $M$ be a matroid with $k$ connected components such that each connected component is also a circuit of $M$. Let $d_1,d_2,\ldots , d_k$ be the ranks of these respective components.
  The Tutte polynomial of $M$ is equal to
  \[T_M(x,y) = \prod_{i=1}^k (y+x+x^2+\ldots + x^{d_i}). \]
\end{lemma}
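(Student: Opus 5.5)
The plan is to reduce to a single circuit and then compute that case directly with the deletion/contraction recursion of \Cref{prop:tutte_def}.

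\textbf{Reduction to one component.} The Tutte polynomial is multiplicative over direct sums: $T_{M_1\oplus M_2}=T_{M_1}T_{M_2}$. This is standard (see \cite{welshTuttePolynomial1999}), and it can also be checked by induction on the number of non-loop, non-coloop elements using \Cref{prop:tutte_def}. For that induction, note that deleting or contracting an element of one summand commutes with the direct sum, and loops and coloops are determined componentwise. A matroid is the direct sum of its connected components, so it suffices to show that if $M=U_{d,d+1}$ is a single circuit of size $d+1$ (rank $d$), then
\[T_{U_{d,d+1}}(x,y)=y+x+x^2+\cdots+x^d.\]

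\textbf{The single-circuit case.} We induct on $d\ge 1$; the degenerate case $d=0$, a single loop, gives $T=y$, which is consistent with the formula. Pick an element $i$ of the circuit. When $d\ge1$, $i$ is neither a loop nor a coloop, because the circuit has at least two elements and every $d$-subset is a basis.

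The deletion $M\setminus i$ is $U_{d,d}$, the free matroid on $d$ coloops, so $T_{M\setminus i}=x^d$.

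The contraction $M/i$ is $U_{d-1,d}$, a circuit of size $d$ (rank $d-1$). Its bases are the $(d-1)$-subsets of the remaining $d$ elements, since each $B\setminus i$ with $i\in B$ is such a subset and every such subset arises. By induction, $T_{M/i}=y+x+\cdots+x^{d-1}$.

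Adding the two gives $y+x+\cdots+x^d$. The base case $d=1$ is $U_{1,2}$: deleting an element leaves a coloop, with $T=x$, and contracting leaves a loop, with $T=y$, so $T=x+y$ as claimed.

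\textbf{Main obstacle.} Everything here is routine. The only point needing care is identifying $M/i\cong U_{d-1,d}$ and $M\setminus i\cong U_{d,d}$ from the basis descriptions in the definitions of deletion and contraction, and making sure the multiplicativity statement is justified. If multiplicativity is not taken as known, I would prove it by the induction sketched in the first paragraph. The base case of that induction, where every element is a loop or coloop, is immediate from the formula $x^ay^b$.
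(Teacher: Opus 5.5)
Your proposal is correct and takes the same route as the paper: you reduce to a single circuit using multiplicativity of the Tutte polynomial over connected components, then compute $T_{U_{d,d+1}}(x,y)=y+x+\cdots+x^d$ by deletion/contraction. You also supply details the paper leaves to the reader, namely the identifications $M\setminus i\cong U_{d,d}$ and $M/i\cong U_{d-1,d}$ and a justification of multiplicativity.
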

\begin{proof}
The Tutte polynomial of a matroid is equal to the product of the Tutte polynomials of its connected components. Therefore, it suffices to prove that $T_{U_{n-1,n}}(x,y) = y+x+x^2+\ldots + x^{n-1}$. This, in turn, follows from a deletion-contraction argument.
\end{proof}

In the following lemma, we use the presentation of $\mathcal{H}_Z$ given in \Cref{prop:harmonic-pres}.
\begin{lemma}\label{lem:empty_contained}
If $M$ contains no coloops, the term $z_\emptyset$ represents an element of the interior ideal $\widetilde{\mathcal{H}_Z}$. In any case, the term $z_{\emptyset}^2$ represents an element of the interior ideal $\widetilde{\mathcal{H}_Z}$.
\end{lemma}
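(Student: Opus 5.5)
We work throughout inside $\Sym L^\vee \otimes \C[x_0]$, using the identifications from the proof of \Cref{section-ring-is-harmonic-algebra} and the presentation of \Cref{prop:harmonic-pres}. The first step is to identify what $z_\emptyset$ is there. Under $\Phi$ and restriction to the affine chart $L \subseteq Y_L$ (where all $y_i=1$), the variable $z_S$ maps to $x_0\prod_{i\in S} l_i$. Hence $z_\emptyset$ corresponds to $x_0\cdot 1$, the constant polynomial $1$ in $(\H_Z)_{(1,0)}=V_Z$. Likewise $z_\emptyset^2$ corresponds to $x_0^2\cdot 1$ in $V_{2Z}$. By \Cref{defn:harm}, we must therefore show two things: that $1\in \widetilde{V_Z}$ when $M$ has no coloops, and that $1\in\widetilde{V_{2Z}}$ always.

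By \Cref{thrm:zon_orbit} and \Cref{lem:fin_gen_set}, we have $\widetilde{V_{mZ}}=(J^{\text{int}}_{mZ})^\perp$, and $J^{\text{int}}_{mZ}$ is generated by the powers $v^{m(v)-1}$, where $v$ runs over cocircuit vectors of $L(m)$. The constant $1$ lies in $I^\perp$ exactly when $I$ has no nonzero degree-$0$ elements, that is, when $I$ is a proper ideal. So it suffices to check that no generator $v^{m(v)-1}$ is a nonzero constant, which means showing $m(v)\geq 2$ for every cocircuit vector $v$.

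For $m=1$, a cocircuit of $M$ of size $1$ is exactly a coloop. So if $M$ has no coloops, every cocircuit has size at least $2$, each generator has degree at least $1$, and therefore $1\in\widetilde{V_Z}$.

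For $m=2$, every cocircuit vector of $L(2)$ is the diagonal image of a cocircuit vector of $L$. Its support is $[2]\times \text{Supp}(v)$, which has size $2m(v)\geq 2$. So no generator is constant, and $1\in\widetilde{V_{2Z}}$.

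The main point needing care is the identification of $z_\emptyset$ with $x_0\cdot 1$ and of $z_\emptyset^2$ with $x_0^2\cdot 1$. This requires tracing the compatible isomorphisms between the Segre coordinate ring, the section ring, and $\H_Z$, making sure that $\prod_{i\notin\emptyset}y_i$ dehomogenizes to $1$ on the chart $L$. We also need that the interior ideal is taken with the unshifted embedding $\widetilde{\H_Z}\subseteq \H_Z$, not the $\det(L)$-twisted one.

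Alternatively, one can argue geometrically. Here $\widetilde{\H_Z}$ corresponds to sections vanishing along the boundary divisors $\{y_i=0\}$ in the appropriate sense. The section $z_\emptyset=\prod_i y_i$ vanishes on every boundary component, and $z_\emptyset^2$ vanishes to order $2$. This serves as a sanity check, but I would present the inverse-system argument above.
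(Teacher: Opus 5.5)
Your proposal is correct and follows essentially the same route as the paper: you identify $z_\emptyset$ (resp.\ $z_\emptyset^2$) with the constant $1$ in $t$-degree $1$ (resp.\ $2$), and then show $1 \in (J^{\text{int}}_{mZ})^\perp$ because the cocircuit generators $v^{m(v)-1}$ all have positive degree. The only cosmetic difference is how the $m=2$ case is handled: the paper shows $M(2)$ has no coloops via the basis swap $(1,i)\sqcup S \mapsto (2,i)\sqcup S$, whereas you observe directly that cocircuit vectors of $L(2)$ are diagonal images whose supports have size $2m(v)\geq 2$.
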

\begin{proof}
  Under the map $\varphi_L$, $z_\emptyset$ is sent to the element of $\C[x_1,y_1,\ldots,x_n,y_n]/\widetilde{I}_L$ represented by $y_1y_2\cdots y_n$. This restricts to the function $1\in \Sym L^\vee$ on $L$. If $M$ contains no coloops, every cocircuit vector $v\in L$ has $|\text{Supp}(v)|>1$. Thus the homogeneous generators $v^{m(v)-1} \in \Sym L$ of $J_Z^{\text{int}}$ have positive degree. This ensures that $1\in (J_Z^{\text{int}})^\perp\subseteq \Sym L^\vee$. The claim now follows from \Cref{thrm:zon_orbit}.

  For any matroid $M$, the $2$-thickening of $M$ contains no coloops. To see this, note that if $(1,i) \sqcup S$ is a basis of $M(2)$ then $(2,i) \sqcup S$ is also a basis of $M(2)$. From the argument in the previous paragraph, it follows that $1\in J_{2 Z}^{\text{int}} \subseteq \Sym L(2)^\vee = \Sym L^\vee$. As $z_{\emptyset}$ restricts to $1\in \Sym L^\vee$, $z_\emptyset^2$ also restricts to $1\in \Sym L^\vee$. The claim now follows.
\end{proof}

Let $(\widetilde{\mathcal{H}_Z})_m$ denote the $t$-degree $m$ subspace of the interior ideal.
The following lemma will prove one direction of \Cref{thrm:gor}.
\begin{lemma}\label{lem:interior-dim}
  Let $m_0$ be the smallest index such that $\dim (\widetilde{\mathcal{H}_Z})_{m_0} >0$.
  \begin{itemize}
  \item If every connected component of $M$ is a circuit, then $m_0=1$ and $\dim (\widetilde{\mathcal{H}_Z})_1 =1$.
  \item If $M$ is Boolean, then $m_0=2$ and $\dim (\widetilde{\mathcal{H}_Z})_2 =1$.
  \item Otherwise, $\dim (\widetilde{\mathcal{H}_Z})_{m_0} >1$.
  \end{itemize}
\end{lemma}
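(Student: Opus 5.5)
The plan is to reduce everything to Tutte polynomial evaluations. Forgetting the $q$-grading, \Cref{defn:harm} gives $\dim(\widetilde{\mathcal{H}_Z})_m = \widetilde{i_Z}(m;1) = |\mathrm{int}(mZ)\cap\Z^d|$. By \Cref{thrm:stanley_ehrhart} and \Cref{eq:thickening_eq} this equals
\[
m^d\,T_M\!\left(\tfrac{m-1}{m},1\right)=T_{M(m)}(0,1).
\]
Since the Tutte polynomial is multiplicative over connected components, I will treat each component separately. I will use the basis-activity expansion $T_M(x,1)=\sum_{B} x^{\mathrm{ia}(B)}$ (equivalently the deletion/contraction recursion of \Cref{prop:tutte_def}), so every coefficient is nonnegative. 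Two facts drive the argument.
\begin{itemize}
\item A coloop contributes a factor $x$, so $T_M(0,1)=0$ iff $M$ has a coloop. If $M$ is coloop-free then $T_M(0,1)\ge 1$.
\item The lex-first basis has internal activity equal to the rank, so $T_M(x,1)$ contains the monomial $x^{d}$.
\end{itemize}

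\emph{Case $m_0=1$.} We have $\dim(\widetilde{\mathcal{H}_Z})_1=T_M(0,1)$, so $m_0=1$ exactly when $M$ has no coloops. If every component is a circuit, \Cref{lem:sum-tutte} gives $T_M(0,1)=\prod_i(1+0)=1$; a loop component has $T=y$ and also contributes $1$. Conversely, suppose $M$ is coloop-free and some connected component $N$ is not a circuit. I need $T_N(0,1)\ge 2$, and I would prove this by induction on $|N|$ using deletion/contraction. Pick $e\in N$. Then $N/e$ is still coloop-free, so $T_{N/e}(0,1)\ge1$. If $N\setminus e$ is also coloop-free, then $T_N(0,1)=T_{N\setminus e}(0,1)+T_{N/e}(0,1)\ge 2$.

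Otherwise every element of $N$ lies in a nontrivial series class. In that case I would perform series reduction: replacing a series class by a single element preserves $T(0,1)$. This reduction keeps the matroid connected and coloop-free, and turns a non-circuit into a non-circuit. Repeating it, we either reach a smaller non-circuit, where the first alternative applies, or we reach an element whose deletion is coloop-free. This step, exhibiting a ``second'' zero-activity basis for a connected non-circuit, is the main obstacle. If the series-reduction bookkeeping becomes messy, I would instead exhibit directly two bases of internal activity zero with respect to a well-chosen order: take a circuit $C\subsetneq N$ and an element outside $C$ placed first in the order.

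\emph{Case where $M$ has a coloop.} Here $m_0\ge 2$. Write $M=M_{\mathrm{col}}\oplus N$, where $M_{\mathrm{col}}$ is the Boolean part of rank $c\ge1$ and $N$ is coloop-free of rank $r$. At $m=2$ we get
\[
\dim(\widetilde{\mathcal{H}_Z})_2 = 2^d T_M(\tfrac12,1)=\bigl(2\cdot\tfrac12\bigr)^c\cdot 2^r T_N(\tfrac12,1)=2^rT_N(\tfrac12,1).
\]
This is positive, so $m_0=2$ (this also matches \Cref{lem:empty_contained}, since $z_\emptyset^2$ lies in the interior ideal). If $M$ is Boolean then $r=0$ and the dimension is $1$. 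If $r\ge1$, the expansion of $2^rT_N(\tfrac12,1)$ contains the term $2^r(\tfrac12)^r=1$ from the lex-first basis. Since $N$ is coloop-free, it also has at least one basis of internal activity $0$, which contributes $2^r\ge 2$. Hence $\dim(\widetilde{\mathcal{H}_Z})_2\ge 3>1$.

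Loops only contribute factors $y=1$, so they never affect these counts. The possible overlap, a matroid consisting only of loops, is both Boolean-free and all-circuit, and its $m_0=1$ count is consistent with the first bullet. This completes the case analysis of the three bullets.
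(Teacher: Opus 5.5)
Your proof is correct, but in the coloop-free case it takes a different route from the paper.

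The paper identifies $T_M(0,1)$ with the M\"obius invariant of $M^*$ and cites Bj\"orner's exercise for the whole trichotomy: the value is $0$ iff $M$ has a coloop, $1$ iff every component is a circuit, and larger otherwise. You prove the nontrivial half yourself, by strong induction on $|N|$ for a connected, coloop-free non-circuit $N$. Either some $N\setminus e$ is coloop-free, giving $T_N(0,1)=T_{N\setminus e}(0,1)+T_{N/e}(0,1)\geq 2$, or $e$ lies in a series pair $\{e,f\}$ and you pass to $N/e$.

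The facts you assert at that step are true, and each is a one-liner:
\begin{itemize}
\item $T_N(0,1)=T_{N/e}(0,1)$, since $f$ is a coloop of $N\setminus e$.
\item $N/e$ is connected by Tutte's theorem, because $N\setminus e$ is disconnected once $|N|\geq 3$.
\item Contraction creates no coloops.
\item A series extension of a circuit is a circuit, so $N/e$ is still a non-circuit.
\end{itemize}
Your sentence ``repeating it, we either reach \ldots\ or \ldots'' is muddled: the two alternatives coincide, and the argument is just this induction. The fallback via a well-chosen ordering is unnecessary. Your route is self-contained and avoids M\"obius functions of lattices of flats; the paper's is shorter.

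In the coloop case the two arguments are essentially the same, but the paper's is slicker. $2^dT_M(\frac{1}{2},1)$ is a sum of $T_M(1,1)$ positive integers, so it equals $1$ exactly when $M$ has a single basis. This needs neither the lex-first basis nor a zero-activity basis, although your version gives the sharper bound $\geq 3$.

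One caveat, shared with the paper. Suppose $M$ consists of coloops together with at least one loop. Then $r=0$ and $\dim(\widetilde{\mathcal{H}_Z})_2=1$, although $M$ is neither Boolean nor a direct sum of circuits. Your split ``if $M$ is Boolean then $r=0$; if $r\geq 1$ \ldots'' skips exactly this case. Likewise, the paper's condition ``$T_M(x,y)=x^d$'' should really be $T_M(x,1)=x^d$. So ``Boolean'' has to be read as ``every element is a loop or a coloop'', or $M$ must be assumed loopless.
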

\begin{proof}
  We prove all three items simultaneously.
  By definition, we have that $m_0\geq 1$. The dimension of $(\widetilde{\mathcal{H}_Z})_1$ is equal to $T_M(0,1)$. The value $T_M(0,1)$ is equal to the M\"obius invariant of $M^*$, the dual matroid of $M$. The M\"obius invariant of $M^*$ is equal to zero if and only if $M$ has a coloop.

  First, assume that $M$ does not have a coloop. By \cite[Exercise 7.8.7]{bjornerap}, the M\"obius invariant of $M^*$ is equal to one if and only if every connected component of $M$ is a circuit and is strictly larger than one otherwise. Thus our claim holds in the case where $M$ does not have a coloop.

  Now suppose $M$ has a coloop. In this case, $\dim(\widetilde{\mathcal{H}_Z})_1 = 0$. By the definition of the interior ideal (\Cref{defn:harm}) and \Cref{thrm:stanley_ehrhart},
  \begin{equation}\label{eq:second_dilate}
      \dim(\widetilde{\mathcal{H}_Z})_2 = \vert 2Z \cap \Z^d \vert = 2^d T_M(\frac{1}{2},1).
  \end{equation}
  As the Tutte polynomial has non-negative integer coefficients and is of $x$-degree at most $d$ (\Cref{prop:tutte_def}), the right hand side of \Cref{eq:second_dilate} is a non-empty sum of non-negative powers of $2$. Such a sum is always positive and is equal to $1$ if and only if $T_M(x,y) = x^d$. This occurs exactly when $M$ is the Boolean matroid.
\end{proof}
With these lemmas in place, we can now prove \Cref{thrm:gor}.
\begin{proof}[Proof of \Cref{thrm:gor}]
  First, suppose $M$ is not the Boolean matroid nor a direct sum of circuits. Let $m_0$ be the smallest index such that $(\widetilde{\mathcal{H}}_Z)_{m_0} \neq 0$. \Cref{lem:interior-dim} tells us that $\dim(\widetilde{\mathcal{H}}_Z)_{m_0} >1$. However $\dim(\mathcal{H}_Z)_0 =1$, so we cannot have a graded module isomorphism $\mathcal{H}_Z \simeq \widetilde{\mathcal{H}_Z}$, even with a degree shift.

 Let $M$ be a matroid with $k$ connected components such that each connected component is also a circuit of $M$. Let $d_1,d_2,\ldots , d_k$ be the ranks of these respective components.  In this case, we claim that $\widetilde{\mathcal{H}_Z}\simeq \langle z_{\emptyset}\rangle$. By \Cref{lem:empty_contained}, $\langle z_{\emptyset}\rangle \subseteq \widetilde{\mathcal{H}_Z}$. We now do a dimension check to show that $\langle z_{\emptyset}\rangle = \widetilde{\mathcal{H}_Z}$.

  As $\mathcal{H}_Z$ is an integral domain (\Cref{lem:domain}), $\langle z_{\emptyset} \rangle_{m+1} \simeq (\mathcal{H}_Z)_m$. Therefore, by \Cref{thrm:stanley_ehrhart}, 
  \[\dim \langle z_{\emptyset} \rangle_{m+1}  = \dim (\mathcal{H}_Z)_m=  m^dT_M(\frac{m+1}{m},1).\]
  We also know that
  \[\dim(\widetilde{\mathcal{H}_Z})_{m+1} = (m+1)^dT_M(\frac{m}{m+1},1). \]
  By \Cref{lem:sum-tutte},$ T_M(x,1)= \prod_{i=1}^k (1+x+x^2+\ldots+x^{d_i})$. Therefore,
  \begin{align*}
    m^dT_M(\frac{m+1}{m},1) &= \prod_{i=1}^km^{d_i}\left(1+\frac{m+1}{m}+\ldots + \left(\frac{m+1}{m}\right)^{d_i}\right)\\
                            &= \prod_{i=1}^k \left(m^{d_i}+m^{d_i-1}(m+1)+\ldots + m(m+1)^{d_i-1}+(m+1)^{d_i}\right)\\
                            &=\prod_{i=1}^k(m+1)^{d_i}\left(1+\frac{m}{m+1}+\ldots + \left(\frac{m}{m+1}\right)^{d_i}\right)\\
    &= (m+1)^dT_M(\frac{m}{m+1},1)\,.
  \end{align*}
  Thus, in this case, we've shown that
  \[\mathcal{H}_Z[1,d]\simeq \widetilde{\mathcal{H}_Z}[0,d] \simeq \Omega \mathcal{H}_Z \]
  and that $\mathcal{H}_Z$ is Gorenstein.

  If $M$ is the Boolean matroid, then we claim that $\widetilde{\mathcal{H}_Z} = \langle z_{\emptyset}^2 \rangle$. We proceed in a manner that is similar to the previous case. By \Cref{lem:empty_contained}, $\langle z_{\emptyset}^2\rangle$ is contained in the interior ideal. As $\mathcal{H}_Z$ is an integral domain (\Cref{lem:domain}), $\langle z_{\emptyset}^2 \rangle_{m+2} \simeq (\mathcal{H}_Z)_m$. Therefore, by \Cref{thrm:stanley_ehrhart}, 
  \[\dim \langle z_{\emptyset}^2 \rangle_{m+2}  = \dim (\mathcal{H}_Z)_m = m^dT_M(\frac{m+1}{m},1) = (m+1)^d.\]
  We also know that
  \[\dim(\widetilde{\mathcal{H}_Z})_{m+2} = (m+2)^dT_M(\frac{m+1}{m+2},1)= (m+1)^d.\]
  Thus, in this case, we've shown that
  \[\mathcal{H}_Z[2,d]\simeq \widetilde{\mathcal{H}_Z}[0,d] \simeq \Omega \mathcal{H}_Z \]
  and that $\mathcal{H}_Z$ is Gorenstein.
\end{proof}
Recall that \Cref{thrm:ehrhart_series} lets us write the graded Ehrhart series of $Z$ as a rational function
\[E_Z(t,q)= \frac{N_Z(t,q)}{\prod_{i=0}^n (1-tq^i)} \]
where $N_Z(t,q)\in \mathbb{Z}[t,q]$ is of $t$-degree at most $n$. Write $N_Z(t,q) = \sum_{k=0}^n g_k(q) t^k$ where $g_k(q)\in \mathbb{Z}[q]$. If the harmonic algebra of $Z$ is Gorenstein, the coefficients $g_k(q)$ and $g_{n-k}(q)$ are related by a quantum symmetry. This symmetry is evocative of Hibi's palindromicity theorem which states that reflexive polytopes have symmetric $h^*$ vectors \cite{hibi}.

\begin{prop}\label{prop:palin}
  Suppose that $Z$ has a Gorenstein harmonic algebra. If $M$ is Boolean, then for all $k$,
  \[g_k(q)= q^{\binom{n}{2}} g_{n-k-1}(q^{-1}).  \]
  If $M$ is not Boolean, then for all $k$,
  \[g_k(q)= (-1)^{n+d}q^{\binom{n+1}{2}-d} g_{n-k}(q^{-1}). \]
\end{prop}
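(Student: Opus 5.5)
The plan is to turn the Gorenstein property into a functional equation $\widetilde{E_Z}(t,q) = t^{c}\,E_Z(t,q)$ for an explicit shift $c$. I then combine it with the reciprocity law of \Cref{thrm:ehrhart_series} and compare $t$-coefficients of the numerators. By \Cref{thrm:gor}, a Gorenstein $\H_Z$ forces either $M$ Boolean or every connected component of $M$ to be a circuit. The proof of \Cref{thrm:gor} identifies the interior ideal in each case as $\widetilde{\H_Z} = \langle z_\emptyset\rangle$ and $\widetilde{\H_Z} = \langle z_\emptyset^2\rangle$ respectively. Since $z_\emptyset$ has bidegree $(1,0)$ and $\H_Z$ is a domain (\Cref{lem:domain}), multiplication by $z_\emptyset^c$ is a bigraded isomorphism $\H_Z \to \widetilde{\H_Z}$ shifting only the $t$-degree by $c$. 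Here $c=1$ in the circuit case and $c=2$ in the Boolean case. Taking bigraded Hilbert series gives $\widetilde{E_Z}(t,q) = t^c E_Z(t,q)$. The one point to check is that no $q$-shift occurs, i.e.\ that $z_\emptyset$ really has $q$-degree $0$, which it does.

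Next I substitute $E_Z = N_Z/\prod_{i=0}^n(1-tq^i)$ into $q^d\widetilde{E_Z}(t,q) = (-1)^{d+1}E_Z(t^{-1},q^{-1})$. The key routine computation is
\[\prod_{i=0}^n (1-t^{-1}q^{-i}) = (-1)^{n+1}t^{-(n+1)}q^{-\binom{n+1}{2}}\prod_{i=0}^n(1-tq^i),\]
so $E_Z(t^{-1},q^{-1}) = (-1)^{n+1}t^{n+1}q^{\binom{n+1}{2}}N_Z(t^{-1},q^{-1})/\prod_{i=0}^n(1-tq^i)$. Clearing the common denominator yields the polynomial identity
\[q^d t^c N_Z(t,q) = (-1)^{n+d}\, t^{n+1} q^{\binom{n+1}{2}} N_Z(t^{-1},q^{-1}).\]

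Finally I compare coefficients of $t^{k+c}$, using $N_Z = \sum_{k=0}^n g_k(q)t^k$. The left side contributes $q^d g_k(q)$, and the right side contributes $(-1)^{n+d}q^{\binom{n+1}{2}}g_{n+1-c-k}(q^{-1})$. For $c=1$ this is exactly the non-Boolean claim $g_k(q) = (-1)^{n+d}q^{\binom{n+1}{2}-d}g_{n-k}(q^{-1})$. For $c=2$ with $M$ Boolean we have $n=d$, so the sign is $+1$ and $\binom{n+1}{2}-n = \binom{n}{2}$, giving $g_k(q) = q^{\binom n2}g_{n-k-1}(q^{-1})$. Indices outside $[0,n]$ are interpreted as zero coefficients; in the Boolean case this also shows $g_n = 0$, which is consistent with the identity.

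I expect no serious obstacle. The only delicate point is bookkeeping: the reciprocity in \Cref{thrm:ehrhart_series} must be used with the same denominator $\prod_{i=0}^n(1-tq^i)$ for both series, and the signs and powers of $q$ coming from inverting that denominator must be tracked correctly.
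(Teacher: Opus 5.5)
Your proposal is correct and follows essentially the same route as the paper. You use the identification $\widetilde{\mathcal{H}_Z}=\langle z_\emptyset^c\rangle$ ($c=1$ in the circuit case, $c=2$ in the Boolean case) from the proof of \Cref{thrm:gor} to get $\widetilde{E_Z}(t,q)=t^cE_Z(t,q)$, substitute into the reciprocity law of \Cref{thrm:ehrhart_series}, invert $\prod_{i=0}^n(1-tq^i)$, and read off $t$-coefficients, and your signs and powers of $q$ check out in both cases.
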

\begin{proof}
  If $M$ is not Boolean, then the proof of \Cref{thrm:gor} tells us that $\mathcal{H}_Z[1,0]\simeq \widetilde{\mathcal{H}_Z}$. Thus by \Cref{thrm:ehrhart_series},
  \[E_Z(t,q) = t^{-1} \widetilde{E_Z}(t,q) = (-1)^{d+1} t^{-1}q^{-d}E_Z(t^{-1},q^{-1}).\]
  We can compute that
  \begin{align*}(-1)^{d+1} t^{-1}q^{-d}E_Z(t^{-1},q^{-1}) &= (-1)^{d+1} t^{-1}q^{-d}\frac{\sum_{k=0}^n g_k(q^{-1})t^{-k}}{\prod_{i=0}^n (1-t^{-1}q^{-i})}\\ &= \frac{\sum_{k=0}^n (-1)^{n+d}q^{\binom{n+1}{2}-d} g_{k}(q^{-1})t^{n-k}}{\prod_{i=0}^n (1-tq^{i})}\,. \end{align*}
  This yields the second claim. The first claim follows from an identical argument and the fact that $\mathcal{H}_Z[2,0] \simeq \widetilde{\mathcal{H}_Z}$ when $M$ is Boolean.
\end{proof}

\begin{remark}\label{rem:euler-mahonian}
    In fact, a complete description of the numerator of the graded Ehrhart series of an $n$-cube $C_n$ is given in \cite[Section 4]{BB13} and \cite[Theorem 1.1]{BMC18}. Although, as those authors note, one could also derive it from the works of Carlitz and MacMahon using the equality $\Hilb(\text{Orb}(mC_n \cap \Z^n);q)1= [m+1]_q^n$; see the references in \cite{BB13} for more details. The polynomial $N_{C_n}(t,q)$ is an Euler--Mahonian distribution and is defined as follows. For a permutation $\pi=\pi_1\pi_2\cdots\pi_n$  of $S_n$, let $\text{Des}(\pi)= \{j\in [n-1]: \pi_{j}> \pi_{j+1}\}$, $\text{des}(\pi)= \vert \text{Des}(\pi)\vert$ and $\text{maj}(\pi) = \sum_{j\in \text{Des}(\pi)} j$. Then
    \[N_{C_n}(t,q) = \sum_{\pi\in S_n} t^{\text{des}(\pi)}q^{\text{maj}(\pi)} .\]
    The first identity given in \Cref{prop:palin} is equivalent to the identity
    \[\sum_{\substack{\pi\in S_n\\\text{des}(\pi)=k}}q^{\text{maj}(\pi)} = \sum_{\substack{\pi\in S_n\\\text{des}(\pi)=n-k-1}}q^{\binom{n}{2}-\text{maj}(\pi)}.\]
\end{remark}

\begin{remark}
A similar argument to the proof of \Cref{thrm:gor} shows that the affine semigroup ring of a unimodular zonotope $Z$ is Gorenstein if and only if $M$ is Boolean or every connected component of $M$ is a circuit.
\end{remark}

\begin{remark}
\Cref{prop:palin} gives us partial information about the numerator of the graded Ehrhart series of Gorenstein unimodular zonotopes. As $i_Z(0;q)=1$, it can be checked that $g_0(q)=1$ for all unimodular zonotopes $Z$. If $M$ is Boolean, \Cref{prop:palin} implies $N_Z(t,q)$ has $t$-degree $n-1$ with $g_{n-1}(q)= q^{\binom{n}{2}}$. If the harmonic algebra of $Z$ is Gorenstein and $M$ is not Boolean, then \Cref{prop:palin} implies that $N_Z(t,q)$ has $t$-degree $n$ with $g_{n}(q)= (-1)^{n+d}q^{\binom{n+1}{2}-d}$.
\end{remark}

\section{Further questions}\label{sec:questions}
Here we collect some questions for future work.
\subsection{A matroidal formula for the graded $h^*$ vector}
Given a matroid $M$ of rank $d$ on a ground set of size $n$, define $h^*(t)\in \Z[t]$ and $h^*_q(t;q)\in \Z[t,q]$ by declaring
\begin{gather*}
\sum_{m\geq 0} m^d T_M(\frac{m+1}{m},1)t^m = \frac{h^*(t)}{(1-t)^{d+1}} \quad \text{and}\\ \sum_{m\geq 0} q^{(n-d)m}[m]_q^d T_M(\frac{[m+1]_q}{[m]_q},q^{-m})t^m = \frac{h^*_q(t;q)}{(1-t)(1-tq)\cdots (1-tq^n)}\,. \end{gather*}
\Cref{lem:formal_rationality} implies that $h^*_q(t;q)$ is well defined and that $h^*_q(t;1) = (1-t)^{n-d}h^*(t)$. 

If $M$ is the matroid of a unimodular zonotope $Z$, $h^*(t)$ and $h^*_q(t;q)$ are the numerators of the Ehrhart series and graded Ehrhart series of $Z$, respectively. In this case, \cite[Corollary 5.10]{BJM19} gives an expression for $h^*(t)$ as a sum of $j$-Eulerian polynomials. This sum is indexed by the bases of $M$ and relies on the activities of these bases as input. In light of \Cref{rem:euler-mahonian}, it seems possible that $h^*(t;q)$ might have a similar expression as a weighted sum of $q$-analogues of the $j$-Eulerian polynomials. 

\begin{problem}
    Give an explicit matroidal formula for $h^*_q(t;q)$.
\end{problem}

\subsection{The graded Ehrhart theory of non-unimodular zonotopes}
If $Z$ is an arbitrary zonotope, the Ehrhart theory of $Z$ is dictated by the \emph{arithmetic matroid} $\mathfrak{M}$ of $Z$. The arithmetic matroid $\mathfrak{M}$ associated to a zonotope is the data of a matroid $M$ along with a multiplicity function $m: 2^E\to \Z$ which has the property that for a basis $B$ of $M$, $m(B)$ is equal to the determinant of the submatrix indexed by $B$. Every arithmetic matroid has an arithmetic Tutte polynomial $T_{\mathfrak{M}}(x,y)$. In the case of a unimodular zonotope $Z$, the arithmetic Tutte polynomial of the arithmetic matroid of $Z$ is equal to ordinary Tutte polynomial of the matroid associated to $Z$. In \cite{DAM12}, it is shown that 
\[ \vert mZ \cap \Z^d \vert  =  m^d 
    T_{\mathfrak{M}}(\frac{m+1}{m},1)\,\,\,\text{and}\,\,\,
    \vert \text{int}(mZ) \cap \Z^d \vert = m^d 
    T_{\mathfrak{M}}(\frac{m-1}{m},1)\]
for all zonotopes $Z$. This expression generalizes the one given in \Cref{prop:ehrhart_poly} outside of the realm of unimodular zonotopes. One could ask whether our \Cref{thrm:ehrhart_series_intro} generalizes in a similar fashion. As the following example shows, this is not true, at least in its most obvious form.

\begin{example}
Let $Z\subseteq \R^2$ be the zonotopal diamond which is the image of $[0,1]^2$ under the linear map given by $A= \begin{bsmallmatrix}
    1 & 1\\
    -1 & 1
\end{bsmallmatrix}$. This zonotope is not unimodular as $\det(A) = 2$. The arithmetic Tutte polynomial associated to $Z$ is $T_{\mathfrak{M}}(x,y) = x^2+1$. Through direct calculation we see
\[i_Z(1;q)= 1+2q+2q^2 \neq 2+2q+q^2=T_{\mathfrak{M}}([2]_q,1). \]
\end{example}

Despite this example, it is still possible that the $q$-lattice point counts of an arbitrary zonotope are an arithmetic matroid invariant. It would be quite surprising to find two zonotopes $Z$ and $Z'$ whose arithmetic matroids are the same but $i_Z(m;q)\neq i_{Z'}(m;q)$ for some integer $m\geq 0$.
\begin{question}
    Is the graded Ehrhart theory of a non-unimodular zonotope $Z$ an invariant of its associated arithmetic matroid? If so, is there a generalization of \Cref{prop:ehrhart_poly} to arithmetic matroids?
\end{question}

\subsection{Homological aspects of $\mathcal{H}_Z$}
The harmonic algebras $\mathcal{H}_Z\simeq \mathbb{C}[z_S\vert S\subseteq [n]]/I_{L}^{\text{SE}}$ warrant further study. In particular, it would be interesting to give a bigraded free resolution of $\mathcal{H}_Z$ as a $\mathbb{C}[z_S\vert S\subseteq [n]]$ module where $\deg(z_S)= (1,|S|)$. 
Already for the case of a cube, this question is interesting. In this case, $\mathcal{H}_Z$ is the coordinate ring of the Segre embedding of $(\P^1)^n \subseteq \P^{2^n-1}$. 
In a seminal paper \cite{Snowden}, Snowden showed that there is a finite list of ``master syzygies'' which control all syzygies of Segre embeddings. 

Let $R=\mathbb{C}[x_0,x_1,\ldots,x_n]$ be a bigraded polynomial ring where $x_i$ has degree $(1,i)$. Consider the map $R\to \mathcal{H}_Z$ defined by sending $x_i\mapsto \sum_{|S|=i} z_S$. Braun and Olsen prove that if $Z$ is a cube, then $\mathcal{H}_Z$ is a finitely generated and free $R$-module \cite[Proof of Theorem 4.10]{BMC18}. As every harmonic algebra of a unimodular zonotope is a quotient of the harmonic algebra of a cube, this implies that every harmonic algebra is a finitely generated $R$-module. Hilbert's syzygy theorem ensures that $\mathcal{H}_Z$ has a finite free resolution as an $R$-module. In fact, as $\mathcal{H}_Z$ is Cohen--Macaulay (\Cref{thrm:CM-intro}), the Auslander--Buchsbaum formula (\cite[Chapter 19]{Eisenbud}) ensures that this free resolution will have length $n+1-(d+1)=n-d$. By describing the bigraded Betti numbers of such a resolution, one will obtain a description of the numerator of the graded Ehrhart series of $Z$ in \Cref{thrm:ehrhart_series}. Our partial descriptions of the numerator of the graded Ehrhart series gives some information on the shape of these Betti numbers. See \Cref{thrm:ehrhart_series} and \Cref{prop:palin}.

\begin{problem}\label{prob:hom}
    Give a free resolution of $\mathcal{H}_Z$ as a $\C[z_S\vert S\subseteq [n]]$-module or as an $R$-module. 
\end{problem}

As we've given a presentation for the homogeneous coordinate ring of any complex arrangement Schubert variety $Y_L\subseteq \P^{2^n-1}$ under the Segre embedding, one could ask the same questions as \Cref{prob:hom} for these coordinate rings. By \Cref{thrm:geometry}, such coordinate rings are isomorphic to a harmonic algebra $\mathcal{H}_Z$ when $L$ is unimodular.

\printbibliography
\end{document}